\documentclass[11pt,reqno]{amsproc}
\usepackage[letterpaper,margin=1in]{geometry}
\usepackage{amsmath,amssymb,amsfonts,amsthm,mathrsfs}
\usepackage{enumitem}
\usepackage{tikz}
\usepackage{hyperref}
\hypersetup{colorlinks=true,linkcolor=blue,citecolor=magenta,urlcolor=teal}
\allowdisplaybreaks
\numberwithin{equation}{section}
\newtheorem{theorem}{Theorem}[section]
\newtheorem{proposition}[theorem]{Proposition}
\newtheorem{lemma}[theorem]{Lemma}
\newtheorem{corollary}[theorem]{Corollary}
\newtheorem{claim}[theorem]{Claim}
\theoremstyle{remark}
\newtheorem{remark}[theorem]{Remark}
\newcommand{\p}{\partial}
\newcommand{\RR}{\mathbb R}
\newcommand{\norm}[1]{\left\|#1\right\|}
\newcommand{\abs}[1]{\left|#1\right|}
\newcommand{\eps}{\varepsilon}
\newcommand{\les}{\lesssim}
\newcommand{\curl}{\operatorname{curl}}
\renewcommand{\div}{\operatorname{div}}

\newcommand{\esssup}{\operatorname*{ess\,sup}}
\newcommand{\Caxi}{\mathsf{C}_{\mathrm{axi}}}

\title[Asymptotically axisymmetric Navier-Stokes with analytic forcing]{Regularity of asymptotically axisymmetric solutions to the 3D Navier-Stokes equations with analytic forcing}

\author{Peter Constantin} 
\address{Department of Mathematics, Princeton University, Princeton, NJ 08540}
\email{const@math.princeton.edu}
\urladdr{https://web.math.princeton.edu/~const/}

\author{Mihaela Ignatova}
\address{Department of Mathematics, Temple University, Philadelphia, PA 19122}
\email{ignatova@temple.edu}
\urladdr{https://sites.temple.edu/ignatova/}

\author{Vlad Vicol}
\address{Department of Mathematics, Courant Institute, New York University, New York, NY, 10012}
\email{vicol@cims.nyu.edu}
\urladdr{https://cims.nyu.edu/~vicol/}

\subjclass[2020]{Primary 35Q30; Secondary 35B44, 35Q31}
\keywords{Navier-Stokes equations, local regularity, axial symmetry, analytic forcing.}
\date{}

\begin{document}
\begin{abstract}
OpenAI~\cite{OpenAIManuscript} has recently announced a proof of finite time singularity formation for the 3D Navier-Stokes equations, in the presence of a \emph{$C^\infty$-smooth body force}. The construction in~\cite{OpenAIManuscript} has a few key features, among which we single out: (i) the angular mean of the solution satisfies specific Type II bounds which are anisotropic; (ii) the solution is exactly axisymmetric in a collapsing core region. In this paper we consider solutions of the 3D Navier-Stokes equations in the presence of a \emph{real-analytic body force}, and assume that these solutions satisfy properties (i) and (ii) above. We prove that such solutions are in fact regular at the putative singular point. As a consequence, in the construction of~\cite{OpenAIManuscript}, and in any construction with properties (i) and (ii) whose force remains bounded in $C^2$ up to the singular time, the force \emph{can neither vanish} identically near the singular point, \emph{nor be real analytic} in the space variables, locally uniformly in time. The main idea of the proof is inspired by our earlier work~\cite{CIV26} and the companion paper~\cite{CIVEulerLength}: zooming-in at the putative singularity using the anisotropic length scales provided by the a priori bounds, we arrive at ancient limits whose PDE evolution imposes additional rigidity.
\end{abstract}
\maketitle

\section{Introduction}
\label{sec:aniso:introduction}

In a recent manuscript~\cite{OpenAIManuscript}, OpenAI has announced a proof of finite time singularity formation for the 3D Navier-Stokes equations in the presence of a $C^\infty$-smooth forcing. The main theorem of the manuscript~\cite[Theorem~1.1]{OpenAIManuscript} asserts that for every viscosity $\nu>0$ there exists a force $f\in C^\infty_c(\RR^3\times(0,\infty))$ and a smooth solution $(u,\pi)$ of the 3D Navier-Stokes equations on $\RR^3\times[0,1)$
\begin{align*}
 \p_tu+(u\cdot\nabla)u-\nu\Delta u+\nabla\pi&=f \,,\\
 \div u&=0 \,, \\
 u(\cdot,0)&=0 \,,
\end{align*}
which has a fixed compact spatial support, finite kinetic energy $\sup_{0\le t<1}\|u(\cdot,t)\|_{L^2}<\infty$, and the solution blows up as $t\uparrow 1$:
\[
\limsup_{t\uparrow1}\norm{u(\cdot,t)}_{L^\infty}=\infty
\,.
\]
We refer to the solution built in~\cite{OpenAIManuscript} as the \emph{OpenAI construction}. The singular point is $(x,t)=(0,1)$, and it is isolated: the velocity diverges along points converging to the origin~\cite[equations~(10.20)--(10.21)]{OpenAIManuscript}, while every space-time derivative of the velocity has a one-sided limit at the blowup time, on compact sets avoiding the origin; see~\cite[Theorem~3.1(ii), equation~(3.5), and the proof of Lemma~10.2]{OpenAIManuscript}. The force $f$, in turn, is defined as the residual of the momentum equation, left over after the construction's ansatz, and it is $C^\infty$ smooth but not real analytic: neither on $\RR^3$, because it is compactly supported, nor in the vicinity of the singular point (see Section~\ref{sec:properties:criteria}). In Appendix~\ref{app:scales} we record those properties of the OpenAI construction which are relevant to this paper; they are deduced from statements of~\cite{OpenAIManuscript}, which we cite with their locations.

Of the many features of the solution constructed in~\cite{OpenAIManuscript}, we single out the two which enter the hypotheses of our main theorem:
\begin{itemize}[leftmargin=2em]
\item[(i)] Near the singular point the angular mean $v=\mathcal Pu$ of the velocity, defined in~\eqref{eq:interior:average} below, satisfies Type~II bounds which are anisotropic: its radial component $v_r$ is bounded by $(1-t)^{-1/2}$, while its axial and azimuthal components $v_z$ and $v_\theta$ are bounded by $(1-t)^{-1/2-h}$ for some $0<h<1/100$. Moreover, each $r$-derivative costs a factor of $(1-t)^{-1/2}$, while each $z$-derivative costs only a factor of $(1-t)^{-1/2+h}$.
\item[(ii)] The full velocity field $u$ is exactly axisymmetric on a collapsing core region $B(c\sqrt{1-t})$, for some $c>0$ fixed.
\end{itemize}
We do not claim to have verified the correctness of the construction in~\cite{OpenAIManuscript}: Appendix~\ref{app:scales} records the statements of that manuscript on which we rely, with their locations, and deduces from them the properties used in this paper.

\subsection{Problem setup}
\label{sec:aniso:setup}

The solution constructed in~\cite{OpenAIManuscript} is defined for $t\in[0,1)$ and becomes singular as $t\uparrow1$. Since the equations are invariant under translations in time, we place the singular time at $t=0$, which is more convenient: we replace the triple $(u,\pi,f)$ of the manuscript~\cite{OpenAIManuscript} by the shifted functions
\begin{align}
(u,\pi,f)(x,t)\longmapsto(u,\pi,f)(x,t+1)
\,,\qquad -1\le t<0
\,.
\label{eq:time:shift}
\end{align}
We denote the shifted triple again by $(u,\pi,f)$. The singular point is then $(x,t)=(0,0)$, and the time to the singularity is $-t$, replacing the $1-t$ in the time variable of~\cite{OpenAIManuscript}. In particular, the factor $(1-t)$ of item~(i) becomes $(-t)$, and the core of item~(ii) becomes $B(c\sqrt{-t})$. From here on, every statement of this paper is in the shifted time coordinate of~\eqref{eq:time:shift}. 

The goal of this paper is to analyze solutions of the 3D Navier-Stokes equations which are \emph{a priori} assumed to obey properties (i) and (ii) above. We prove that if such solutions were to arise from the forced problem with a body force which is \emph{real analytic} in the space variables, in the sense made precise in Theorem~\ref{thm:main} below, then $(x,t)=(0,0)$ is a regular point.\footnote{Recall that a point $(x_0,t_0)$ with $x_0\in B(1)$ and $-1<t_0\le0$ is called \emph{regular} if $\esssup_{B(x_0,\rho)\times(t_0-\rho^2,t_0)}|u|<\infty$ for some $\rho>0$ with $B(x_0,\rho)\times(t_0-\rho^2,t_0)\subset B(0,1)\times(-1,0)$, and \emph{singular} otherwise. Here and throughout this paper, we use the notation $B(x,\rho)=\{y\in\RR^3:|y-x|<\rho\}$ and $B(\rho)=B(0,\rho)$.} 

Throughout the paper we consider suitable weak solutions, in the sense of Caffarelli, Kohn, and Nirenberg~\cite{CKN}, of the forced 3D Navier-Stokes equations
\begin{subequations}
\label{eq:nse:forced}
\begin{align}
\p_tu+(u\cdot\nabla)u-\Delta u+\nabla\pi=f\,,
\\
\div u=0\,,
\end{align}
\end{subequations}
where $u\colon\mathcal Q\to\RR^3$ is the velocity, $\pi\colon\mathcal Q\to\RR$ is the pressure, and $f\colon\mathcal Q\to\RR^3$ is the force.\footnote{\label{foot:aniso:pressure}The partial regularity results of~\cite{CKN} are stated for a divergence-free force. We apply them to $(u,\pi-\pi_f)$ with the force $f-\nabla\pi_f$, where $\Delta\pi_f=\div f$ on $B(1)$ with zero Dirichlet data. The function $\nabla\pi_f$ is bounded on $B(1)$ uniformly in time, so that $f-\nabla\pi_f$ is bounded and divergence free, and the local energy inequality is unchanged, since $\int u\cdot\nabla(\pi_f\varphi)\,dx=0$ for $\varphi\in C^\infty_0(B(1))$. Since $\curl(f-\nabla\pi_f)=\curl f$, and since $\pi_f$ is axisymmetric whenever $f$ is, so that $(f-\nabla\pi_f)_\theta=f_\theta$, the derivative estimates, the circulation equation, and the vorticity equations of Sections~\ref{sec:aniso:identities}--\ref{sec:aniso:closure}, which are where the $C^2$ bound~\eqref{eq:interior:force:c-two} on the force enters, are written with $f$ itself.} Here $\mathcal Q:=B(1)\times(-1,0)$ is the unit parabolic cylinder,\footnote{The properties of the construction in~\cite{OpenAIManuscript} are supplied on a cylinder $B(R)\times(-\delta,0)$ for some $R,\delta>0$; see Appendix~\ref{app:scales}. Upon decreasing $R$ or $\delta$ we may take $\delta=R^2\le1$, and the scaling $u\mapsto Ru(R\,\cdot,R^2\,\cdot)$, $\pi\mapsto R^2\pi(R\,\cdot,R^2\,\cdot)$, $f\mapsto R^3f(R\,\cdot,R^2\,\cdot)$ carries the hypotheses of Theorem~\ref{thm:main} from $B(R)\times(-R^2,0)$ to the unit parabolic cylinder $\mathcal Q$, with constants changed by fixed powers of $R$.} whose upper time endpoint $t=0$ is the blowup time of the construction.

Unless stated otherwise, we take $(u,\pi)$ to be a suitable weak solution of~\eqref{eq:nse:forced} which is \emph{smooth on compact subsets} of $\mathcal Q$, and hence $f$ is smooth there as well. Note that the definition of a suitable weak solution on $\mathcal Q$ in the sense of~\cite{CKN}, as written in~\cite{Lin98,LadyzhenskayaSeregin99}, includes the integrability requirement
\begin{align}
u\in L^\infty_tL^2_x(\mathcal Q)\cap L^2_tH^1_x(\mathcal Q)
\,,\qquad
\pi\in L^{3/2}_{x,t}(\mathcal Q)
\,.
\label{eq:interior:energy:class}
\end{align}
We assume no regularity of the solution $(u,\pi)$ at time $t=0$.

\subsection{Notation}
\label{sec:aniso:notation}

The solutions of~\eqref{eq:nse:forced} need not be axisymmetric, but it is convenient to introduce the notation attached to the cylindrical coordinate system on $\RR^3$. For $\varphi\in[0,2\pi)$, denote by $Q_\varphi$ the rotation through angle $\varphi$ about the $z$-axis. For a solution $u\colon\mathcal Q\to\RR^3$ of~\eqref{eq:nse:forced} we define the rotation and the projection operators\footnote{\label{foot:angular:mean}Since rotations about the $z$-axis preserve $B(1)$, the angular mean projection operator $\mathcal P$ is well defined on $\mathcal Q$.}
\begin{subequations}
\label{eq:interior:average}
\begin{align}
(\mathscr R_\varphi u)(x,t)
&=Q_\varphi u(Q_\varphi^{-1}x,t)\,,
\qquad
\mathcal Pu=\frac1{2\pi}\int_0^{2\pi}\mathscr R_\varphi u\,d\varphi\,,
\end{align}
and denote
\begin{align}
v=\mathcal Pu\,,\qquad w=u-v\,,\qquad \mathcal Pw=0
\,.
\end{align}
\end{subequations}
We call $v$ the \emph{angular mean}, or the \emph{axisymmetric part}, of $u$, and we refer to $w$ as the \emph{non-axisymmetric part}. The field $u$ is \emph{axisymmetric} if $w=0$. With $x=(x',z)$, $r=|x'|$, and the usual cylindrical frame $(e_r,e_\theta,e_z)$, an axisymmetric field has the form $u=u_re_r+u_\theta e_\theta+u_ze_z$, where $u_r,u_\theta,u_z$ depend only on $(r,z,t)$. For a general field $u$, the cylindrical coefficients $u_r,u_\theta,u_z$ depend on $\theta$ as well, and the cylindrical coefficients of its angular mean $v=\mathcal Pu$ are their averages in $\theta$, that is, $v_\circ=\frac1{2\pi}\int_0^{2\pi}u_\circ\,d\theta$ for $\circ \in\{r,\theta,z\}$.

\subsection{Main result}
\label{sec:aniso:main}
Our main result is the following.
\begin{theorem}[{\bf Regularity under anisotropic Type~II bounds and analytic forcing}]
\label{thm:main}
Let $(u,\pi)$ be a suitable weak solution of~\eqref{eq:nse:forced} on $\mathcal Q$, smooth on compact subsets of $\mathcal Q$, and let $0<h<1/2$. Assume that the force is bounded in $C^2$ up to the blowup time,
\begin{align}
M_f:=\sup_{-1<t<0}\norm{f(\cdot,t)}_{C_x^2(B(1))}<\infty
\,.
\label{eq:interior:force:c-two}
\end{align}
Assume that the force is real analytic in the space variables, locally uniformly on cylinders compactly contained in $\mathcal Q$: for every $R<1$ and every compact interval $J\subset(-1,0)$ there are $M_{R,J}<\infty$ and $a_{R,J}>0$ such that
\begin{align}
\sup_{t\in J}\norm{\p_x^\alpha f(\cdot,t)}_{L^\infty(B(R))}
\le M_{R,J}a_{R,J}^{-|\alpha|}|\alpha|!
\qquad\mbox{for all }\alpha\in\mathbb N_0^3
\,.
\label{eq:interior:force:analytic}
\end{align}
Assume that the velocity has the following two properties.
\begin{enumerate}[label=(\roman*),leftmargin=2em]
\item Its angular mean $v:=\mathcal Pu$ satisfies, on the fixed Euclidean ball $B(1)$, for $-1<t<0$, and all nonnegative integers $a,b$ with $a+b\le2$,
\begin{subequations}
\label{eq:interior:mean:bounds}
\begin{align}
\|\p_r^a\p_z^b v_r(\cdot,t)\|_{L^\infty(B(1))}
&\le \Caxi(-t)^{-1/2-a/2-(1/2-h)b}\,,
\\
\| \p_r^a\p_z^b v_z(\cdot,t)\|_{L^\infty(B(1))}
+\|\p_r^a\p_z^b v_\theta(\cdot,t)\|_{L^\infty(B(1))}
&\le \Caxi(-t)^{-1/2-h-a/2-(1/2-h)b}
\,,
\end{align}
\end{subequations}
for a constant $\Caxi>0$ which is independent of $t$.
\item For each $t\in(-1,0)$ there is a radius $\rho(t)\in(0,1)$ such that the solution $u(\cdot,t)$ is exactly axisymmetric on the ball of radius $\rho(t)$; equivalently,
\begin{align}
w(\cdot,t)=0
\qquad\mbox{on }B(\rho(t))
\,.
\label{eq:interior:core}
\end{align}
No regularity or quantitative lower bound is assumed on $\rho$; we only need that $\rho(t)>0$ at all $t\in(-1,0)$.
\end{enumerate}
Then $u=v$ on $\mathcal Q$, and $(0,0)$ is a regular point: there exist $r_*,\delta_*>0$ such that
\begin{align}
\sup_{B(r_*)\times(-\delta_*,0)}|u|<\infty
\,.
\label{eq:interior:regular}
\end{align}
\end{theorem}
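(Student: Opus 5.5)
The proof has two parts. First we show that $w\equiv0$ on $\mathcal Q$, using analyticity. Then, once $u=v$ is axisymmetric, we rule out the singularity by zooming in with the anisotropic scales of~\eqref{eq:interior:mean:bounds} and proving rigidity of the resulting ancient limits.

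\textbf{Step 1: $w\equiv0$.} On compact subsets of $\mathcal Q$ the solution is smooth, and the force satisfies~\eqref{eq:interior:force:analytic}. Interior analyticity for Navier--Stokes with analytic forcing (Masuda, Kahane, or Foias--Temam type Gevrey estimates, localized) then makes $u(\cdot,t)$ real analytic in $x$ on $B(R)$ for each $t\in(-1,0)$. The non-axisymmetric part $w=u-\mathcal Pu$ is an average of rotations of analytic functions, so it is analytic on $B(R)$ too. By~\eqref{eq:interior:core} it vanishes on the open ball $B(\rho(t))$, and unique continuation for analytic functions on the connected set $B(R)$ gives $w(\cdot,t)=0$. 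Letting $R\uparrow1$ shows $u=v$ on $\mathcal Q$. The hypothesis on $\rho$ is used only through $\rho(t)>0$. The one point needing care is that analyticity in space holds at each interior time uniformly on compacts. This follows from smoothness on a slightly larger cylinder plus a local Gevrey-class bootstrap, so that no global data is required.

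\textbf{Step 2: blowup and Liouville.} Suppose $(0,0)$ is singular. By Step 1 the solution is axisymmetric with swirl, and the bounds~\eqref{eq:interior:mean:bounds} apply to $u$ itself. Choose times $t_n\uparrow0$ at which a suitable scale-critical quantity, such as $\sup_{t\le t_n}(-t)^{1/2+h}\|u\|_{L^\infty}$ or the circulation $\Gamma=ru_\theta$, nearly saturates its bound. Rescale anisotropically: $r$ by $(-t_n)^{1/2}$, $z$ by $(-t_n)^{1/2-h}$, time by $(-t_n)$, with $u_r$, $u_z$, $u_\theta$ normalized by their respective rates. The derivative bounds with $a+b\le2$ give uniform $C^{1,\alpha}$-type compactness. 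In the rescaled NS system the $z$-diffusion and the force drop out: the force is $C^2$-bounded, so after rescaling it carries a positive power of $(-t_n)$. Passing to the limit gives an ancient solution of a degenerate, $z$-parametrized axisymmetric system (diffusion in $r$ only, with $z$ entering through transport), which inherits uniform bounds for all $s<0$.

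\textbf{Step 3: rigidity and contradiction.} Use the circulation equation $\p_t\Gamma+u\cdot\nabla\Gamma=\Delta\Gamma-\frac2r\p_r\Gamma+rf_\theta$ together with the equation for $\omega_\theta/r$. The maximum principle gives a bound on $\Gamma$ in terms of its initial data plus $M_f$. Near the axis this bound makes $u_\theta$ subcritical, $|u_\theta|\lesssim r^{-1}$ with small constant, which is incompatible with the $(-t)^{-1/2-h}$ growth for small $r\sim(-t)^{1/2}$ unless the limit profile vanishes. Likewise, a Liouville argument for the ancient limit of $\omega_\theta/r$ (a bounded ancient solution of a drift--diffusion equation in five-dimensional radial variables) forces the limit to be trivial. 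That contradicts the normalization at the chosen times. Hence $u$ is bounded near $(0,0)$, and regularity~\eqref{eq:interior:regular} follows, for instance via the axisymmetric regularity criteria of Chen--Strain--Tsai--Yau or Koch--Nadirashvili--Seregin--Šverák once $|u|\lesssim r^{-1}$ is established, or via CKN $\varepsilon$-regularity.

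\textbf{Main obstacle.} Step 3 is the hardest part. Because the scaling is anisotropic with exponent $h>0$, the limit is supercritical, and standard axisymmetric Liouville theorems do not apply directly. I would first try to show that the limiting $z$-transport by $u_z$, combined with radial diffusion, forces $\Gamma$ to be monotone along characteristics, and then use the boundedness of $\Gamma$, which comes from the $C^2$ force bound, to obtain decay.
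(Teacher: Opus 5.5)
Your Step~1 is the paper's reduction: interior analyticity at each fixed $t<0$ (Kahane), then the identity theorem for $u-\mathscr R_\varphi u$, or equivalently for $w$, which vanishes on $B(\rho(t))$. That step is fine. Steps~2--3 do not close, and the first problem is the quantity you normalize on.

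After the zoom at $\ell_r=(-t)^{1/2}$, $\ell_z=(-t)^{1/2-h}$, a bounded circulation gets multiplied by $(-t_n)^h\to0$. So the limiting swirl vanishes automatically, and ``saturation'' of $\Gamma$ cannot serve as a normalization. The most the ancient limit yields is that the limiting vorticity vanishes. At the selected time the limiting meridional field $(V,W)$ is then bounded, divergence free, and satisfies $\p_RW=0$. This forces $V=0$ and $\p_ZW=0$, but leaves $W\equiv c$ arbitrary. A nonzero constant axial flow has zero vorticity and satisfies every rescaled bound, and nothing in the argument excludes it. Hence selecting times where $(-t)^{1/2+h}\|u\|_{L^\infty}$ nearly saturates gives no contradiction. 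The paper instead selects points where $(-t)\bigl(|\p_ru_r|+|u_r/r|+|\p_zu_z|\bigr)\ge c_0$; these first-order meridional quantities are exactly what such a limit must kill.

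Your Liouville step also needs more than boundedness, since bounded ancient drift--diffusion solutions such as constants need not vanish. What is actually used is the remote-past decay $|\Omega_\infty|\le\Caxi|\tau|^{-3/2-h}$, inherited from the time weights in the hypotheses. This decay is fed into a comparison principle for merely bounded distributional solutions, with a weak-$*$ limit drift and diffusion only in the radial $\RR^4$ variables, since the $z$-diffusion is lost in the limit. The case where the selected points recede from the axis on scales $\gg(-t_n)^{1/2}$ must be treated separately, using $\omega_\theta$ rather than $\omega_\theta/r$.

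Two smaller points. The maximum principle for $\Gamma$ on a local cylinder needs lateral boundary bounds; these come from a CKN regular annulus (the singular set at $t=0$ has zero $\mathcal H^1$ measure), not from ``initial data plus $M_f$''. And the limit has to be taken in the vorticity equations, because the pressure is not controlled after the zoom.

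The second, more serious gap is the endgame. Even a successful contradiction yields only relative smallness, $(-t)G_\rho(t)\to0$ in the paper's notation, not a bound on $u$. Neither $|u|\le C(-t)^{-1/2}$ nor $r|u|\le C$ follows for the meridional components; only $|u_\theta|\le C_\Gamma/r$ is known. So the Chen--Strain--Tsai--Yau and Koch--Nadirashvili--Seregin--\v{S}ver\'ak criteria do not apply, and ``hence $u$ is bounded'' is unjustified.

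The paper needs a separate local closure argument, in three steps:
\begin{enumerate}
\item Apply the maximum principle to $(-t)^\eta u_\theta$. Its zeroth-order coefficient $r^{-2}+u_r/r+\eta/(-t)$ is nonnegative once $(-t)|u_r/r|\le\eta$, and this gives $\|u_\theta\|_{L^\infty}\le C_\eta(-t)^{-\eta}$.
\item Run the localized enstrophy balance. Every stretching term has a coefficient controlled by $G$ except $(\p_ru_z)\omega_r\omega_z$, where $\p_ru_z$ may be of size $(-t)^{-1-h}$. That term is handled by integrating by parts in $z$ via $\omega_r=-\p_zu_\theta$, so the swirl enters only through its supremum.
\item Gr\"onwall then gives $\|\chi\omega\|_{L^2}^2\lesssim(-t)^{-C_*\eta}$ for a cutoff $\chi$, with $C_*\eta$ small. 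This puts $u\in L^4_tL^6_x$ near the origin, and regularity follows from the Gustafson--Kang--Tsai criterion.
\end{enumerate}
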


\begin{remark}
\label{rem:main:comments}
A few comments on the hypotheses of Theorem~\ref{thm:main} are in order:
\begin{itemize}[leftmargin=2em]
\item[(a)] The force assumptions~\eqref{eq:interior:force:c-two} and~\eqref{eq:interior:force:analytic} have distinct roles. Spatial analyticity~\eqref{eq:interior:force:analytic} is used at each fixed time before the blowup time, so the analyticity radius $a_{R,J}$ is allowed to tend to zero as $J$ approaches time $0$.\footnote{Condition~\eqref{eq:interior:force:analytic} holds if and only if, for every $R<1$ and every compact $J\subset(-1,0)$, each $f(\cdot,t)$ with $t\in J$ is the restriction of a holomorphic function on a fixed complex neighborhood of $\overline{B(R)}$, bounded uniformly in $t\in J$.} The regularity argument of Sections~\ref{sec:aniso:identities}--\ref{sec:aniso:closure}, which proves Theorem~\ref{thm:aniso:main}, uses instead the uniform spatial $C^2$ bound~\eqref{eq:interior:force:c-two}. A force which is real analytic in $(x,t)$ on a neighborhood of $\overline{B(1)}\times[-1,0]$ satisfies both conditions, and so does $f=0$. No symmetry of $f$ is assumed, although, together with~\eqref{eq:interior:core}, the hypothesis~\eqref{eq:interior:force:analytic} does make $\curl f$ rotation invariant on all of $\mathcal Q$; see Remark~\ref{rem:force:symmetry}.
\item[(b)] The bounds~\eqref{eq:interior:mean:bounds} concern only the angular mean of the solution. Apart from the $L^\infty_tL^2_x\cap L^2_tH^1_x(\mathcal Q)$ regularity implicit in the definition of a suitable weak solution, no bound on $w$ or its derivatives is assumed outside the core. The OpenAI construction does satisfy the bounds in~\eqref{eq:interior:mean:bounds}; see Section~\ref{sec:properties:values}.
\item[(c)] The OpenAI construction satisfies~\eqref{eq:interior:core} with $\rho(t)=c\sqrt{-t}$, for $c>0$ a constant such that $c<\sqrt{2X_a}$; see Section~\ref{sec:properties:structure}.
\item[(d)] The force used in~\cite{OpenAIManuscript} does not satisfy~\eqref{eq:interior:force:analytic}. This is consistent with the construction, whose force is built from compactly supported cutoffs.
\item[(e)] Exact axisymmetry on the core~\eqref{eq:interior:core} and analyticity of the force~\eqref{eq:interior:force:analytic} may be traded for an upper bound on the non-axisymmetric part of the velocity field. If $\|w(\cdot,t)\|_{C^3(B(1))}$ is bounded uniformly in $t\in (-1,0)$, then $(0,0)$ is regular. This statement requires no analyticity assumption at all; see Remark~\ref{rem:interior:approximate}. The OpenAI construction does not satisfy such a bound on $w$; see Section~\ref{sec:properties:criteria}.
\end{itemize}
\end{remark}

\subsection{From analytic forcing and axisymmetric core to an axisymmetric problem}
\label{sec:aniso:axisymmetric}

Our first observation is that the analyticity assumption~\eqref{eq:interior:force:analytic} on the force makes $u(\cdot,t)$ real analytic on $B(1)$, for every fixed $t\in(-1,0)$. This follows from the classical interior analyticity theorem for the Navier-Stokes system, due to Kahane~\cite[Theorem~1.2 and p.~387]{Kahane69}; we recall this result as Theorem~\ref{thm:analytic:interior}, and Corollary~\ref{lem:analytic:slice} is the form in which we apply it. Combined with the axisymmetric core assumption~\eqref{eq:interior:core}, this makes $u$ axisymmetric on all of $\mathcal Q$: for every rotation $Q_\varphi$ about the $z$-axis, the analytic field $u-\mathscr R_\varphi u$ vanishes on the core, hence on all of $B(1)$ by the identity theorem; these details are given in Section~\ref{sec:reduction:core}. Thus $u=v$, and the bounds on the angular mean in~\eqref{eq:interior:mean:bounds} become bounds on the full velocity field. Once $u$ is known to be axisymmetric, we average~\eqref{eq:nse:forced} over the rotations $\mathscr R_\varphi$, which leaves the velocity terms unchanged and replaces only the pressure and the force by their angular means. By~\eqref{eq:interior:force:c-two}, the angle-averaged force $\mathcal Pf$ remains bounded in $C^2$.

This reduces Theorem~\ref{thm:main} to the regularity of axisymmetric solutions of the 3D Navier-Stokes equations with $C^2$ forcing, under the anisotropic bounds inherited from~\eqref{eq:interior:mean:bounds}. That result is Theorem~\ref{thm:aniso:main} below, which is of independent interest. In order to state it, we fix the standing assumptions of Sections~\ref{sec:aniso:identities}--\ref{sec:aniso:closure}: $(u,\pi)$ is an axisymmetric suitable weak solution of the forced equations~\eqref{eq:nse:forced} on $\mathcal Q$, which is smooth on compact subsets of $\mathcal Q$,\footnote{Smoothness in Cartesian coordinates gives $u_r=u_\theta=\p_ru_z=0$ at $r=0$, for all $t<0$.} in the class described in Section~\ref{sec:aniso:setup}, and whose force satisfies~\eqref{eq:interior:force:c-two}. For $0<h<1/2$ we define the radial and axial lengths as in~\cite{OpenAIManuscript}:
\begin{align}
\ell_r(t)=(-t)^{1/2}\,,\qquad \ell_z(t)=(-t)^{1/2-h}
\,.
\label{eq:aniso:lengths}
\end{align}
The range $0<h<1/2$ ensures that both lengths tend to zero as $t\uparrow0$. The bounds in~\eqref{eq:aniso:bounds} below are imposed on the scalar cylindrical coefficients of $u$ and on their derivatives through total order two, with each radial derivative costing a factor of $\ell_r^{-1}$, and each axial derivative costing a factor of $\ell_z^{-1}$. The following theorem is the anisotropic counterpart of the main result of~\cite{CIVEulerLength}:

\begin{theorem}[{\bf Regularity of axisymmetric solutions under anisotropic bounds}]
\label{thm:aniso:main}
Let $(u,\pi)$ be an axisymmetric suitable weak solution of~\eqref{eq:nse:forced} in $\mathcal Q$, which is smooth on compact subsets of $\mathcal Q$, with a force satisfying~\eqref{eq:interior:force:c-two}. Let $0<h<1/2$. Suppose that there exists a constant $\Caxi>0$ such that
\begin{subequations}
\label{eq:aniso:bounds}
\begin{align}
|\p_r^a\p_z^b u_r(r,z,t)|
&\le \Caxi\,(-t)^{-1/2-a/2-(1/2-h)b}\,,
\\*
|\p_r^a\p_z^b u_z(r,z,t)|+|\p_r^a\p_z^b u_\theta(r,z,t)|
&\le \Caxi\,(-t)^{-1/2-h-a/2-(1/2-h)b}
\,,
\end{align}
\end{subequations}
on $\mathcal Q$, for all nonnegative integers $a,b$ with $a+b\le2$. Then, $(0,0)$ is a regular point.
\end{theorem}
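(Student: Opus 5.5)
We argue by contradiction, via a blow-up (zoom-in) at the putative singularity along the anisotropic lengths $\ell_r,\ell_z$ of \eqref{eq:aniso:lengths}, followed by a Liouville-type rigidity for the resulting ancient limit. Suppose $(0,0)$ is singular. By the Caffarelli--Kohn--Nirenberg criterion, applied with the divergence-free force $f-\nabla\pi_f$ of footnote~\ref{foot:aniso:pressure}, the quantity $\limsup_{t\uparrow0}(-t)^{1/2}\|u(\cdot,t)\|_{L^\infty(B(\ell_r(t)))}$ cannot vanish. Indeed, if it did, the scale-invariant local quantities would become small at small scales, and $\eps$-regularity would give boundedness.

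The first ingredient is the set of identities available for axisymmetric flows: the circulation $\Gamma=ru_\theta$, the vorticity component $\omega_\theta$, and the quantities $\Phi=\omega_r/r$, $\Omega=\omega_\theta/r$. The circulation obeys
\begin{align*}
\p_t\Gamma+u_r\p_r\Gamma+u_z\p_z\Gamma-\Big(\Delta-\tfrac{2}{r}\p_r\Big)\Gamma=rf_\theta\,,
\end{align*}
so it satisfies a maximum principle up to an error of size $O(M_f)$. Hence $|\Gamma|\le C$ uniformly up to $t=0$. Compare this with the a priori bound $|u_\theta|\le \Caxi(-t)^{-1/2-h}$: boundedness of $\Gamma$ gives $|u_\theta|\le C/r$, which beats the a priori bound whenever $r\gg(-t)^{1/2+h}$. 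We must also use the anisotropy. Rescale around time $t_n\uparrow0$ by setting
\begin{align*}
U^{(n)}(\rho,\zeta,s)=\text{(suitably normalized components of }u)\big(\ell_r(t_n)\rho,\ \ell_z(t_n)\zeta,\ t_n+\ell_r(t_n)^2 s\big)\,.
\end{align*}
Since $\ell_z\gg\ell_r$, the axial derivatives become negligible in the diffusion: $\p_z^2$ carries the factor $(\ell_r/\ell_z)^2=(-t)^{2h}\to0$. The bounds \eqref{eq:aniso:bounds} through second order give precompactness in $C^1_{loc}$ of the rescaled cylindrical coefficients, on backward-in-time domains exhausting $[0,\infty)_\rho\times\RR_\zeta\times(-\infty,0)$.

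The ancient limit then satisfies a degenerate, $z$-parametrized system. Normalize $u_r$ by $\ell_r^{-1}$ and $u_z,u_\theta$ by $\ell_r^{-1}(\ell_z/\ell_r)^{\,\cdot}$ consistently with the incompressibility condition $\p_r(ru_r)/r+\p_zu_z=0$; both terms are of order $(-t)^{-1}$, so this constraint survives in the limit. The momentum equations, however, force rigidity. The forcing term scales as $\ell_r^3 f\to0$ by \eqref{eq:interior:force:c-two}. The swirl term $u_\theta^2/r$ in the $u_r$ equation is of size $(-t)^{-1-2h}\cdot(-t)^{-1/2}$, compared with $\p_tu_r\sim(-t)^{-3/2}$. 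It is therefore dominant, unless it is balanced by $\p_r\pi$. The bounded circulation is exactly what kills it in the limit: $\Gamma$ bounded means the rescaled circulation tends to $0$. So in the limit $u_\theta\equiv0$, and the remaining swirl-free axisymmetric dynamics are controlled by $\Omega=\omega_\theta/r$, which satisfies a maximum principle with a forcing term bounded via the $C^2$ norm of $f$. In the same way, rescaled $\Omega$ vanishes in the limit. A divergence-free, curl-free, swirl-free limit with the decay dictated by the bounds, vanishing appropriately at $\rho=0$, must be trivial.

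This contradicts the nontrivial normalization inherited from singularity, for instance $\|U^{(n)}(\cdot,0)\|_{L^\infty}\gtrsim\eps_0$. Triviality of the limit feeds back into the smallness needed for $\eps$-regularity. Specifically, the scaled energy $r^{-1}\int_{Q_r}|\nabla u|^2$ and the $L^3$-quantity become small at the scale $\ell_r$, where the anisotropic cylinder $\ell_r\times\ell_z$ contains the parabolic cylinder $Q_{\ell_r}$. The CKN/Lin criterion then gives \eqref{eq:interior:regular}.

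The main obstacle is the step that passes from the anisotropic bounds to an honest smallness of a scale-invariant quantity. The a priori bounds are supercritical by the factor $(-t)^{-h}$ in $u_z,u_\theta$, so compactness alone does not give smallness. One must quantitatively propagate $|\Gamma|\le C$ and the bound on $\Omega$ to show that $u_\theta$ and $u_z$ are in fact $O((-t)^{-1/2})$ on $B(C\ell_r)$. For $u_z$ the route is the Biot--Savart law for $\omega_\theta=r\Omega$ on the anisotropic region. The logarithmic and tail contributions there are what I expect to require the most care.
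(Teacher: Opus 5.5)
There is a genuine gap, and it sits exactly at the step you flag as the ``main obstacle''. It is not a technicality: the contradiction is set up on a quantity the zoom cannot control.

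In your normalization the rescaled axial velocity is $W_n=(-t_n)^{1/2+h}u_z$. The rescaled azimuthal vorticity is $\delta_n^2\p_ZV_n-\p_RW_n$, with $\delta_n=(-t_n)^h\to0$. So even granting that the rescaled vorticity vanishes in the limit, you only learn $\p_RW^0=0$. Incompressibility and boundedness of $V^0$ then give $V^0=0$ and $\p_ZW^0=0$, but nothing excludes $W^0\equiv\mathrm{const}\neq0$. That is a uniform axial jet $u_z\approx c(-t)^{-1/2-h}$; the angular mean of the OpenAI construction has exactly $u_z(0,t)=j_0(-t)^{-1/2-h}$.

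Hence the limit is not ``trivial'', and a lower bound $\|U^{(n)}\|_{L^\infty}\gtrsim\eps_0$ is not contradicted. Conversely, a Type~I lower bound $(-t_n)^{1/2}|u|\ge\eps_0$ carried by $u_z$ or $u_\theta$ only yields $|W_n|\gtrsim\eps_0(-t_n)^h\to0$. Your opening claim that a singularity forces $\limsup(-t)^{1/2}\|u\|_{L^\infty(B(\ell_r))}>0$ is itself unproved, since $B(\sqrt{-t})$ does not cover the cylinders used in $\eps$-regularity.

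For the same reason the zoom only gives $\p_ru_z=o((-t)^{-1-h})$. Then $\ell_r^{-1}\int_{Q_{\ell_r}}|\nabla u|^2$ is only known to be $o(\ell_r^{-4h})$, not small, so your closing CKN step does not follow. The Biot--Savart upgrade of $u_z$ to $O((-t)^{-1/2})$ has nothing to act on: a constant jet is invisible to $\omega_\theta$ and lives precisely in the harmonic/tail part you expect to be delicate.

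The paper avoids both problems in two ways.
\begin{itemize}
\item It zooms at points of a fixed ball where $(-t)\bigl(|\p_ru_r|+|u_r/r|+|\p_zu_z|\bigr)\ge c_0$. When $r_n/\ell_r(t_n)\to\infty$ it uses a second, receding-axis zoom on $\omega_\theta$. These quantities are blind to $W^0$ and to $\p_ru_z$, and they are exactly what $V^0=0$, $\p_ZW^0=0$ kill; the remaining term $(-t)|\p_zu_r|\le\Caxi(-t)^h$ is small for free.
\item It never seeks Type~I bounds. Once $(-t)G_\rho(t)\to0$, the maximum principle for $(-t)^\eta u_\theta$, using $u_r/r\ge-\eta/(-t)$, gives $|u_\theta|\le C_\eta(-t)^{-\eta}$. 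In the local enstrophy balance, the dangerous term $\int\chi^2(\p_ru_z)\omega_r\omega_z$ is integrated by parts in $z$ via $\omega_r=-\p_zu_\theta$, so the swirl enters only through this slowly growing supremum. Gr\"onwall then gives $\|\chi\omega\|_{L^2}^2\les(-t)^{-1/16}$, hence $u\in L^4_tL^6_x$, and the Gustafson--Kang--Tsai criterion applies.
\end{itemize}

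Three further steps of your sketch also need repair.
\begin{itemize}
\item ``In the same way, rescaled $\Omega$ vanishes'' does not follow as it did for $\Gamma$. $\Omega$ is not bounded up to $t=0$, because its source $\p_z(u_\theta^2/r^2)$ is controlled only after the zoom. The limit vanishes for a different reason: it is a bounded distributional solution of a homogeneous equation, with diffusion in $R$ only and a weak-$*$ limit drift, and it decays like $|\tau|^{-3/2-h}$ in the remote past. A comparison principle for such solutions then forces it to be zero.
\item The pressure is uncontrolled after the zoom, so you must pass to the limit in the vorticity equations, not in the momentum equations.
\item The maximum principle for $\Gamma$ needs boundary data bounded up to $t=0$. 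This requires a sphere free of singular points, which partial regularity supplies.
\end{itemize}
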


\begin{remark}
\label{rem:aniso:comments}
We make a few comments on the hypotheses of Theorem~\ref{thm:aniso:main}.
\begin{itemize}[leftmargin=2em]
\item[(a)] The bounds in~\eqref{eq:aniso:bounds} say that the rescaled fields $(-t)^{1/2}u_r$, $(-t)^{1/2+h}u_z$, and $(-t)^{1/2+h}u_\theta$, regarded as functions of $(r/\ell_r(t),z/\ell_z(t))$, are bounded in $C^2$ uniformly in $t$; see~\eqref{eq:aniso:zoom:derivatives} below.
\item[(b)] Neither~\eqref{eq:aniso:bounds} nor the Type~I bound $|u|\le C(-t)^{-1/2}$ implies the other: at $a=b=0$ the bounds on $u_z$ and $u_\theta$ in~\eqref{eq:aniso:bounds} are weaker by the factor $(-t)^{-h}$, while at every $b\ge1$ the bound imposed on $u_r$ is smaller by the factor $(-t)^{hb}$ than the bound $(-t)^{-1/2-(a+b)/2}$ which the parabolic scaling attaches to a Type~I velocity bound. Theorem~\ref{thm:aniso:main} is therefore not contained in the Type~I results of Chen, Strain, Yau, and Tsai~\cite{CSTY08}, Chen, Strain, Tsai, and Yau~\cite{CSTY09}, Koch, Nadirashvili, Seregin, and \v{S}ver\'ak~\cite{KNSS09}, Seregin and \v{S}ver\'ak~\cite{SereginSverak09}, or~Zhang~\cite{QiZhang26}, nor are those results contained in it.
\item[(c)] The proof of Theorem~\ref{thm:aniso:main} may be completed using only the bounds on $u_r$ and $u_z$ from~\eqref{eq:aniso:bounds}, with the assumptions on $u_\theta$ dropped altogether; see Remark~\ref{rem:interior:meridional}. For cosmetic reasons, we nevertheless state~\eqref{eq:aniso:bounds} for the three components alike, and do not pursue the minimal hypothesis.
\end{itemize}
\end{remark}

Next, we outline the proof of Theorem~\ref{thm:aniso:main}; the estimates and the full proof are postponed to Sections~\ref{sec:aniso:identities}--\ref{sec:aniso:closure}. The strategy is that of the companion paper~\cite{CIVEulerLength}, whose steps are as follows. In~\cite{CIVEulerLength} we consider unforced axisymmetric solutions which are bounded in $C^2$ after rescaling \emph{isotropically} at a single vanishing length scale $\ell(t)$, for which $(-t)/\ell(t)^2$ tends to zero. Using this length scale we zoom in, along times and points at which the gradient of the meridional velocity is not small compared with $(-t)^{-1}$. Since the pressure is not controlled after such a zoom, the limit is taken in the vorticity equations rather than in the momentum equation. Partial regularity and the maximum principle bound the circulation $r u_\theta$ on a ball of fixed radius, and the rescaling multiplies it by a factor which vanishes, so that the swirl source of the potential vorticity equation disappears in the limit. The scalar carried into the limit is then either the potential vorticity or the azimuthal vorticity, depending on whether the rescaled axis stays at finite distance or recedes to infinity, and in both cases we prove that the limiting scalar vanishes at the upper endpoint of the rescaled time interval. The limiting rescaled meridional field is therefore bounded, with vanishing azimuthal vorticity and vanishing divergence; hence its gradient vanishes, which contradicts the selection of the points at which we zoomed in. The elimination of the circulation first, and of the potential vorticity second, is also the mechanism of the Euler rigidity theorem in~\cite[Theorem~4.5]{CIV26}; what~\cite{CIVEulerLength} adds is that this argument survives the passage to an ancient limit, in which the transporting field is only a weak-$*$ limit.

The \emph{anisotropy} inherent in the lengths~\eqref{eq:aniso:lengths} forces the proof in this paper to depart from~\cite{CIVEulerLength}. To see this, note that after the zoom-in, the radial diffusion coefficient is $(-t)/\ell_r(t)^2=1$, the axial one is $(-t)/\ell_z(t)^2=(-t)^{2h}$, and the rescaling multiplies the circulation by $(-t)/(\ell_r(t)\ell_z(t))=(-t)^h$. In the isotropic case all three of these quantities are of size $(-t)/\ell(t)^2 \to 0$ as $t\uparrow 0$, and thus all three terms vanish in the zoom-in limit. Here the last two vanish, but the first one does not. The first difficulty is therefore that radial diffusion survives in the limit, which is no longer governed by a transport equation: the step which eliminates the potential vorticity in~\cite{CIV26, CIVEulerLength}, its conservation along backward meridional trajectories, is unavailable. In place of an argument based on backward characteristics, we use Lemma~\ref{lem:aniso:comparison}, which is a comparison principle for bounded distributional solutions of a drift-diffusion equation whose diffusion acts in a (strict) subset of the variables.

As in~\cite{CIVEulerLength}, the zoom-in limit need not be a solution of~\eqref{eq:nse:forced}, so we take the curl of~\eqref{eq:nse:forced} first, which removes the pressure, and carry only the vorticity equations~\eqref{eq:aniso:theta} and~\eqref{eq:aniso:q} into the limit. Whether the rescaled axis stays at finite distance or recedes, the limiting scalar is bounded, solves a homogeneous equation on the whole space, and tends uniformly to zero in the remote past; the comparison principle then implies that it vanishes. Incompressibility and the boundedness of the limiting velocity then give smallness of $(-t)\p_ru_r$ and $(-t)\p_zu_z$, and, when the rescaled axis stays at finite distance, of $(-t)u_r/r$; when it recedes, that quotient is small by~\eqref{eq:aniso:bounds} alone. Lastly, the bound $(-t)|\p_zu_r|\le C(-t)^h$ is immediate from~\eqref{eq:aniso:bounds}, and it gives the relative smallness of this term as well.

The second difficulty is that $\p_ru_z$ may be as large as $C(-t)^{-1-h}$, so that an $o((-t)^{-1})$ bound on the full gradient of the meridional velocity $b = u_r e_r+u_z e_z$ is unavailable at any stage of the argument. We therefore track only the four quantities in 
\begin{align}
G_\rho(t):=\bigl\| 
|\p_r u_r (\cdot,t)|+|u_r(\cdot,t)/r |+|\p_z u_z (\cdot,t)|+|\p_z u_r (\cdot,t)|
\bigr\|_{L^\infty(B(\rho))}
\,,
\label{eq:aniso:G}
\end{align}
where $u_r/r$ is extended continuously across the axis. The second quantity is dominated by the first in the supremum over $B(\rho)$, since $u_r$ vanishes on the axis and the radial segment to the axis stays in $B(\rho)$; we keep it because Section~\ref{sec:aniso:closure} uses $u_r/r$ as the coefficient of $u_\theta$ in~\eqref{eq:aniso:scalar:nse:swirl} and of $\omega_\theta^2$ in~\eqref{eq:aniso:closure:stretch}. The relative smallness result, which replaces~\cite[Proposition~3.1]{CIVEulerLength}, is as follows: 

\begin{proposition}[{\bf Smallness of the meridional quantities}]
\label{prop:aniso:small}
Under the hypotheses of Theorem~\ref{thm:aniso:main}, for every fixed $0<\rho<1$ we have
\begin{align}
\lim_{t\uparrow0}(-t)G_\rho(t)=0
\,.
\label{eq:aniso:small}
\end{align}
\end{proposition}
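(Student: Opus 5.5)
We argue by contradiction, along the lines sketched above. The term $(-t)|\p_zu_r|\le \Caxi(-t)^{h}$ tends to zero directly by~\eqref{eq:aniso:bounds}. The term $|u_r/r|$ is dominated by $\sup_{B(\rho)}|\p_ru_r|$, so it suffices to control $\p_ru_r$ and $\p_zu_z$. We also keep $u_r/r$ in the contradiction hypothesis, to handle the finite-axis case.

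\textbf{Setting up the zoom.} Suppose there are $\eps>0$, times $t_n\uparrow0$ and points $(r_n,z_n)$ in $B(\rho)$ such that
\begin{equation*}
(-t_n)\bigl(|\p_ru_r|+|u_r/r|+|\p_zu_z|\bigr)(r_n,z_n,t_n)\ge\eps\,.
\end{equation*}
We rescale anisotropically:
\begin{equation*}
U_r^n(R,Z,s)=(-t_n)^{1/2}u_r\bigl(r_n+\ell_r(t_n)R,\;z_n+\ell_z(t_n)Z,\;t_n+(-t_n)s\bigr),
\end{equation*}
and similarly $U_z^n=(-t_n)^{1/2+h}u_z(\cdots)$. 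The time variable $s$ ranges over $(-(1-|t_n|)/|t_n|,\,1)\cap(-\infty,1)$; we restrict to $s\le 1/2$ so that $-t$ stays comparable to $-t_n$, with factors $(1-s)^{\text{power}}$ that are harmless locally in $s$. On compact sets in $(R,Z,s)$ with $s<1$, the bounds~\eqref{eq:aniso:bounds} give uniform $C^2$ bounds in $(R,Z)$. Time derivatives are controlled from the equation only weakly, so we pass to a subsequence with uniform local convergence in space and weak-$*$ convergence overall, using an Aubin--Lions type argument on the vorticity equations for compactness in time. The rescaled incompressibility condition reads
\begin{equation*}
\p_RU_r+\frac{\ell_r}{r}U_r+\p_ZU_z=0\,,
\end{equation*}
and it survives in the limit.

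\textbf{Limit equations and the two cases.} We take the rescaled azimuthal vorticity, which is dominated by $\p_RU_z$, since $\p_zu_r$ contributes only $O((-t)^h)$. In the case $r_n/\ell_r(t_n)\to\infty$ (receding axis), the limit of the $\omega_\theta/r$, or $\omega_\theta$, equation~\eqref{eq:aniso:theta} becomes
\begin{equation*}
\p_s\Omega+U\cdot\nabla\Omega-\p_R^2\Omega=0\,.
\end{equation*}
The swirl source $\p_z(u_\theta^2)/r$ carries the factor $(-t)^h$ times a circulation bounded by partial regularity and the maximum principle, so it vanishes. Axial diffusion carries $(-t)^{2h}$ and also vanishes, as do the forcing terms, which are $O((-t)^{3/2+h})$ by~\eqref{eq:interior:force:c-two}. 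In the case $r_n/\ell_r\to R_0<\infty$ we use the potential vorticity equation~\eqref{eq:aniso:q} in the $5$D-radial form, with diffusion $\p_R^2+\frac{3}{R}\p_R$ in the radial variable only. In both cases the limit scalar $\Omega$ is bounded, and it decays as $s\to-\infty$: the hypotheses give $|\Omega(\cdot,s)|\le C(1-s)^{-\gamma}$ for some $\gamma>0$ after rescaling, because the bounds at earlier times $t=t_n(1-s)$ expressed in the $t_n$ scales gain powers $(1-s)^{-h}$. Lemma~\ref{lem:aniso:comparison} (comparison with diffusion in the $R$ variables only) then yields $\Omega\equiv0$.

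\textbf{Conclusion, and the main obstacle.} With $\Omega=0$, the limit field satisfies $\p_RU_z=0$, so $U_z=U_z(Z,s)$, together with the divergence identity. In the receding case this identity is $\p_RU_r=-\p_ZU_z(Z,s)$. Boundedness of $U_r$ in $R\in\RR$ forces $\p_ZU_z=0$, and then $\p_RU_r=0$. This contradicts $|\p_RU_r|+|\p_ZU_z|\ge\eps$ at the origin; the term $|u_r/r|$ is negligible there, since it rescales to $U_r\,\ell_r/r\to0$. In the finite-axis case the analogous argument on the axisymmetric $5$D/$3$D structure gives $U_r=-\frac{R}{2}\p_ZU_z$, with $U_r$ linear in $R$ on a half-line. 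Here we need $U_r$ to be bounded uniformly in the limit for all $R$, which holds since the bounds are uniform, so $\p_ZU_z=0$ and $U_r=0$. Hence all three quantities vanish at the selected point, a contradiction.

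The main obstacle I expect is justifying the passage to the limit in the vorticity equation, together with the decay of $\Omega$ in the remote past uniformly in space. The transport field converges only weak-$*$ in time. I would first try to get strong convergence in time of $\Omega^n$ via a time-derivative bound in a negative Sobolev norm from the equation, and then verify the hypotheses of Lemma~\ref{lem:aniso:comparison} in its distributional form.
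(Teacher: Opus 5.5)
Your proposal is correct and follows the paper's proof essentially step for step. It uses the same anisotropic zoom, the same finite/receding-axis dichotomy (lifted potential vorticity with $(m,d)=(5,4)$ versus azimuthal vorticity with $(m,d)=(2,1)$), the circulation bound to remove the swirl source, Lemma~\ref{lem:aniso:comparison} with decay in the remote past, and then $\p_RU_z=0$, incompressibility and boundedness at the selected time.

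Two details should be added. First, in the finite-axis case the circulation bound alone only gives $S_n^2/R^2\lesssim\delta_n^2R^{-4}$ for the rescaled swirl $S_n$ (with $\delta_n=(-t_n)^h$). This is not locally integrable in the lifted variables $X\in\RR^4$, so near the axis you also need $|S_n/R|\le C$ from the $u_\theta$ bounds, or the logarithmic interpolation of Remark~\ref{rem:interior:meridional}. Second, the final contradiction should be read off a $C^1$ extraction of the velocity at the single time $s=0$, combined with the locally uniform convergence of the vorticity there, rather than from the space-time weak-$*$ limit.
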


To deduce regularity at $(0,0)$ from~\eqref{eq:aniso:small}, we first show that for every $\eta>0$ the bound $\|u_\theta(\cdot,t)\|_{L^\infty}\le C_\eta(-t)^{-\eta}$ holds on an interior ball, at all sufficiently late times; we obtain this bound by applying the maximum principle in the swirl equation to the function $(-t)^\eta u_\theta$. We then consider the local enstrophy balance, which contains the nonlinear term $(\p_ru_z)\omega_r\omega_z$, whose coefficient $\p_ru_z$ may be of size $(-t)^{-1-h}$, too large to be placed in $L^\infty$ and fed to Gr\"onwall's inequality. Appealing instead to the identity $\omega_r=-\p_zu_\theta$, we integrate by parts in $z$, which moves the derivative from the swirl onto the product $(\p_ru_z)\omega_z$. As a consequence, the swirl enters only through its supremum, and the two factors of the product are controlled by the enstrophy and the enstrophy dissipation through local div-curl estimates. The resulting enstrophy estimate then closes and gives a Ladyzhenskaya-Prodi-Serrin bound up to time zero, as we show in Section~\ref{sec:aniso:closure}.

\subsection{Prior results on axisymmetric Navier-Stokes}
\label{sec:aniso:prior}
Without swirl, axisymmetric solutions of 3D Navier-Stokes are globally regular; see~Ladyzhenskaya~\cite{Ladyzhenskaya68} and also Ukhovskii and Yudovich~\cite{UY68}. With swirl, the global regularity question is open and only conditional results are available. Singularities which obey a scale-invariant (so-called Type~I) pointwise bound are excluded: $|u|\le C(r^2-t)^{-1/2}$ by Chen, Strain, Yau, and Tsai~\cite{CSTY08}; $r|u|\le C$ by Koch, Nadirashvili, Seregin, and \v{S}ver\'ak~\cite{KNSS09}, and $|u|\le C(-t)^{-1/2}$ by Chen, Strain, Tsai, and Yau~\cite{CSTY09}. Seregin and \v{S}ver\'ak~\cite{SereginSverak09} proved a local Type~I regularity criterion which imposes the bound $\le C (-t)^{-1/2}$ only on the meridional part of the velocity. Zhang~\cite{QiZhang26} recently proved regularity under the one-component one-sided Type~I assumption: $u_r \geq - C (-t)^{-1/2}$ for some $C>0$, with $r u_\theta$ bounded. For a detailed account of the many other conditional results available for axisymmetric Navier-Stokes, we refer to the book of Tsai~\cite[Chapter~10]{Tsai18}.

Past the Type~I threshold, singularities are constrained by the growth of critical norms: the $L^\infty_tL^3_x$ theorem of Escauriaza, Seregin, and \v{S}ver\'ak~\cite{ESS03} was made quantitative by Tao~\cite{Tao21}, and was adapted to the axisymmetric class, with quantitative improvements, by Palasek~\cite{Palasek21} and by O\.za\'nski and Palasek~\cite{OzanskiPalasek23}.

Closer to the hypotheses of this paper, Seregin~\cite{Seregin20} showed that an axisymmetric singularity must be of Type~II, in the sense of local scale-invariant integral quantities. Moreover, in~\cite{Seregin23,Seregin24,Seregin25,Seregin26} he reduced the Type II blowup scenario to a Liouville problem for ancient Euler solutions, which may be ruled out in special cases.

\subsection{Research context for the OpenAI construction}
\label{sec:aniso:builds}
The problem of spontaneous generation of singularities in incompressible fluids has a long history and a strong scientific motivation. Research in this area has produced a vast literature which forms the fertile environment surrounding the OpenAI construction. We do not have space here to do justice to the numerous ideas and results in this area, but it would not be amiss to mention some of the important developments which are
relevant to~\cite{OpenAIManuscript}.

For the incompressible 3D Euler equations, Elgindi~\cite{Elgindi21} proved finite time singularity formation for axisymmetric solutions without swirl, with a velocity of class $C^{1,\alpha}$ for $\alpha>0$ sufficiently small; see also the finite energy version in Elgindi, Ghoul, and Masmoudi~\cite{EGM21}. Earlier this year, blowup constructions in the swirl-free axisymmetric setting were obtained for the entire permissible range $\alpha\in(0,1/3)$, independently, by Shkoller~\cite{Shkoller26},  Chen~\cite{Chen26a,Chen26b}, and Shao, Wei, Zhang, and Zhang~\cite{SWZZ26}. For smooth initial data in the presence of a solid boundary, Chen and Hou~\cite{ChenHouPNAS25} proved finite time blowup for 3D Euler using modulated self-similar analysis and a computer-assisted argument. Closer to the Navier-Stokes equations, Tao~\cite{Tao16} constructed finite time blowup for an ``averaged'' Navier-Stokes equation,  whose modified nonlinearity retains the scaling and energy structure of the original PDE. Numerical and machine-guided searches for a self-similar  blowup profile for 3D Euler and other incompressible fluid models were performed in~\cite{HouHuang22,Hou23,WLGB23,WangEtAl25,GDA26,DGGA26}. These computations have shaped the expectations for a rigorous construction: which scenarios are worth pursuing, and at which regularity a singularity should be looked for.
All results mentioned in this paragraph consider the classical, \emph{unforced}, problem.\footnote{We note that recently OpenAI has also claimed to prove finite time blowup for the 3D Euler equations, from smooth compactly supported initial data~\cite{OpenAIEuler}.}

A different route to blowup for the 3D Euler equations (and related models), \emph{in the presence of a body force}, was opened by C\'ordoba and Mart\'inez-Zoroa. The force injects, at a sequence of times accumulating at the blowup time, small perturbations at ever higher frequencies, which the nonlinearity amplifies. The question is then how smooth and how small the force can be made. We refer to~\cite{CordobaMartinezZoroa23} for the 3D Euler result, with a force which lies in $C^{1,1/2-\eps}\cap L^2$ uniformly in time, and to~\cite{CordobaMartinezZoroa24} for the 2D incompressible porous media result, with a source which is $C^\infty$ smooth in space, uniformly in time. In this direction, Alp\"oge and Buckmaster have reached a force which is $C^\infty$ in both space and time. Very recently they announced finite time blowup for the 3D Euler equations~\cite{AlpogeBuckmasterEuler26}, for the inviscid 2D Boussinesq system~\cite{AlpogeBuckmasterBoussinesq26}, and, with Coiculescu, for the 2D incompressible porous media equation~\cite{AlpogeBuckmasterCoiculescu26}. Each of these three papers considers a $C^\infty_{x,t}$ smooth force.

At the technical level, the OpenAI manuscript names two additional research programs whose ideas have influenced its construction. For the evolution of the wavevectors and of the velocity polarizations of the oscillatory pulses along the background flow,~\cite{OpenAIManuscript} records the methods of Lifschitz and Hameiri~\cite{LifschitzHameiri91} and of Friedlander and Vishik~\cite{FriedlanderVishik91}, which detect the instability of an inviscid flow under short wavelength perturbations. For the realization of a prescribed stress by the addition of high frequency oscillations,~\cite{OpenAIManuscript} 
records the use of ideas from the convex integration program for incompressible fluid equations, introduced by De~Lellis and Sz\'ekelyhidi~\cite{DeLellisSzekelyhidi09,DeLellisSzekelyhidi13}, and then further developed in~\cite{DaneriSzekelyhidi17,Isett18,BuckmasterVicol19} and many others.  

We emphasize that several decades of conditional regularity results restrict the form that a putative 3D Navier-Stokes singularity may take; this helps tremendously if one tries to narrow the search space for a blowing up solution. For a suitable weak solution whose force is bounded, the partial regularity theory of Caffarelli, Kohn, and Nirenberg~\cite{CKN} forces the singular set to have vanishing one-dimensional parabolic Hausdorff measure. The singularity, moreover, cannot be of the backward self-similar form proposed by Leray, cf.~Ne\v{c}as, R\r{u}\v{z}i\v{c}ka, and \v{S}ver\'ak~\cite{NRS96} for $L^3$ profiles, Tsai~\cite{Tsai98} for $L^p$ profiles with $3<p<\infty$ under local energy assumptions, and Pineau and Vicol~\cite{PineauVicol26} for rotated self-similar profiles which obey a Type I bound, at small or large rotation speeds. Nor can the direction of the vorticity stay coherent, by the alignment criterion of Constantin and Fefferman~\cite{ConstantinFefferman93}. In the axisymmetric class, the Type~I bounds recalled in Section~\ref{sec:aniso:prior} likewise exclude singularities. Self-similar Euler profiles with similarity exponent below $1/2$, which is the range that a Navier-Stokes zoom at an Euler length would require, are also excluded when the profile is smooth and satisfies an outgoing property~\cite{CIV26}. Note that apart from the partial regularity theory, these results consider the unforced Navier-Stokes problem. The above list is far from being complete; we refer to the book of Lemari\'e-Rieusset~\cite{LemarieRieusset16} for an excellent recent survey. 

\begin{remark}[{\bf Forced singularity constructions for shell models}]
\label{rem:shell:models}
Returning to the forced constructions, we draw an analogy with the finite time singularities constructed for model equations motivated by Navier-Stokes. Palasek~\cite[Theorem~1.3 and Remark~1.5]{Palasek26} recently proved finite time blowup in the energy supercritical range for a viscous shell model of Obukhov type, with super-exponentially separated frequencies. His initial data is smooth, and the smooth force he considers is such that its norms all tend to zero at the blowup time~\cite[Remark~1.4]{Palasek26}. The force is nonzero at every time before the blowup, so it vanishes identically on no time interval ending at the singularity. The force is a residual, as in~\cite{OpenAIManuscript}: here it is the dissipation of each mode, switched off until that mode is activated~\cite[Section~3.3]{Palasek26}. Interestingly, Palasek notes that the force cannot be dropped in the parameter range he associates to three dimensions~\cite[Remark~1.4 and Section~1.3.2]{Palasek26}; this statement is offered without proof, and is attributed to forthcoming work of Looi.
\end{remark}

\subsection{Organization of the paper}
\label{sec:aniso:organization}

Section~\ref{sec:reduction} proves the analytic symmetry reduction, deduces Theorem~\ref{thm:main} from Theorem~\ref{thm:aniso:main}, and records the consequences for the force. Sections~\ref{sec:aniso:identities}--\ref{sec:aniso:closure} prove Theorem~\ref{thm:aniso:main}. Appendix~\ref{app:scales} records the properties claimed for the OpenAI construction which enter the hypotheses of our results, and what those results give for it.

\section{Reduction to the axisymmetric problem}
\label{sec:reduction}

In this section we deduce Theorem~\ref{thm:main} from Theorem~\ref{thm:aniso:main}. Throughout this section $v=\mathcal Pu$ and $w=u-v$ are as in~\eqref{eq:interior:average}, $\mathcal Q=B(1)\times(-1,0)$, and $(u,\pi)$ is smooth on compact subsets of the cylinder on which it is given, so that $f$ is smooth there as well.

\subsection{Spatial analyticity at a fixed time before blowup}
\label{sec:exterior:analytic}
That $u(\cdot,t)$ is real analytic in space at each fixed $t<0$ is classical. We recall the statement in the form in which it is used here and in Remark~\ref{rem:exterior:nonanalytic}. No analyticity in time is assumed.

\begin{theorem}[{\bf Interior spatial analyticity}~{\cite[Theorem~1.2]{Kahane69}}]
\label{thm:analytic:interior}
Let $R>0$, $t_1<t_2$, and let $(u,\pi)$ be a solution of~\eqref{eq:nse:forced} on the cylinder $B(R)\times(t_1,t_2)$, smooth on compact subsets of that cylinder. Assume that the force is real analytic in the space variables, locally uniformly on cylinders compactly contained in $B(R)\times(t_1,t_2)$: for every $R'<R$ and every compact interval $J\subset(t_1,t_2)$ there are $M<\infty$ and $a>0$ such that\footnote{Since $k!\le k^k\le e^kk!$ for $k\ge1$, the hypothesis~\eqref{eq:analytic:interior:force} is unchanged if $|\alpha|!$ is replaced by $|\alpha|^{|\alpha|}$, which is the form in which Kahane states it on p.~387 of~\cite{Kahane69}.}
\begin{align}
\sup_{t\in J}\norm{\p_x^\alpha f(\cdot,t)}_{L^\infty(B(R'))}
\le Ma^{-|\alpha|}|\alpha|!
\qquad\mbox{for all }\alpha\in\mathbb N_0^3
\,.
\label{eq:analytic:interior:force}
\end{align}
Then, the velocity obeys a bound of the same form: for every $R'<R$ and every compact interval $J\subset(t_1,t_2)$ there are $M'<\infty$ and $a'>0$ such that
\begin{align}
\sup_{t\in J}\norm{\p_x^\alpha u(\cdot,t)}_{L^\infty(B(R'))}
\le M'(a')^{-|\alpha|}|\alpha|!
\qquad\mbox{for all }\alpha\in\mathbb N_0^3
\,.
\label{eq:analytic:interior:velocity}
\end{align}
In particular, $u(\cdot,t)$ is real analytic on $B(R)$ for every $t\in(t_1,t_2)$. The constants $M'$ and $a'$ depend on $R'$ and $J$, and they need not be uniform up to the parabolic boundary: $a'$ may tend to zero as $J$ approaches an endpoint of $(t_1,t_2)$.
\end{theorem}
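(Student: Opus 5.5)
The plan is to establish the factorial bounds~\eqref{eq:analytic:interior:velocity} by induction on $k=|\alpha|$, using localized $L^2$ energy estimates on a nested family of shrinking cylinders, and then to pass from $L^2$ to $L^\infty$ by Sobolev embedding. We fix $R'<R$ and a compact $J=[s_1,s_2]\subset(t_1,t_2)$, and choose $R'<R''<R$ and $J\subset J''=[s_1-\delta,s_2]\subset(t_1,t_2)$. On $B(R'')\times J''$ the solution is smooth, so all Sobolev norms of $u$ and $\pi$ are finite there, with constants depending only on $R'',J''$. For $k\ge0$ we use the Cauchy-type nested radii $R_k=R'+(R''-R')2^{-k}$, or the Kahane/Masuda choice $R_{k,j}=R'+(R''-R')(1-j/k)$ with $k$ steps, which gives a loss $\sim k/(R''-R')$ per derivative. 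Similarly, we use nested initial times $s_{k}$ increasing from $s_1-\delta$ to $s_1$, and cutoffs $\chi_k$ with $|\nabla\chi_k|\les k$ and $|\p_t\chi_k|\les k^2$ in the appropriate parabolic sense. The inductive quantity is
\begin{align*}
E_k:=\sup_{|\alpha|=k}\Bigl(\sup_{t}\norm{\chi_k\p^\alpha u(t)}_{L^2}^2+\int\norm{\nabla(\chi_k\p^\alpha u)}_{L^2}^2\,dt\Bigr)^{1/2}.
\end{align*}
The target is $E_k\le M_0 A^{k}(k-3)!$ for $k\ge3$, with $A$ large. The shift by three derivatives absorbs the Sobolev embedding $H^2\subset L^\infty$ at the end and makes the Leibniz sums summable.

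For the induction step we apply $\p^\alpha$ to~\eqref{eq:nse:forced}, multiply by $\chi_k^2\p^\alpha u$, and integrate, which produces four groups of terms.

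\emph{Cutoff terms.} These come from $\p_t$ and from $\Delta$ falling on $\chi_k$. They cost factors $k$ or $k^2$ against $E_{k-1}$ or $E_{k}$ on the slightly larger domain. This loss is compensated because the hypothesis allows $A\gg 1$, and $k\,(k-4)!\le (k-3)!$ up to a constant.

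\emph{Force term.} It is bounded directly by~\eqref{eq:analytic:interior:force}, with $A\ge a^{-1}$.

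\emph{Nonlinear term.} We write $\p^\alpha(u\cdot\nabla u)=\sum_{\beta\le\alpha}\binom{\alpha}{\beta}\p^\beta u\cdot\nabla\p^{\alpha-\beta}u$. The top term $\beta=0$ vanishes up to a cutoff commutator, by $\div u=0$. For the remaining terms we place the factor with fewer derivatives in $L^\infty$, via Sobolev embedding and the induction hypothesis, and the other factor in $L^2$. We then use the standard combinatorial inequality $\sum_{\beta}\binom{k}{j}(j-3)!\,(k-j-2)!\les (k-3)!$, which is the reason for the shift. The induction hypothesis must therefore hold on the larger domain with index $k'<k$, which is exactly why the domains are nested.

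\emph{Pressure term.} This is the step I expect to be the main obstacle, since $\pi$ is nonlocal and $\p^\alpha\pi$ does not localize. I would handle it through $-\Delta\pi=\p_i\p_j(u_iu_j)-\div f$ on $B(R'')$. We split $\pi=\pi_1+\pi_2$, where $\pi_1$ solves the Dirichlet problem with this right-hand side times a cutoff, and $\pi_2$ is harmonic in the interior. The harmonic part is real analytic with Cauchy estimates $\norm{\p^\alpha\pi_2}_{L^\infty(B(R_k))}\les C^{k}k!$ in terms of its $L^1$ size, which is controlled by~\eqref{eq:interior:energy:class}-type bounds on $B(R'')\times J''$. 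For $\pi_1$, Calder\'on–Zygmund estimates give $\norm{\p^\alpha\nabla\pi_1}_{L^2}\les\norm{\p^\alpha(u\otimes u)}_{H^1}+\norm{\p^\alpha f}_{L^2}$. This reduces the pressure term to the same Leibniz sums as the nonlinear term, plus commutators with cutoffs. These commutators again lose one power of $k$, absorbed by the nesting.

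Alternatively, and perhaps more cleanly, one can avoid the pressure by running the induction on the vorticity equation $\p_t\omega+u\cdot\nabla\omega-\omega\cdot\nabla u-\Delta\omega=\curl f$. In that case $u$ is recovered from $\omega$ locally by a div-curl estimate, with a harmonic remainder that again obeys Cauchy estimates. I would try this route first. It remains to close the induction by choosing $M_0$ from the finite norms at $k\le 3$ and $A$ large depending on $a$, $M$, the distance $R''-R'$, and $\delta$. Finally, $H^2$ Sobolev embedding turns the $L^2$ bounds into~\eqref{eq:analytic:interior:velocity}, with $a'\sim \min(a,R''-R',\delta^{1/2})/C$. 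This dependence on $\delta$ is consistent with the remark that $a'$ may degenerate as $J$ approaches the endpoints of $(t_1,t_2)$.
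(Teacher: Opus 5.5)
Your plan is a viable but genuinely different route from the paper's, though its bookkeeping needs the repairs described in the second paragraph.

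\emph{Comparison.} The paper does not reprove the theorem. It is Kahane's Theorem~1.2, and the paper only checks that Kahane's argument absorbs a nonconservative force. Kahane estimates spatial derivatives through Serrin's representation formulas for $(u,\omega)$, in which the vorticity is a heat potential of $\omega\wedge u$ plus a caloric part. These formulas are cast as an abstract integral system (his Theorem~1.3), whose inductive estimates give the factorial bounds. The force enters the vorticity heat potential at order zero, in the slot of the bilinear term. Adjoining $f$ to the unknown $(u,\omega)$ therefore makes it one more component of the inhomogeneity, in rows whose kernel blocks vanish. The triple $(u,\omega,f)$ is bounded on compact cylinders by smoothness, and \eqref{eq:analytic:interior:force} is exactly Kahane's condition on the inhomogeneity.

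That route never meets the pressure, and it inherits all of its cutoff and combinatorial bookkeeping from a published proof. Yours is self-contained and makes explicit how $a'$ depends on $a$, $R''-R'$, $\delta^{1/2}$ and the size of $u$, but you must do that bookkeeping yourself. Your vorticity variant, with $u$ recovered from $\omega$ up to a harmonic remainder, is close in spirit to Serrin's decomposition, with energy estimates in place of the heat potential. In the velocity--pressure variant, the simplest device is to carry $\pi$ as one more unknown in the induction. The local Caccioppoli estimate for $-\Delta\pi=\p_i\p_j(u_iu_j)-\div f$ then costs one factor $k$ per derivative, like the velocity.

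\emph{Repairs.} First, the nesting as written cannot close. Dyadic radii cost $2^k$ per derivative, which gives growth like $2^{k^2/2}$. No single-index family of spatial domains can have gaps of size $1/k$, since $\sum_k 1/k=\infty$; in time, gaps $\delta/k^2$ are summable, so the problem is purely spatial. The $k$-step families $R_{N,j}$ depend on the target order $N$. The bound on $E_{k'}$ that step $k$ needs therefore lives on $B(R_{k,k'})$, a larger ball than the one on which step $k'$ proved it.

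The standard fix is a weighted norm $\sup_{0<d\le d_0}d^{\,k}\norm{\p^k u}$, taken over the subcylinder at parabolic distance $d$ from the boundary of $B(R'')\times J''$. At step $k$ you pass from distance $d(1-1/k)$ to $d$. Since $(1-1/k)^{-k}\le e$, this loses only a factor of order $k$ per derivative. Products of weighted quantities are again weighted quantities, so a single-index induction closes.

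Second, your combinatorial inequality fails at its endpoint: the $j=3$ term is about $\frac k6(k-3)!$. Moreover, with $A^k$ unshifted, the two derivatives spent on $H^2\subset L^\infty$ cost a factor $A^2$ that no choice of $A$ absorbs. Both problems are cured by shifting the power of $A$ together with the factorial and using a larger shift, e.g.\ $E_k\le M_0A^{k-4}(k-4)!$. You should also charge each factor of top order $j$, measured in $L^2_{t,x}$, to the dissipation at level $j-1$.
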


For a conservative force this is Kahane's theorem~\cite[Theorem~1.2]{Kahane69}, proved there for weak solutions, under the hypotheses of Serrin's interior regularity theorem~\cite[Section~3]{Serrin62}. For a nonconservative force, Kahane states on p.~387 of~\cite{Kahane69} that the conclusion of~\cite[Theorem~1.2]{Kahane69} persists once $f$ obeys~\eqref{eq:analytic:interior:force} in its $|\alpha|^{|\alpha|}$ form, with analyticity understood as the bound~\cite[equation~(1.5)]{Kahane69}, which is~\eqref{eq:analytic:interior:velocity} above. The point is not taken up again in that paper, so we indicate why the proof goes through. First, the spatial derivatives are estimated in~\cite{Kahane69} through the representation formulas of Serrin~\cite[equations~(18) and~(20)]{Serrin62}. With a force, the heat potential for the vorticity carries $f$ at order zero, in the slot of the bilinear term~\cite[Section~4, equation~(23) and the display which follows it]{Serrin62}. Second, adjoining $f$ to the unknown $(u,\omega)$ of the abstract integral system of~\cite[Theorem~1.3]{Kahane69} makes the force one more component of the inhomogeneity, in rows whose two kernel blocks vanish. Third, the two hypotheses of that theorem hold: the unknown, now the triple $(u,\omega,f)$, is bounded on every compact cylinder, by smoothness here and by~\cite[Theorem~1.1]{Kahane69} in~\cite{Kahane69}; and the inhomogeneity obeys~\cite[equation~(1.5)]{Kahane69}, which for $f$ is the bound~\eqref{eq:analytic:interior:force}. The inductive estimates of~\cite[Section~4]{Kahane69} then apply without change.

The hypothesis~\eqref{eq:interior:force:analytic} of Theorem~\ref{thm:main} is~\eqref{eq:analytic:interior:force} on the unit parabolic cylinder. In this paper we apply Theorem~\ref{thm:analytic:interior} in the following form.

\begin{corollary}[{\bf Spatial analyticity at a fixed time before blowup}]
\label{lem:analytic:slice}
Let $R,\delta>0$, and let $(u,\pi)$ be a solution of~\eqref{eq:nse:forced} on $B(R)\times(-\delta,0)$, smooth on compact subsets of that cylinder. Suppose that for some $0<R'\le R$, some $-\delta<t_1<t_2<0$, and some $M<\infty$ and $a>0$,
\begin{align}
\sup_{t\in[t_1,t_2]}\norm{\p_x^\alpha f(\cdot,t)}_{L^\infty(B(R'))}
\le Ma^{-|\alpha|}|\alpha|!
\qquad\mbox{for all }\alpha\in\mathbb N_0^3
\,.
\label{eq:analytic:slice:force}
\end{align}
Then, for every $t\in(t_1,t_2)$, the velocity $u(\cdot,t)$ is real analytic on $B(R')$.
\end{corollary}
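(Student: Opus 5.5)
The plan is to apply Theorem~\ref{thm:analytic:interior} on a slightly smaller open cylinder on which the force hypothesis holds with \emph{uniform} constants. The corollary assumes the bound~\eqref{eq:analytic:slice:force} only on the single closed cylinder $\overline{B(R')}\times[t_1,t_2]$, whereas Theorem~\ref{thm:analytic:interior} asks for locally uniform bounds on every compact subcylinder of its domain. So I would take as the new domain the open cylinder $B(R')\times(t_1,t_2)$, which is contained in $B(R)\times(-\delta,0)$. The solution $(u,\pi)$ restricted to it is still a solution of~\eqref{eq:nse:forced}. It is smooth on compact subsets of $B(R')\times(t_1,t_2)$, because these are compact subsets of the original cylinder.

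Next I would check the force hypothesis~\eqref{eq:analytic:interior:force} on this new cylinder. Fix $R''<R'$ and a compact interval $J\subset(t_1,t_2)$. Then $B(R'')\subset B(R')$ and $J\subset[t_1,t_2]$, so
\[
\sup_{t\in J}\norm{\p_x^\alpha f(\cdot,t)}_{L^\infty(B(R''))}
\le\sup_{t\in[t_1,t_2]}\norm{\p_x^\alpha f(\cdot,t)}_{L^\infty(B(R'))}
\le Ma^{-|\alpha|}|\alpha|!
\]
for all $\alpha\in\mathbb N_0^3$. This is the required hypothesis, with the same constants $M$ and $a$ for every choice of $R''$ and $J$.

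Theorem~\ref{thm:analytic:interior}, applied with $R'$ in place of $R$ and $(t_1,t_2)$ as the time interval, then yields the following. For every $R''<R'$ and every compact $J\ni t$, there are constants $M'$ and $a'>0$ such that
\[
\norm{\p_x^\alpha u(\cdot,t)}_{L^\infty(B(R''))}\le M'(a')^{-|\alpha|}|\alpha|!\qquad\mbox{for all }\alpha\in\mathbb N_0^3 .
\]

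The last step is to pass from these Cauchy-type bounds to real analyticity. Fix $t\in(t_1,t_2)$ and a point $x_0\in B(R')$, and choose $R''$ with $|x_0|<R''<R'$. By Taylor's formula with remainder, combined with the multi-index count $\sum_{|\alpha|=k}k!/\alpha!=3^k$, the bounds above show that the Taylor series of $u(\cdot,t)$ at $x_0$ converges to $u(\cdot,t)$ on the ball $B(x_0,\min(a'/6,\,R''-|x_0|))$. Since $x_0\in B(R')$ was arbitrary, $u(\cdot,t)$ is real analytic on all of $B(R')$.

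There is no real obstacle in this argument; the only point needing care is the bookkeeping of the domains. The force bound is assumed only on $B(R')$, so the conclusion holds only on the open ball $B(R')$, and the time $t$ must lie in the open interval $(t_1,t_2)$ so that it sits inside a compact $J\subset(t_1,t_2)$.
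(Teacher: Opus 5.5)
Your proposal is correct and is the paper's argument: apply Theorem~\ref{thm:analytic:interior} on the cylinder $B(R')\times(t_1,t_2)$, where \eqref{eq:analytic:slice:force} gives \eqref{eq:analytic:interior:force} with the same $M$ and $a$. Your final Taylor-remainder step is valid but redundant, since Theorem~\ref{thm:analytic:interior} already concludes that $u(\cdot,t)$ is real analytic on the ball of its cylinder.
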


This is Theorem~\ref{thm:analytic:interior} on the cylinder $B(R')\times(t_1,t_2)$, on which~\eqref{eq:analytic:slice:force} gives~\eqref{eq:analytic:interior:force} with the same $M$ and $a$. Since the zero force satisfies~\eqref{eq:analytic:slice:force} on every cylinder, for the unforced equations $u(\cdot,t)$ is real analytic on $B(R)$ at every $t\in(-\delta,0)$.

We note that analyticity in space and time for the Navier-Stokes equations was proved by Masuda~\cite{Masuda67}; this result was extended to uniform analyticity up to an analytic boundary by Komatsu~\cite{Komatsu80}. Quantitative lower bounds on the radius of spatial analyticity are due to Gruji\'c and Kukavica~\cite{GK98}, in terms of $L^p$ norms of the initial datum. At interior points, and for a divergence-free force analytic on a ball, this result is due to Bradshaw, Gruji\'c, and Kukavica~\cite{BGK15}.

\subsection{Axisymmetry on a fixed ball, and the proof of Theorem~\texorpdfstring{\ref{thm:main}}{1.1}}
\label{sec:reduction:core}

\begin{proof}[Proof of Theorem~\ref{thm:main}]
By Corollary~\ref{lem:analytic:slice} on $\mathcal Q$, applied to each cylinder $B(R)\times J$ in~\eqref{eq:interior:force:analytic}, the velocity $u(\cdot,t)$ is real analytic on $B(1)$ for every $t\in(-1,0)$. We fix such a $t$ and an angle $\varphi\in[0,2\pi)$. With the notation in~\eqref{eq:interior:average}, we define
\begin{align}
F_\varphi(x)=u(x,t)-(\mathscr R_\varphi u)(x,t)=u(x,t)-Q_\varphi u(Q_\varphi^{-1}x,t)
\,.
\label{eq:interior:defect}
\end{align}
Since $Q_\varphi$ preserves $B(1)$, the field $F_\varphi$ is real analytic on $B(1)$. Since $Q_\varphi$ also preserves the core ball $B(\rho(t))$, on which $u=v$ by~\eqref{eq:interior:core}, and since $v=\mathcal Pu$ is invariant under $\mathscr R_\varphi$, we have $F_\varphi=v-\mathscr R_\varphi v=0$ on $B(\rho(t))$. The identity theorem on the connected set $B(1)$ then gives $F_\varphi\equiv0$ on $B(1)$. This holds for every $\varphi\in[0,2\pi)$ and every $t\in(-1,0)$. Averaging the identity $u=\mathscr R_\varphi u$ over $\varphi$, and recalling the definition~\eqref{eq:interior:average} of $\mathcal P$, we obtain $u=\mathcal Pu$, that is,
\begin{align}
u=v\,,\qquad w=0
\quad\mbox{on }\mathcal Q
\,.
\label{eq:interior:global:symmetry}
\end{align}
Note that this argument is made at each fixed time, so that the core radius $\rho(t)$ and the analyticity radii may tend to zero as $t\uparrow0$, at unrelated rates.

Next, we average the Navier-Stokes momentum equation~\eqref{eq:nse:forced} with respect to the azimuthal angle. Since, by~\eqref{eq:interior:global:symmetry}, $u$ is fixed by every $\mathscr R_\varphi$, and since $\mathscr R_\varphi$ commutes with $\p_t$, with $\Delta$, and with $u\mapsto(u\cdot\nabla)u$, applying $\mathcal P$ to~\eqref{eq:nse:forced} leaves the three velocity terms unchanged and replaces only the pressure and the force by their angular means
\begin{align*}
\bar\pi(x,t)=\frac1{2\pi}\int_0^{2\pi}\pi(Q_\varphi^{-1}x,t)\,d\varphi
\,,\qquad
\bar f=\mathcal Pf
\,.
\end{align*}
The averaged pressure remains in $L^{3/2}_{x,t}$ (see~\eqref{eq:interior:energy:class}), and~\eqref{eq:interior:force:c-two} gives $\|\bar f\|_{L^\infty_tC_x^2}\le CM_f$. The pair $(u,\bar\pi)$ is suitable with the force $\bar f$: it is smooth on compact subsets of $\mathcal Q$ and solves the averaged equation classically, so the local energy inequality is an identity, with $\bar\pi\in L^{3/2}_{x,t}(\mathcal Q)$ and $\bar f$ bounded.

Thus $(u,\bar\pi,\bar f)$ is an axisymmetric suitable weak solution of~\eqref{eq:nse:forced} on $\mathcal Q$, in the class of Section~\ref{sec:aniso:axisymmetric}, whose force obeys~\eqref{eq:interior:force:c-two} with $CM_f$ in place of $M_f$. Moreover, by~\eqref{eq:interior:global:symmetry} the bounds~\eqref{eq:interior:mean:bounds} on the angular mean are the bounds~\eqref{eq:aniso:bounds} on $u$ itself, with the same exponent $h$ and the same constant $\Caxi$. Theorem~\ref{thm:aniso:main} makes $(0,0)$ a regular point, which is~\eqref{eq:interior:regular}.
\end{proof}

\subsection{Consequences for the force}
\label{sec:reduction:force}
The analyticity of the force enters the proof of Theorem~\ref{thm:main} at exactly one point, the passage from symmetry on the shrinking core~\eqref{eq:interior:core}, to the symmetry~\eqref{eq:interior:global:symmetry} on the fixed ball; for a merely smooth force we do not know how to make that passage. At a singular point, we may restate Theorem~\ref{thm:main} as a statement about the force alone.

\begin{corollary}[{\bf The force under an axisymmetric core}]
\label{cor:interior:nonanalytic}
Let $(u,\pi)$ be a suitable weak solution of~\eqref{eq:nse:forced} on $\mathcal Q$, smooth on compact subsets of $\mathcal Q$, with a force $f$ satisfying~\eqref{eq:interior:force:c-two},  assume~\eqref{eq:interior:mean:bounds} for some $0<h<1/2$  and $\Caxi<\infty$, and assume~\eqref{eq:interior:core}. If $(0,0)$ is a singular point, then $f$ violates~\eqref{eq:interior:force:analytic}, and in particular $f\not\equiv0$ there. Moreover, $f$ then does not vanish identically on any cylinder $B(R')\times(-\delta',0)$ with $0<R'\le1$ and $0<\delta'\le1$.
\end{corollary}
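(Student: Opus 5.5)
The plan is to argue by contraposition from Theorem~\ref{thm:main}. The first assertion is immediate: if $f$ satisfied~\eqref{eq:interior:force:analytic}, then every hypothesis of Theorem~\ref{thm:main} would hold. These are~\eqref{eq:interior:force:c-two},~\eqref{eq:interior:mean:bounds},~\eqref{eq:interior:core}, and suitability with smoothness on compact subsets. The theorem would then make $(0,0)$ regular, contradicting the assumption that it is singular. Since $f\equiv0$ satisfies~\eqref{eq:interior:force:analytic} trivially, with any $M_{R,J}$ and $a_{R,J}$, this also gives $f\not\equiv0$.

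For the localized statement, suppose $f\equiv0$ on $B(R')\times(-\delta',0)$. I would reduce to the unit cylinder by the parabolic rescaling of the footnote in Section~\ref{sec:aniso:setup}. Set $\lambda=\min(R',\sqrt{\delta'})$ and consider $u_\lambda(x,t)=\lambda u(\lambda x,\lambda^2t)$, with $\pi_\lambda$ and $f_\lambda=\lambda^3f(\lambda\cdot,\lambda^2\cdot)$ defined as there. This produces a suitable weak solution on $\mathcal Q$ with $f_\lambda\equiv0$, which is smooth on compact subsets and lies in the class~\eqref{eq:interior:energy:class} by restriction. It also preserves singularity of $(0,0)$.

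Next I check that the hypotheses survive the rescaling. Rotations about the $z$-axis commute with this scaling, so $\mathcal P u_\lambda=(\mathcal Pu)_\lambda$. The bounds~\eqref{eq:interior:mean:bounds} transform as follows: each $r$-derivative gives a factor $\lambda$ against $(\lambda^2)^{-1/2}$, and each $z$-derivative gives $\lambda$ against $(\lambda^2)^{-(1/2-h)}$. Since the exponents of $(-t)$ are weaker than parabolic, the rescaled bounds hold with a constant $C\lambda^{-N}\Caxi$ for some fixed power $N$, which is harmless. The core hypothesis~\eqref{eq:interior:core} holds with $\rho_\lambda(t)=\min(\rho(\lambda^2t)/\lambda,1/2)>0$.

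Theorem~\ref{thm:main} then applies to $(u_\lambda,\pi_\lambda,f_\lambda)$ with zero force and gives regularity of $(0,0)$ for $u_\lambda$, hence for $u$, which is a contradiction. Alternatively, one can avoid rescaling and apply Corollary~\ref{lem:analytic:slice} directly with zero force on $B(R')$. Then repeat the identity-theorem argument of the proof of Theorem~\ref{thm:main} on $B(R')$ to get $w=0$ there. Finally, invoke Theorem~\ref{thm:aniso:main} on the rescaled cylinder.

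The only point needing care is this bookkeeping of constants under rescaling in~\eqref{eq:interior:mean:bounds}. It is routine because those bounds are stated on all of $B(1)$ and only worsen by fixed powers of $\lambda$.
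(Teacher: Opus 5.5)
Your proposal is correct and follows the paper's proof. The first assertion is the contrapositive of Theorem~\ref{thm:main}, using that $f=0$ satisfies~\eqref{eq:interior:force:analytic}. The second restricts to the cylinder where $f\equiv0$, shrinks the core radius to $\min\{\rho(t),R'/2\}$, and returns to $\mathcal Q$ by Navier--Stokes scaling. In your bookkeeping, the only loss under the rescaling is the factor $\lambda^{-2h}$ in the $b=0$ bounds on $v_z$ and $v_\theta$; the $z$-derivative bounds only improve for $\lambda\le1$.
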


\begin{proof}[Proof of Corollary~\ref{cor:interior:nonanalytic}]
The first assertion is the contrapositive of Theorem~\ref{thm:main}, and $f=0$ satisfies~\eqref{eq:interior:force:analytic}. For the second, were $f$ to vanish identically on such a cylinder, the hypotheses of Theorem~\ref{thm:main} would hold there with $\rho(t)$ replaced by $\min\{\rho(t),R'/2\}$, and the Navier-Stokes scaling would make $(0,0)$ regular.
\end{proof}

\begin{remark}[{\bf Upper bounds in place of exact vanishing}]
\label{rem:interior:approximate}
We note that exact axisymmetry on the core may be replaced by a bound on the non-axisymmetric part $w$ of the velocity, on a fixed ball. That is, if in Theorem~\ref{thm:main} we drop~\eqref{eq:interior:core} and~\eqref{eq:interior:force:analytic} and assume instead that 
\begin{align}
M_w:=\sup_{-1<t<0}\norm{w(\cdot,t)}_{C_x^3(B(1))}<\infty
\,,
\label{eq:interior:error:bound}
\end{align}
then $(0,0)$ is still regular. No smallness of $M_w$ is required, and analyticity plays no role. Indeed, averaging~\eqref{eq:nse:forced} over rotations kills the mixed terms $(v\cdot\nabla)w$ and $(w\cdot\nabla)v$, since $v$ is invariant and $\mathcal Pw=0$, and gives
\begin{subequations}
\label{eq:interior:mean:forced}
\begin{align}
\p_tv+(v\cdot\nabla)v-\Delta v+\nabla\bar\pi
&=F+\bar f\,,\\
\div v&=0\,,\\
F&=-\mathcal P\bigl((w\cdot\nabla)w\bigr)\,,
\\
\sup_{-1<t<0}\norm{F(\cdot,t)}_{C_x^2(B(1))}
&\le C M_w^2
\,,
\end{align}
\end{subequations}
where $C$ is a universal constant, and $\bar f=\mathcal Pf$. Averaging does not increase the norms in~\eqref{eq:interior:energy:class}, and $(v,\bar\pi)$ is smooth on compact subsets of $\mathcal Q$, so it is a suitable weak solution of~\eqref{eq:interior:mean:forced} with the force $F+\bar f$, whose $C^2$ norm is at most $CM_w^2+CM_f$. Theorem~\ref{thm:aniso:main} makes $v$ bounded near $(0,0)$, and~\eqref{eq:interior:error:bound} makes $u=v+w$ bounded there as well. We emphasize that the identity theorem uses the exact vanishing in~\eqref{eq:interior:core}, and that we do not know how to replace it by an algebraic rate of smallness.  Section~\ref{sec:properties:criteria} shows that~\eqref{eq:interior:error:bound} fails for the OpenAI construction.
\end{remark}

\begin{remark}[{\bf The symmetry of the curl of the force}]
\label{rem:force:symmetry}
No symmetry of $f$ is assumed in Theorem~\ref{thm:main}, but the hypotheses~\eqref{eq:interior:core} and~\eqref{eq:interior:force:analytic} together imply one: for every $\varphi\in[0,2\pi)$,
\begin{align}
\mathscr R_\varphi(\curl f)=\curl f\,,
\qquad\mbox{on }\mathcal Q
\,.
\label{eq:force:symmetry}
\end{align}
Indeed, once $\mathscr R_\varphi u=u$ is known from the proof of Theorem~\ref{thm:main}, applying $\mathscr R_\varphi$ to the momentum equation gives $f-\mathscr R_\varphi f=\nabla(\pi-\pi\circ Q_\varphi^{-1})$, and~\eqref{eq:force:symmetry} follows upon taking the curl. It would be interesting to decide whether in Theorem~\ref{thm:main} assumption~\eqref{eq:interior:force:analytic} may be replaced by the rotation invariance~\eqref{eq:force:symmetry} of $\curl f$.
\end{remark}

\begin{remark}[{\bf Exterior vanishing and analyticity}]
\label{rem:exterior:nonanalytic}
A second obstruction to the analyticity of the force uses neither~\eqref{eq:interior:mean:bounds} nor~\eqref{eq:interior:core}, only an open set off the axis on which the meridional velocity vanishes. Let $(u,\pi)$ be a solution of~\eqref{eq:nse:forced} on $B(R)\times(-\delta,0)$, smooth on compact subsets of that cylinder, and let $E\subset B(R)\cap\{r>0\}$ be a nonempty open set such that 
\begin{align}
u_r=u_z=0
\qquad\mbox{on }E\times(-\delta,0)
\,.
\label{eq:exterior:vanishing}
\end{align}
Suppose that $u_z(0,t_0)\ne0$ for some $t_0\in(-\delta,0)$.\footnote{For the OpenAI construction this holds at every time sufficiently close to the blowup time, where $u_z(0,t)=j_0(-t)^{-1/2-h}\ne0$ by~\eqref{eq:properties:axis}. The remark thus says that a flow which is pure swirl on an open set off the axis, and has nonzero axial velocity at a point of the axis, cannot arise from a spatially analytic force: analyticity would carry the exterior vanishing of $u_z$ to the axis.} Then, on every ball $B(R')$ with $0<R'\le R$ which meets $E$, and for every $t_1\in(-\delta,t_0)$, there are no constants $M<\infty$ and $a>0$ such that
\begin{align}
\sup_{t\in[t_1,t_0]}\norm{\p_x^\alpha f(\cdot,t)}_{L^\infty(B(R'))}
\le Ma^{-|\alpha|}|\alpha|!
\qquad\mbox{for all }\alpha\in\mathbb N_0^3
\,.
\label{eq:exterior:force:analytic:local}
\end{align}
In particular, $f$ does not vanish identically on $B(R')\times(t_1,t_0)$, since the zero force satisfies~\eqref{eq:exterior:force:analytic:local}.
The proof is short. Suppose that~\eqref{eq:exterior:force:analytic:local} holds for some $M$ and $a$. By Corollary~\ref{lem:analytic:slice}, $u(\cdot,t)$ is then real analytic on $B(R')$ for every $t\in(t_1,t_0)$. At each such $t$, the scalar $u_z(\cdot,t)$ vanishes on the nonempty open set $E\cap B(R')$ by~\eqref{eq:exterior:vanishing}, hence on all of $B(R')$ by the identity theorem, and in particular $u_z(0,t)=0$. Letting $t\uparrow t_0$ and using the continuity of $u_z(0,\cdot)$, we obtain $u_z(0,t_0)=0$, a contradiction.
\end{remark}

\section{Axisymmetric identities and auxiliary results}
\label{sec:aniso:identities}

We now turn to the proof of Theorem~\ref{thm:aniso:main}. Throughout Sections~\ref{sec:aniso:identities}--\ref{sec:aniso:closure}, $(u,\pi)$ is an axisymmetric suitable weak solution of~\eqref{eq:nse:forced} on $\mathcal Q$, in the class of Section~\ref{sec:aniso:axisymmetric}, with a force satisfying~\eqref{eq:interior:force:c-two}; averaging over rotations as in Section~\ref{sec:reduction:core}, we may (and do) take $\pi$ and $f$ axisymmetric as well. This section records a few of the auxiliary results that the proof needs: the equations in cylindrical variables, a maximum principle for functions vanishing on the axis (Lemma~\ref{lem:aniso:axis}), a regular annulus with a bounded circulation (Lemma~\ref{lem:aniso:annulus}), and a comparison principle for the bounded scalars produced by the ancient limits of Section~\ref{sec:aniso:zoom} (Lemma~\ref{lem:aniso:comparison}). Apart from the force, the identities and Lemma~\ref{lem:aniso:annulus} are the same as those of the companion paper~\cite{CIVEulerLength}; the force enters as $(\curl f)_\theta$ in~\eqref{eq:aniso:theta}, as $g=(\curl f)_\theta/r$ in~\eqref{eq:aniso:q}, and as $rf_\theta$ in~\eqref{eq:aniso:circulation:pde}, and~\eqref{eq:interior:force:c-two} bounds all three. We denote by $C$ a constant which may depend on $h$, $M_f$, the constant $\Caxi$ of~\eqref{eq:aniso:bounds}, and fixed interior radii, but never on a sequence of rescaling times.

\subsection{Axisymmetric identities}
\label{sec:aniso:equations}

In the cylindrical frame $(e_r,e_\theta,e_z)$, and with $\pi$ and $f$ axisymmetric, the system~\eqref{eq:nse:forced} reads as
\begin{subequations}
\label{eq:aniso:scalar:nse}
\begin{align}
\p_tu_r+u_r\p_ru_r+u_z\p_zu_r-\frac{u_\theta^2}{r}
&=\left(\p_r^2+\frac1r\p_r+\p_z^2-\frac1{r^2}\right)u_r-\p_r\pi+f_r\,,
\label{eq:aniso:scalar:nse:r}
\\
\p_tu_z+u_r\p_ru_z+u_z\p_zu_z
&=\left(\p_r^2+\frac1r\p_r+\p_z^2\right)u_z-\p_z\pi+f_z\,,
\label{eq:aniso:scalar:nse:z}
\\
\p_tu_\theta+u_r\p_ru_\theta+u_z\p_zu_\theta+\frac{u_r}{r}u_\theta
&=\left(\p_r^2+\frac1r\p_r+\p_z^2-\frac1{r^2}\right)u_\theta+f_\theta\,,
\label{eq:aniso:scalar:nse:swirl}
\\
\p_ru_r+\frac{u_r}{r}+\p_zu_z&=0
\,.
\label{eq:aniso:scalar:nse:div}
\end{align}
\end{subequations}
We call $b=u_re_r+u_ze_z$ the meridional velocity and $u_\theta e_\theta$ the swirl, and we write the vorticity components, the circulation, and the potential vorticity as
\begin{align}
\omega_r=-\p_z u_\theta\,,\qquad
\omega_\theta=\p_z u_r-\p_r u_z\,,\qquad
\omega_z=\p_r u_\theta+\frac{u_\theta}{r}\,,\qquad
\Gamma=ru_\theta\,,\qquad \Omega=\frac{\omega_\theta}{r}
\,.
\label{eq:aniso:vorticity}
\end{align}
Applying $\p_z$ to~\eqref{eq:aniso:scalar:nse:r} and $\p_r$ to~\eqref{eq:aniso:scalar:nse:z} and subtracting, we eliminate the pressure and obtain
\begin{align}
\p_t\omega_\theta+u_r\p_r\omega_\theta+u_z\p_z\omega_\theta
&=\frac{u_r}{r}\omega_\theta+\frac1r\p_z(u_\theta^2)
+\left(\p_r^2+\frac1r\p_r+\p_z^2-\frac1{r^2}\right)\omega_\theta
+(\curl f)_\theta
\,,
\label{eq:aniso:theta}
\end{align}
where $(\curl f)_\theta=\p_zf_r-\p_rf_z$. Dividing~\eqref{eq:aniso:theta} by $r$, and using the identity $r^{-1}(\p_r^2+r^{-1}\p_r-r^{-2})(r\Omega)=(\p_r^2+3r^{-1}\p_r)\Omega$, we also obtain
\begin{align}
\p_t\Omega+u_r\p_r\Omega+u_z\p_z\Omega
&=\left(\p_r^2+\frac3r\p_r+\p_z^2\right)\Omega
+\p_z\left(\frac{u_\theta^2}{r^2}\right)
+g
\,,\qquad
g:=\frac{(\curl f)_\theta}{r}
\,.
\label{eq:aniso:q}
\end{align}
Lastly, upon multiplying the swirl equation~\eqref{eq:aniso:scalar:nse:swirl} by $r$, we obtain
\begin{align}
\p_t\Gamma+u_r\p_r\Gamma+u_z\p_z\Gamma
=\left(\p_r^2-\frac1r\p_r+\p_z^2\right)\Gamma
+rf_\theta
\,.
\label{eq:aniso:circulation:pde}
\end{align}

Near the axis, a smooth axisymmetric field has components $u_r=r\,a(r^2,z,t)$, $u_\theta=r\,c(r^2,z,t)$, and $u_z=d(r^2,z,t)$, with $a$, $c$, and $d$ smooth; thus $\Omega$, $u_\theta/r$, and $g$ are smooth functions of $(r^2,z,t)$, and~\eqref{eq:aniso:q} holds across the axis once its radial operator $\p_r^2+3r^{-1}\p_r$ is read as the radial Laplacian of $\RR^4$, as in~\cite{KNSS09}. Since $u_r$, $\p_zu_r$, $\p_ru_z$, $\p_zf_r$, and $\p_rf_z$ vanish on the axis, the quotients $u_r/r$, $\p_zu_r/r$, $\p_ru_z/r$, and $g$ are bounded, along the radial segment to the axis, by the suprema of $\p_ru_r$, $\p_r\p_zu_r$, $\p_r^2u_z$, $\p_r\p_zf_r$, and $\p_r^2f_z$ on $B(1)$; in particular
\begin{align}
|g|\le CM_f
\qquad\mbox{on }\mathcal Q
\,.
\label{eq:interior:force:quotient}
\end{align}

\subsection{A maximum principle for functions vanishing on the axis}
\label{sec:aniso:axis}

The swirl equation~\eqref{eq:aniso:scalar:nse:swirl} and the circulation equation~\eqref{eq:aniso:circulation:pde} have coefficients which are singular on the axis, and, unlike~\eqref{eq:aniso:q}, they are not lifted to a higher dimension. The proofs of Lemmas~\ref{lem:aniso:annulus} and~\ref{lem:aniso:closure}, and Remark~\ref{rem:interior:meridional}, apply the maximum principle to them in the following form.

\begin{lemma}[{\bf Maximum principle for functions vanishing on the axis}]
\label{lem:aniso:axis}
Let $R>0$, $t_1<s$, $k\in\RR$, and $M\ge0$, and define the set $D=\{(x,t)\colon x\in B(R),\ r=|x'|>0,\ t_1<t\le s\}$. Let $\phi=\phi(r,z,t)$ be continuous on $\overline{B(R)}\times[t_1,s]$, with $\phi(0,z,t)=0$ for $|z|\le R$ and $t_1\le t\le s$. Assume that $\phi$ is of class $C^2$ in $x$ and $C^1$ in $t$ on $D$, and satisfies
\begin{align}
\p_t\phi+b_r\p_r\phi+b_z\p_z\phi+\gamma\phi
=\Bigl(\p_r^2+\frac kr\p_r+\p_z^2\Bigr)\phi+F
\,,
\label{eq:aniso:axis:pde}
\end{align}
there. Here $b_r$, $b_z$, $\gamma$, and $F$ are real-valued functions on $D$, with $\gamma\ge0$ and $|F|\le M$; no bound is assumed on $b_r$, $b_z$, or $\gamma$, which may be unbounded as $r\to0$. Then
\begin{align}
|\phi(x,t)|\le\max_{\Sigma}|\phi|+M(t-t_1)
\qquad\mbox{on }\overline{B(R)}\times[t_1,s]
\,,
\label{eq:aniso:axis:bound}
\end{align}
where $\Sigma=\bigl(\overline{B(R)}\times\{t_1\}\bigr)\cup\bigl(\partial B(R)\times[t_1,s]\bigr)$ is the parabolic boundary of $B(R)\times(t_1,s]$.
\end{lemma}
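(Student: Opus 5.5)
We prove the bound for $\phi$ directly; the bound for $-\phi$ follows since $-\phi$ solves~\eqref{eq:aniso:axis:pde} with $-F$ in place of $F$. We use the standard weak maximum principle with a small perturbation. Set $K=\max_\Sigma|\phi|$. Fix $\eps>0$ and consider
\begin{align*}
\psi_\eps(x,t)=\phi(x,t)-K-(M+\eps)(t-t_1)
\,.
\end{align*}
We show $\psi_\eps\le \eps'$ on $\overline{B(R)}\times[t_1,s]$ for every $\eps'>0$, and then let $\eps,\eps'\to0$.

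\emph{Step 1 (the boundary).} On $\Sigma$ we have $\psi_\eps\le0$. On the axis $\{r=0\}$ we have $\phi=0$, so $\psi_\eps\le -K\le0$ there as well. Thus $\psi_\eps\le0$ on $\Sigma\cup\bigl(\{r=0\}\times[t_1,s]\bigr)$, which is the full "parabolic boundary" of $D$.

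\emph{Step 2 (no interior positive maximum).} Fix $\eps'>0$ and suppose $\max\psi_\eps>\eps'$. By continuity and compactness, the set where $\psi_\eps\ge\eps'$ is compact. It is disjoint from $\Sigma$ and from the axis, since $\psi_\eps\le0$ there. Take the first time $t_*$ at which $\max_x\psi_\eps(\cdot,t_*)=\eps'$, attained at $x_*$. Then $t_*>t_1$, $x_*\in B(R)$, and $r_*>0$, so $(x_*,t_*)\in D$ lies at positive distance from the axis. Near this point all coefficients are finite numbers, even though they may blow up as $r\to0$. At a spatial interior maximum we have
\begin{align*}
\p_r\phi=\p_z\phi=0\,,\qquad \p_r^2\phi\le0\,,\qquad \p_z^2\phi\le0\,,
\end{align*}
because in cylindrical coordinates the point is an interior maximum in $(r,z)$ with $r_*>0$. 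Since $t_*$ is the first time, $\p_t\psi_\eps\ge0$, hence $\p_t\phi\ge M+\eps$. Also $\gamma\phi\ge0$, because $\phi=\psi_\eps+K+(M+\eps)(t-t_1)\ge\eps'>0$ there and $\gamma\ge0$. Plugging into~\eqref{eq:aniso:axis:pde} gives
\begin{align*}
M+\eps\le \p_t\phi+\gamma\phi=\Bigl(\p_r^2+\frac kr\p_r+\p_z^2\Bigr)\phi+F\le F\le M\,,
\end{align*}
which is a contradiction. Letting $\eps,\eps'\to0$ yields $\phi\le K+M(t-t_1)$.

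\emph{The main obstacle.} The only subtle point is that $b$, $\gamma$ and $k/r$ are unbounded near the axis, so classical maximum principles do not apply on $B(R)$. This is handled entirely by the hypothesis $\phi=0$ on the axis: the axis then acts as part of the boundary where the bound already holds. The first-touching point of the level $\eps'>0$ therefore lies strictly off the axis, where every coefficient is finite and only sign information is used.

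One small care is the "first time" argument, which needs continuity of $\phi$ up to $t_1$ and up to the axis; both are assumed. If $t_*=s$, the one-sided derivative $\p_t\phi(x_*,s)$ still satisfies $\p_t\psi_\eps\ge0$, since $\phi$ is $C^1$ in $t$ on $D$, which includes $t=s$. This completes~\eqref{eq:aniso:axis:bound}.
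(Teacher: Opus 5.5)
Your proof is correct and is essentially the paper's argument. You perturb by $(M+\eps)(t-t_1)$, use the vanishing of $\phi$ on the axis to force any violating maximum into $D$, note that the unbounded coefficients there multiply vanishing first derivatives, and conclude from sign information alone. The only difference is cosmetic: you locate the bad point as the first time the level $\eps'$ is reached, whereas the paper takes a global space-time maximum of $\phi-(M+\eps)(t-t_1)$ on the compact cylinder; both give the sign of the left time derivative in the same way.
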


The proof of this lemma is elementary; we present it for the convenience of the reader.

\begin{proof}[Proof of Lemma~\ref{lem:aniso:axis}]
Since $-\phi$ satisfies the same assumptions, with $-F$ in place of $F$, it suffices to bound $\phi$ from above. Let $m=\max_{\Sigma}|\phi|\ge0$ and $\eps>0$. Define $\phi_\eps=\phi-(M+\eps)(t-t_1)$, and let $(\bar x,\bar t)$ be a maximum point of the continuous function $\phi_\eps$ on the compact set $K=\overline{B(R)}\times[t_1,s]$. Suppose that $\phi_\eps(\bar x,\bar t)>m$. On $\Sigma$ we have $\phi_\eps\le\phi\le m$, and on the axis we have $\phi_\eps\le\phi=0\le m$. The maximum point $(\bar x,\bar t)$ therefore lies neither on $\Sigma$ nor on the axis, that is, $(\bar x,\bar t)\in D$. At time $\bar t$ the functions $\phi$ and $\phi_\eps$ differ by a constant, so that $(r,z)\mapsto\phi(r,z,\bar t)$ achieves its  maximum on the open half-disc $\{r>0,\ r^2+z^2<R^2\}$ at $\bar x$. Therefore, $\p_r\phi=\p_z\phi=0$, $\p_r^2\phi\le0$, and $\p_z^2\phi\le0$ there. Moreover, $\gamma\phi\ge0$ at $(\bar x,\bar t)$ because $\phi(\bar x,\bar t)\ge\phi_\eps(\bar x,\bar t)>m\ge0$, and $\p_t\phi_\eps\ge0$ there because $\phi_\eps(\bar x,t)\le\phi_\eps(\bar x,\bar t)$ for $t_1\le t<\bar t$ bounds the left derivative from below. At $(\bar x,\bar t)$ the coefficients $b_r$, $b_z$, and $k/r$ are finite, and they multiply first derivatives which vanish. Thus~\eqref{eq:aniso:axis:pde} gives $\p_t\phi_\eps=\p_t\phi-M-\eps\le F-M-\eps\le-\eps$ there, a contradiction. Therefore $\phi\le m+(M+\eps)(t-t_1)$ on $K$, and~\eqref{eq:aniso:axis:bound} follows by letting $\eps\to0$.
\end{proof}

\subsection{The regular annulus}
\label{sec:aniso:regular}
Lemma~\ref{lem:aniso:annulus} below is used twice in the proof of Theorem~\ref{thm:aniso:main}: its circulation bound~\eqref{eq:aniso:circulation:bound} removes the swirl source in the zoom of Section~\ref{sec:aniso:zoom}, and its derivative bounds on the annulus, which hold up to the blowup time, control the cutoff errors of the enstrophy estimate of Section~\ref{sec:aniso:closure}, which are supported in that annulus.

\begin{lemma}[{\bf A regular annulus and bounded circulation}]
\label{lem:aniso:annulus}
Let $0\le R_1<R_0<1$. There are radii $R_1<R_-<R_+<R_0$ and a time $t_0\in(-1,0)$ with the following property. The velocity $u$, together with $\nabla u$ and $\nabla^2u$, is bounded on the cylindrical annulus $\{R_-<|x|<R_+\}\times(t_0,0)$, and, for each $R_*\in(R_-,R_+)$, there is a finite constant $C_\Gamma$ such that 
\begin{align}
\norm{\Gamma}_{L^\infty(B(R_*)\times(t_0,0))}\le C_\Gamma
\,.
\label{eq:aniso:circulation:bound}
\end{align}
\end{lemma}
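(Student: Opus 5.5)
We will prove the two assertions of Lemma~\ref{lem:aniso:annulus} in order: first we find the regular annulus by partial regularity, then we bound the circulation by the maximum principle of Lemma~\ref{lem:aniso:axis}.

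\emph{Step 1: the regular annulus.} We first locate the singular set. Since $(u,\pi)$ is suitable and smooth on compact subsets of $\mathcal Q$, singular points can occur only at $t=0$. The force $f-\nabla\pi_f$ is bounded and divergence free (footnote~\ref{foot:aniso:pressure}), so the Caffarelli--Kohn--Nirenberg theorem~\cite{CKN} applies. It gives $\mathcal P^1(S)=0$ for the set $S\subset B(1)\times\{0\}$ of singular points. In particular, the one-dimensional Hausdorff measure of $S$ vanishes. Axisymmetry makes $S$ invariant under rotations about the $z$-axis. A rotation-invariant set containing a point off the axis contains a whole circle, which has positive $\mathcal H^1$ measure. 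Hence $S$ lies on the axis, and $S\subset\{(0,z,0)\}$ has zero length.

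Next we choose the annulus. Consider the closed set $\{|x|:x\in S\}\subset[0,1)$. It has zero one-dimensional measure, so its complement in $(R_1,R_0)$ contains an open interval. Pick $R_1<R_-<R_+<R_0$ such that $[R_-,R_+]$ avoids it. Then every point of the compact set $\{R_-\le|x|\le R_+\}\times\{0\}$ is regular. We expect this to be the main obstacle: regularity in the footnote's sense only gives boundedness of $u$, not of $\nabla u$ and $\nabla^2u$.

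\emph{Upgrading boundedness to $C^2$ bounds.} At each such point, local boundedness of $u$ together with $f\in L^\infty_tC^2_x$ gives interior parabolic estimates, for instance by Serrin-type bootstrapping of the vorticity equation. These yield bounds on $\nabla u$ and $\nabla^2u$ on a slightly smaller cylinder up to $t=0$. The pressure is handled by working with the vorticity, or with $u$ after subtracting a harmonic part. By compactness, finitely many such cylinders cover the annulus $\{R_-<|x|<R_+\}\times(t_0,0)$ for some $t_0\in(-1,0)$, after shrinking $R_\pm$ slightly if needed.

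\emph{Step 2: bounded circulation.} Fix $R_*\in(R_-,R_+)$ and apply Lemma~\ref{lem:aniso:axis} to $\phi=\Gamma$ on $B(R_*)\times(t_0,s]$, for each $s<0$. The circulation equation~\eqref{eq:aniso:circulation:pde} has the form~\eqref{eq:aniso:axis:pde} with $k=-1$, $\gamma=0$, $b=(u_r,u_z)$, and $F=rf_\theta$. The source obeys $|F|\le M_f$ by~\eqref{eq:interior:force:c-two}. The function $\Gamma$ is continuous on $\overline{B(R_*)}\times[t_0,s]$ because $u$ is smooth there, it is $C^2$ off the axis, and it vanishes on the axis. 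The parabolic boundary of this cylinder splits into two parts:
\begin{itemize}[leftmargin=2em]
\item[(a)] On the bottom $\{t=t_0\}$, $\Gamma$ is bounded by smoothness at $t_0<0$.
\item[(b)] On the lateral boundary $\{|x|=R_*\}\times[t_0,s]$, we have $|\Gamma|\le R_*|u|$, which is bounded by Step~1 uniformly in $s$.
\end{itemize}
Lemma~\ref{lem:aniso:axis} then gives
\[
|\Gamma|\le\max_\Sigma|\Gamma|+M_f(s-t_0)
\]
on $B(R_*)\times(t_0,s]$, with a right-hand side independent of $s$. Letting $s\uparrow0$ yields~\eqref{eq:aniso:circulation:bound}.
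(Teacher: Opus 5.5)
Your proof is correct and follows the paper's route: partial regularity gives a shell $\{R_-\le|x|\le R_+\}$ with no singular points at $t=0$, Serrin's interior estimates (one-sided in time, hence uniform up to $t=0$) upgrade the bound on $u$ to bounds on $\nabla u$ and $\nabla^2u$, and Lemma~\ref{lem:aniso:axis} is applied to $\Gamma$ with $k=-1$, $\gamma=0$, $F=rf_\theta$. Two small remarks. First, the CKN theorem concerns singular points in the interior of $\mathcal Q$, whereas your set $S$ lies on the top boundary $t=0$, so you need an $\eps$-regularity criterion formulated on backward cylinders; the paper uses the one of Gustafson--Kang--Tsai~\cite{GKT07} at that step. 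Second, your rotation-invariance argument placing $S$ on the axis is valid but unnecessary, since the $1$-Lipschitz map $x\mapsto|x|$ already sends an $\mathcal H^1$-null set to a null set.
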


\begin{proof}[Proof of Lemma~\ref{lem:aniso:annulus}]
Let $S_0\subset\overline{B(R_0)}$ be the set of $x$ for which $(x,0)$ is a singular point. Applied to the pair with the divergence-free force of footnote~\ref{foot:aniso:pressure}, partial regularity gives $\mathcal H^1(S_0)=0$. Indeed, the $\eps$-regularity criterion of Gustafson, Kang, and Tsai~\cite[Theorem~1.1(ii)]{GKT07}, with $(p,q)=(2,2)$ and the force in the parabolic Morrey space $M^{2,2}$ of~\cite[Equation~(6)]{GKT07}, applies at the points $(x,0)$ of the top boundary of $\mathcal Q$.\footnote{In~\cite{GKT07} the criterion is stated for points in the interior of the space-time domain, whereas here it is applied at points of the top boundary, at the blowup time. This is legitimate for two reasons. First, the proof in~\cite{GKT07} uses the solution only on the backward cylinders $B(x,\varrho)\times(-\varrho^2,0)$, which lie inside $\mathcal Q$, and only through the equations and the local energy inequality on them. Second, the local energy inequality at a time $s<0$ involves the test function only on $(-1,s]$, so it holds for test functions which do not vanish at $t=0$; for a pair smooth on compact subsets of $\mathcal Q$ it is in fact an identity at every $s<0$.} The covering argument of~\cite{CKN} (see also~\cite{Lin98,LadyzhenskayaSeregin99,Kukavica08}) is unchanged, since $\int_{B(1)\times(-\delta,0)}|\nabla u|^2\to0$ as $\delta\to0$ by~\eqref{eq:interior:energy:class}. We may therefore choose $R\in(R_1,R_0)$ such that $\partial B(R)$ carries no point of $S_0$, and by compactness there are $\delta>0$, with $R_1<R-\delta<R+\delta<R_0$, and $t_0\in(-1,0)$ such that $u$ is bounded on $\{R-\delta<|x|<R+\delta\}\times(t_0,0)$. We fix any $R_-$ and $R_+$ such that $R-\delta<R_-<R_+<R+\delta$.

Since $u$ is bounded on this cylindrical annulus and $f$ is bounded in $C^2_x$, the interior estimates of Serrin~\cite[Section~4, with $m=1$]{Serrin62} bound $\nabla u$ and $\nabla^2u$ on compact subsets of $\{R-\delta<|x|<R+\delta\}\times(t_0,0)$, by constants which depend only on the subset, on $M_f$, on $\sup|u|$ over the cylindrical annulus, and on $\|\nabla u\|_{L^2(\mathcal Q)}$. These bounds are uniform up to $t=0$, because Serrin's representation formulas~\cite[Equation~(18), Equation~(20), and the display after Equation~(23)]{Serrin62} are one-sided in time: the vorticity at a point $(x,s)$ is the sum of a heat potential of $\omega\wedge u$ and of the force, and of a solution of the heat equation, each bounded at $(x,s)$ in terms of the solution on the backward cylinder $B(x,\varrho)\times(s-\varrho^2,s)$, so that the estimate at $(x,s)$ depends on $s$ only through the norms above. Fixing a small $\varrho$, with $\varrho^2<-t_0$, and replacing $t_0$ by $t_0+\varrho^2$ gives the first assertion.

We fix $R_*\in(R_-,R_+)$. For $s\in(t_0,0)$, the set $\overline{B(R_*)}\times[t_0,s]$ is a compact subset of $\mathcal Q$, so that $u$ is smooth there; the circulation $\Gamma=ru_\theta$ is therefore continuous on this set, smooth off the axis, and, since $|\Gamma|\le r|u|$, it vanishes on the axis. The circulation equation~\eqref{eq:aniso:circulation:pde} is of the form~\eqref{eq:aniso:axis:pde}, with $k=-1$, with no zeroth order term, and with a source which obeys $|rf_\theta|\le M_f$. Lemma~\ref{lem:aniso:axis} therefore bounds $|\Gamma|$ on $\overline{B(R_*)}\times[t_0,s]$ by its values at time $t_0$ and on $\partial B(R_*)\times[t_0,s]$, plus $M_f|t_0|$. The former are bounded since $u$ is smooth at time $t_0$, the latter by $R_*$ times the bound on $u$ on the cylindrical annulus; neither depends on $s$ and letting $s\uparrow0$ gives~\eqref{eq:aniso:circulation:bound}.
\end{proof}

\subsection{A comparison principle with diffusion in fewer variables}
\label{sec:aniso:maximum}

In the ancient limits of Section~\ref{sec:aniso:zoom} the radial diffusion survives the zoom-in procedure, so that the potential vorticity is no longer conserved along backward meridional trajectories, as it is in~\cite{CIV26} and~\cite{CIVEulerLength}. A comparison principle is instead used to make the limiting scalar vanish. That scalar, however, is merely bounded, it solves its equation only in the sense of distributions, and it comes with no bound on its derivatives and no initial trace. Moreover, the diffusion acts in a subset of the variables only (it does not act in the axial variable), so Lemma~\ref{lem:aniso:comparison} is stated in the generality in which it is used.

\begin{lemma}[{\bf Comparison for a bounded distributional solution}]
\label{lem:aniso:comparison}
Let $1\le d\le m$, let $I\subset\RR$ be an open interval, and denote $x=(X,Z)\in\RR^d\times\RR^{m-d}$. Suppose that $B\colon\RR^m\times I\to\RR^m$ is measurable, that $B(\cdot,\tau)\in W^{1,\infty}(\RR^m;\RR^m)$ for almost every $\tau\in I$, and that
\begin{align}
\Lambda_J:=\esssup_{\tau\in J}
\big(\norm{B(\cdot,\tau)}_{L^\infty(\RR^m)}
+\norm{\nabla B(\cdot,\tau)}_{L^\infty(\RR^m)}\big)<\infty
\,,
\label{eq:aniso:comparison:drift}
\end{align}
for every compact interval $J\subset I$. 
Let $q\in L^\infty_{\rm loc}(I;L^\infty(\RR^m))$ satisfy
\begin{align}
\p_\tau q+B\cdot\nabla q=\Delta_Xq
\qquad\mbox{on }\RR^m\times I
\,,
\label{eq:aniso:comparison:equation}
\end{align}
in the sense of distributions.\footnote{Here $B\cdot\nabla q$ really means $\div(Bq)-(\div B)q$.} Then, for almost every $\tau_0\in I$ and almost every $\tau\in I$ with $\tau>\tau_0$,
\begin{align}
\norm{q(\cdot,\tau)}_{L^\infty(\RR^m)}
\le\norm{q(\cdot,\tau_0)}_{L^\infty(\RR^m)}
\,.
\label{eq:aniso:comparison:conclusion}
\end{align}
If in addition $I=(-\infty,T)$ and, for some $C<\infty$ and $\kappa>0$,
\begin{align}
\norm{q(\cdot,\tau)}_{L^\infty(\RR^m)}\le C|\tau|^{-\kappa}
\,,
\label{eq:aniso:ancient:decay}
\end{align}
for almost every $\tau<\min\{-1,T\}$, 
then $q=0$ almost everywhere on $\RR^m\times I$. A continuous representative of $q$ on an open subset vanishes there, and so does any continuous trace at the upper endpoint obtained from that representative.
\end{lemma}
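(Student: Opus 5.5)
Our plan is a duality (adjoint) argument, which is the natural approach for merely bounded distributional solutions without initial trace. The goal is \eqref{eq:aniso:comparison:conclusion} in the form
\[
\int q(x,\tau)\psi(x)\,dx\le\norm{q(\cdot,\tau_0)}_{L^\infty}\norm{\psi}_{L^1}
\]
for all $\psi\in C^\infty_c(\RR^m)$, at Lebesgue points $\tau_0<\tau$ of $s\mapsto q(\cdot,s)$ (in the weak-$*$ sense, tested against a countable dense family).

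For fixed $\psi$ and $\tau$ we solve the backward adjoint problem on $(\tau_0,\tau)$:
\[
\p_s\varphi+\div(B\varphi)+\Delta_X\varphi=0,\qquad \varphi(\cdot,\tau)=\psi,
\]
which is equivalent to $-\p_s\varphi=B\cdot\nabla\varphi+(\div B)\varphi+\Delta_X\varphi$. This equation is degenerate in $Z$, so we regularize.
\begin{itemize}
\item Mollify $B$ in space (and in time) to $B_\eps$, keeping the bounds \eqref{eq:aniso:comparison:drift}.
\item Add $\eps\Delta_Z$ to the adjoint equation.
\end{itemize}
The regularized problem has a smooth solution $\varphi_\eps$ with the following properties.
\begin{itemize}
\item It has Gaussian-type decay. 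It is nonnegative if $\psi\ge0$; we split $\psi=\psi^+-\psi^-$.
\item It conserves mass, $\int\varphi_\eps(s)\,dx=\int\psi\,dx$, because the operator is in divergence form.
\item It has the uniform bound $\norm{\nabla\varphi_\eps(s)}_{L^1}\le e^{C\Lambda_J(\tau-s)}\norm{\nabla\psi}_{L^1}$. This comes from differentiating the equation: the commutator terms involve only $\nabla B$ and $\nabla\div B$. The latter is problematic, since only $W^{1,\infty}$ is assumed.
\end{itemize}
To avoid second derivatives of $B$, we instead write the adjoint in the non-divergence form acting on $q$. That is, we use $\varphi$ solving $-\p_s\varphi=\div(B\varphi)+\Delta_X\varphi$ and only need an $L^1$ bound together with an $L^1$ bound on $\varphi$ times a modulus of continuity. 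The pairing error $\int\!\!\int q\,\div((B-B_\eps)\varphi_\eps)$ is controlled by $\norm{q}_{L^\infty}\bigl(\norm{B-B_\eps}_{L^\infty}\norm{\nabla\varphi_\eps}_{L^1}+\norm{\div(B-B_\eps)}_{L^1_{loc}}\ldots\bigr)$. Here $\div B_\eps\to\div B$ in $L^1_{\rm loc}$, and we need tightness of $\varphi_\eps$ uniformly, which comes from a first-moment bound. The $\eps\Delta_Z$ error is $\eps\int\!\!\int q\,\Delta_Z\varphi_\eps$, bounded by $\eps\norm{q}_\infty\norm{\nabla\varphi_\eps}_{L^1}\cdot\norm{\nabla_Z\varphi_\eps}$. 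To handle it cleanly, we use the uniform $W^{1,1}$ bound: $\eps\int|\Delta_Z\varphi_\eps|$ is not bounded directly, but integrating by parts once gives $\eps\int\nabla_Z q\cdot\nabla_Z\varphi_\eps$. This is not admissible either, so we also mollify $q$ in $Z$ and use the DiPerna--Lions commutator lemma, whose commutator tends to zero in $L^1_{\rm loc}$ since $B\in W^{1,\infty}$. Testing the mollified equation against $\varphi_\eps$ (a legitimate test function after time mollification at Lebesgue points) gives
\[
\int q(\tau)\psi=\int q(\tau_0)\varphi_\eps(\tau_0)+o(1),
\]
and then $\bigl|\int q(\tau_0)\varphi_\eps(\tau_0)\bigr|\le\norm{q(\tau_0)}_\infty\norm{\psi}_{L^1}$ by positivity and mass conservation. \textbf{The main obstacle} is this step, handling the regularization errors uniformly. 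It is the only delicate point, and the plan is to lean on DiPerna--Lions commutator estimates together with the uniform $W^{1,1}$ and moment bounds for $\varphi_\eps$.

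The ancient statement then follows at once. Given \eqref{eq:aniso:ancient:decay}, for a.e.\ $\tau\in I$ and a.e.\ $\tau_0\to-\infty$ we get $\norm{q(\tau)}_\infty\le C|\tau_0|^{-\kappa}\to0$, so $q=0$ a.e. A continuous representative that vanishes a.e.\ on an open set vanishes identically there, and so does any continuous trace at the upper endpoint, by taking the limit.
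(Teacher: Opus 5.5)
The gap is the bound you end up leaning on: an estimate for $\norm{\nabla\varphi_\eps(s)}_{L^1}$ that is uniform in $\eps$. You correctly noticed that differentiating the adjoint equation produces the term $(\p_k\div B_\eps)\varphi_\eps$. Your workaround does not remove it, since it still uses $\norm{\nabla\varphi_\eps}_{L^1}$, both in the drift error and in the $\eps\Delta_Z$ error. That term has size $\Lambda_J\eps^{-1}$, so the honest estimate is only $\norm{\nabla\varphi_\eps(s)}_{L^1}\les\norm{\nabla\psi}_{L^1}+\eps^{-1}\norm{\psi}_{L^1}$. With it, $\norm{q}_{L^\infty}\norm{B-B_\eps}_{L^\infty}\norm{\nabla\varphi_\eps}_{L^1}$ is $O(1)$ rather than $o(1)$. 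No better bound exists. In the $Z$-directions the adjoint is a continuity equation whose velocity has merely bounded divergence, and its solutions need not even have bounded variation. For instance, take $m=2$, $d=1$, $B=(0,b(Z))$ with $b(Z)=Z^2\sin(1/Z)$ near $Z=0$, and $\psi=\psi_1(X)\psi_2(Z)$ with $\psi_2(0)\ne0$. The $Z$-factor of the limiting adjoint solution is $\psi_2(Z_0)\,b(Z_0)/b(Z)$, where $Z_0$ is obtained by flowing $Z$ along $b$ for time $\tau-s$. It is continuous and takes the value $\psi_2(1/(k\pi))\,e^{(-1)^{k+1}(\tau-s)}$ at the zero $Z=1/(k\pi)$ of $b$. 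It therefore has infinite variation near $Z=0$, and no bound on $\norm{\p_Z\varphi_\eps(s)}_{L^1}$ uniform in $\eps$ can hold. As written, the regularization errors are not shown to vanish, and the argument does not close.

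The missing idea is to move every regularization error onto a mollified $q$, and to take the limits in the right order.
\begin{itemize}
\item Mollify $q$ in all spatial variables, not only in $Z$, at a second scale $\delta$. Then $\p_\tau q_\delta+B\cdot\nabla q_\delta-\Delta_Xq_\delta=\mathcal R_\delta$ with the DiPerna--Lions commutator $\mathcal R_\delta$, and you pair this equation with $\varphi_\eps$.
\item Integrating by parts onto $q_\delta$, the errors become $\int\!\!\int\nabla q_\delta\cdot(B_\eps-B)\varphi_\eps$ and $\eps\int\!\!\int(\Delta_Zq_\delta)\varphi_\eps$. They are of sizes $C\eps\delta^{-1}$ and $C\eps\delta^{-2}$ times $\norm{q}_{L^\infty}\norm{\psi}_{L^1}$, if $B$ is mollified in space only so that $\norm{B_\eps-B}_{L^\infty}\le C\eps\Lambda_J$. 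Both vanish as $\eps\to0$ at fixed $\delta$.
\item The remaining term $\int\!\!\int\mathcal R_\delta\varphi_\eps$ tends to zero as $\delta\to0$, uniformly in $\eps$. Indeed, $\mathcal R_\delta$ is bounded by $C\Lambda_J\norm{q}_{L^\infty}$ and tends to zero in $L^1_{\rm loc}$. Meanwhile $\varphi_\eps$ is bounded in $L^\infty$ by $e^{C\Lambda_J(\tau-s)}\norm{\psi}_{L^\infty}$ (maximum principle for the nondivergence form of the adjoint), and is uniformly tight by your first moment bound.
\item For $\psi\ge0$, positivity and conservation of mass then give $\int q(\tau)\psi\le\norm{q(\tau_0)}_{L^\infty}\norm{\psi}_{L^1}$ at Lebesgue points, with no derivative bound on $\varphi_\eps$ at all.
\end{itemize}
Repaired this way, your duality argument is a valid alternative to the paper's proof, but it costs an existence theory for the regularized adjoint together with uniform $L^\infty$ and moment bounds. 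The paper never builds an adjoint solution. It compares the mollified $q$ directly with the supersolution $\Psi_\sigma=M+\sigma(\langle x\rangle+K(\tau-\tau_0))$, $K=\Lambda_J+d$, and applies Gr\"onwall to the $L^2$ norm of the compactly supported $(q_\eps-\Psi_\sigma)_+$. The same commutator is its only nontrivial input. Your deduction of the ancient statement is the same as the paper's and is fine.
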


\begin{proof}[Proof of Lemma~\ref{lem:aniso:comparison}]
The argument is standard: we mollify in space, control the commutator as in~\cite{DiPernaLions89}, and compare with a barrier which grows linearly at infinity. 
We fix a compact interval $J=[\tau_-,\tau_+]\subset I$, set $M_\infty=\|q\|_{L^\infty(\RR^m\times J)}$, and mollify in space, $q_\eps=\eta_\eps*q$ with a standard mollifier $\eta_\eps=\eps^{-m}\eta(\cdot/\eps)$ with $\eta$ supported in $B(1)$. Convolving~\eqref{eq:aniso:comparison:equation} with $\eta_\eps$, we obtain
\begin{align}
\p_\tau q_\eps+B\cdot\nabla q_\eps
=\Delta_Xq_\eps+\mathcal R_\eps
\,,\qquad
\mathcal R_\eps=B\cdot\nabla(\eta_\eps*q)-\eta_\eps*(B\cdot\nabla q)
\,.
\label{eq:aniso:comparison:mollified}
\end{align}
Here $\mathcal R_\eps$ is the commutator of DiPerna and Lions, bounded in the usual way: $|\mathcal R_\eps|\le C(m,\eta)\Lambda_JM_\infty$ by~\eqref{eq:aniso:comparison:drift}. Moreover, $\mathcal R_\eps(\cdot,\tau)\to0$ in $L^2_{\rm loc}(\RR^m)$ for almost every $\tau\in J$ by~\cite[Lemma~II.1]{DiPernaLions89}, hence $\mathcal R_\eps\to0$ in $L^2(B(R)\times J)$ for every $R>0$ by dominated convergence. For a fixed $\eps>0$, every term of~\eqref{eq:aniso:comparison:mollified} other than $\p_\tau q_\eps$ is bounded, so $q_\eps$ has a representative which is Lipschitz in time with values in $L^\infty(\RR^m)$; from now on $q_\eps$ denotes this representative. If $\tau_0$ is a Lebesgue point of $\tau\mapsto q(\cdot,\tau)\in L^1_{\rm loc}(\RR^m)$, which is the case for almost every $\tau_0$, then $q_\eps(\cdot,\tau_0)=\eta_\eps*q(\cdot,\tau_0)$ for every $\eps$. We fix such a $\tau_0\in(\tau_-,\tau_+)$ and set $M=\|q(\cdot,\tau_0)\|_{L^\infty(\RR^m)}$, so that $q_\eps(\cdot,\tau_0)\le M$.

With $\langle x\rangle=(1+|x|^2)^{1/2}$ we have $|\nabla\langle x\rangle|\le1$ and $\Delta_X\langle x\rangle\le d$. Upon defining $K=\Lambda_J+d$, the function
\begin{align}
\Psi_\sigma(x,\tau)
=M+\sigma\big(\langle x\rangle+K(\tau-\tau_0)\big)
\,,\qquad\sigma>0
\,,
\label{eq:aniso:comparison:barrier}
\end{align}
satisfies $\p_\tau\Psi_\sigma+B\cdot\nabla\Psi_\sigma-\Delta_X\Psi_\sigma\ge\sigma(K-\Lambda_J-d)=0$ almost everywhere on $\RR^m\times J$, and $\Psi_\sigma\ge M+\sigma|x|$ for $\tau\ge\tau_0$. Hence $v_\eps=(q_\eps-\Psi_\sigma)_+$ vanishes at $\tau=\tau_0$; for $\tau\in[\tau_0,\tau_+]$ it also vanishes on the complement of $B(R_\sigma)$ for $R_\sigma=1+M_\infty/\sigma$. Subtracting the supersolution inequality from~\eqref{eq:aniso:comparison:mollified}, multiplying by $v_\eps$, and integrating over $\RR^m$, we obtain
\begin{align}
\frac12\frac{d}{d\tau}\int v_\eps^2
+\int|\nabla_Xv_\eps|^2
\le\frac12\int(\div B)v_\eps^2
+\int\mathcal R_\eps v_\eps
\,.
\label{eq:aniso:comparison:positive:energy}
\end{align}
Appealing to Gr\"onwall's inequality we obtain $\sup_{\tau\in[\tau_0,\tau_+]}\|v_\eps(\cdot,\tau)\|_{L^2}^2\le C_J\|\mathcal R_\eps\|_{L^2(B(R_\sigma)\times J)}^2\to0$ as $\eps\downarrow0$. Since $q_\eps\to q$ in $L^2_{\rm loc}$, letting $\eps\downarrow0$ and then $\sigma\downarrow0$ gives $q\le M$ almost everywhere on $\RR^m\times(\tau_0,\tau_+)$. The same argument for $-q$, and an exhaustion of $I$ by compact intervals, give~\eqref{eq:aniso:comparison:conclusion}.

For the last assertion, we apply~\eqref{eq:aniso:comparison:conclusion} at times $\tau_j\to-\infty$ at which~\eqref{eq:aniso:ancient:decay} holds, and obtain $\|q(\cdot,\tau)\|_{L^\infty(\RR^m)}\le C|\tau_j|^{-\kappa}$ for almost every $\tau>\tau_j$; thus $q=0$ almost everywhere, and a continuous representative, together with its trace at the upper endpoint, vanishes identically.
\end{proof}

\section{Ancient limits and smallness of the meridional quantities}
\label{sec:aniso:zoom}
The goal of this section is to prove Proposition~\ref{prop:aniso:small}. We argue by contradiction, zooming in where~\eqref{eq:aniso:small} fails, at the radial length $\ell_r$ and the axial length $\ell_z$. Two cases then arise, according to whether the rescaled axis stays at finite distance, or recedes to infinity. Since the pressure is not controlled after the zoom, we pass to the limit in the vorticity equations~\eqref{eq:aniso:theta} and~\eqref{eq:aniso:q}, not in the momentum equation. In both cases the rescaling makes the bounded circulation subcritical, and this removes the swirl source. The radial diffusion, however, survives the rescaling, so that the limiting vorticity vanishes by the comparison principle of Lemma~\ref{lem:aniso:comparison}, not by transport along characteristics. Incompressibility and the boundedness of the limiting meridional velocity then make the selected quantities vanish in the limit, and this contradicts the choice of the points at which we zoom in.

\begin{proof}[Proof of Proposition~\ref{prop:aniso:small}]
Fix an arbitrary $0<\rho<1$. By~\eqref{eq:aniso:bounds} the fourth term in~\eqref{eq:aniso:G} is bounded as $(-t)|\p_zu_r|\le\Caxi(-t)^h$, and thus tends to zero in the limit.

Suppose by contradiction that~\eqref{eq:aniso:small} fails. Then there exist $c_0>0$, times $t_n\uparrow0$, and points $x_n\in B(\rho)$ at which the other three terms satisfy
\begin{align}
(-t_n)\Bigl(|\p_ru_r|+\Bigl|\frac{u_r}{r}\Bigr|+|\p_zu_z|\Bigr)(x_n,t_n)\ge c_0
\,.
\label{eq:aniso:zoom:selected}
\end{align}
By rotation invariance we may without loss of generality take $x_n=(r_n,0,z_n)$, where $r_n = |x_n'|\geq 0$. By Lemma~\ref{lem:aniso:annulus}, applied with $R_1=\rho$ and some $R_0\in(\rho,1)$, there are $\rho<R_*<1$, $t_*\in(-1,0)$, and $C_\Gamma<\infty$ such that
\begin{align}
|ru_\theta(x,t)|\le C_\Gamma
\qquad\mbox{on}\qquad B(R_*)\times(t_*,0)
\,.
\label{eq:aniso:zoom:circulation}
\end{align}
Throughout the proof, $C$ denotes a constant which may depend on $h$, $M_f$, $\Caxi$, $\rho$, $R_*$, $C_\Gamma$, and on the compact time interval under consideration, but never on $n$.

\emph{Step 1: the rescaling.}
We define
\begin{align}
\lambda_n=(-t_n)^{1/2}
\,,\qquad
\delta_n=(-t_n)^h = \lambda_n^{2h}
\,,\qquad
\mu_n=\frac{\lambda_n}{\delta_n}=(-t_n)^{1/2-h}
=\lambda_n^{1-2h}
\,.
\label{eq:aniso:zoom:lengths}
\end{align}
That is, $\lambda_n=\ell_r(t_n)$ and $\mu_n=\ell_z(t_n)$ are the lengths~\eqref{eq:aniso:lengths}, and both vanish as $n\to\infty$ since $0<h<1/2$. Their ratio is $\delta_n=\lambda_n/\mu_n\to0$. Upon passing to a subsequence, we have that $r_n/\lambda_n$ either stays bounded or tends to infinity. Every subsequence extracted in this proof refines the previous ones, and we do not relabel it; since~\eqref{eq:aniso:zoom:selected} holds for every $n$, it holds along every subsequence. We first zoom in about the point $(0,z_n)$ on the axis, in the variables
\begin{align}
R=\frac{r}{\lambda_n}
\,,\qquad
Z = \frac{z-z_n}{\mu_n}
\,,\qquad
\tau = \frac{t}{(-t_n)}
\,,
\label{eq:aniso:zoom:finite:variables}
\end{align}
so that $\tau=-1$ is the selected time $t_n$ and $\tau\le-1$ is its past. We then define
\begin{subequations}
\label{eq:aniso:zoom:fields}
\begin{align}
V_n(R,Z,\tau)&=\lambda_n\,u_r\bigl(\lambda_nR,\,z_n+\mu_nZ,\,(-t_n)\tau\bigr)
\,, \\
W_n(R,Z,\tau)&=\lambda_n\delta_n\,u_z\bigl(\lambda_nR,\,z_n+\mu_nZ,\,(-t_n)\tau\bigr)
\,, \\
S_n(R,Z,\tau)&=\lambda_n\delta_n\,u_\theta\bigl(\lambda_nR,\,z_n+\mu_nZ,\,(-t_n)\tau\bigr)
\label{eq:aniso:zoom:fields:c}
\,, \\
\Omega_n(R,Z,\tau)&=\lambda_n^3\delta_n\,\Omega\bigl(\lambda_nR,\,z_n+\mu_nZ,\,(-t_n)\tau\bigr)
\,,
\end{align}
\end{subequations}
where the cylindrical components on the right are written as functions of $(r,z,t)$. In these variables the bounds~\eqref{eq:aniso:bounds} read as
\begin{subequations}
\label{eq:aniso:zoom:derivatives}
\begin{align}
|\p_R^a\p_Z^bV_n|
&\le \Caxi|\tau|^{-(1+a)/2-(1/2-h)b}
\,,\\
|\p_R^a\p_Z^bW_n|+|\p_R^a\p_Z^bS_n|
&\le \Caxi|\tau|^{-1/2-h-a/2-(1/2-h)b}
\,,
\end{align}
\end{subequations}
for all $a+b\le2$.

These bounds hold at every $(R,Z,\tau)$ for which the point $(\lambda_nR,\,z_n+\mu_nZ,\,(-t_n)\tau)$ lies in $\mathcal Q$, and we call the set of such $(R,Z,\tau)$ the preimage of $\mathcal Q$ under the zoom-in~\eqref{eq:aniso:zoom:finite:variables}. The circulation bound~\eqref{eq:aniso:zoom:circulation} holds on the preimage of the smaller set $B(R_*)\times(t_*,0)$. Since the lengths~\eqref{eq:aniso:zoom:lengths} vanish and $|z_n|<\rho<R_*$, both preimages contain any given compact subset of $[0,\infty)\times\RR\times(-\infty,0)$ for all $n$ large depending on that subset.

Changing variables in the divergence equation~\eqref{eq:aniso:scalar:nse:div}, in the definitions~\eqref{eq:aniso:vorticity}, and in the potential vorticity equation~\eqref{eq:aniso:q}, we obtain the identities
\begin{subequations}
\label{eq:aniso:zoom:finite}
\begin{align}
\p_RV_n+\frac{V_n}{R}+\p_ZW_n&=0
\,,\qquad
\Omega_n=\frac{\delta_n^2\p_ZV_n-\p_RW_n}{R}
\,,
\label{eq:aniso:zoom:finite:identities}\\
\p_\tau \Omega_n+V_n\p_R\Omega_n+W_n\p_Z\Omega_n
&=\left(\p_{RR}+\frac3R\p_R+\delta_n^2\p_{ZZ}\right)\Omega_n
+\p_ZF_n+G_n
\,,\qquad F_n=\frac{S_n^2}{R^2}
\,,
\label{eq:aniso:zoom:finite:equation}
\end{align}
\end{subequations}
at every point of the preimage of $\mathcal Q$, with $R>0$.
Note that the radial diffusion has coefficient one, while the axial diffusion carries the factor $\delta_n^2\to0$.

We record both force terms at once: the force enters~\eqref{eq:aniso:zoom:finite:equation} through $G_n$, and the azimuthal vorticity equation~\eqref{eq:aniso:zoom:receding:equation} of \emph{Step~3} through $H_n$, where
\begin{subequations}
\label{eq:interior:force:scaled}
\begin{align}
G_n(R,Z,\tau)&=\lambda_n^5\delta_n\;g\bigl(\lambda_nR,\,z_n+\mu_nZ,\,(-t_n)\tau\bigr)\,,
\\
H_n(R,Z,\tau)&=\lambda_n^4\delta_n\;(\curl f)_\theta\bigl(r_n+\lambda_nR,\,z_n+\mu_nZ,\,(-t_n)\tau\bigr)
\label{eq:interior:force:scaled:b}
\,.
\end{align}
\end{subequations}
The prefactors are the normalizations $\lambda_n^3\delta_n$ of $\Omega$ in~\eqref{eq:aniso:zoom:fields} and $\lambda_n^2\delta_n$ of $\omega_\theta$ in~\eqref{eq:aniso:zoom:receding:vorticity}, multiplied by the time factor $(-t_n)=\lambda_n^2$. Since $|g|\le CM_f$ on $\mathcal Q$ by~\eqref{eq:interior:force:quotient} and $|(\curl f)_\theta|\le CM_f$ by~\eqref{eq:interior:force:c-two}, both terms tend to zero uniformly: $G_n$ on the preimage of $\mathcal Q$ under~\eqref{eq:aniso:zoom:finite:variables}, up to the axis, and $H_n$ on the preimage of $\mathcal Q$ under~\eqref{eq:aniso:zoom:receding:variables}.

Since $\p_ZV_n$ and $\p_RW_n$ vanish at $R=0$, as $\p_z u_r$ and $\p_r u_z$ do on the axis (Section~\ref{sec:aniso:equations}), the quotients $\p_Z V_n/R$ and $\p_R W_n/R$ are bounded by the suprema of $\p_{RZ}V_n$ and $\p_{RR}W_n$ on the radial segment which joins $(0,Z,\tau)$ to $(R,Z,\tau)$; this segment again lies in the preimage of $\mathcal Q$, since $B(1)$ is convex. Thus~\eqref{eq:aniso:zoom:finite:identities} and~\eqref{eq:aniso:zoom:derivatives} give, on the preimage of $\mathcal Q$ with $R>0$,
\begin{align}
|\Omega_n|
\le \Caxi\left(\delta_n^2|\tau|^{-3/2+h}+|\tau|^{-3/2-h}\right)
\,.
\label{eq:aniso:zoom:finite:bound}
\end{align}
The bound holds at $R=0$ as well, since $\Omega_n$ is continuous across the axis. Note that the constant $\Caxi$ in~\eqref{eq:aniso:zoom:finite:bound} is the one of~\eqref{eq:aniso:zoom:derivatives}, which depends neither on $n$ nor on a time interval; this is what the decay hypothesis~\eqref{eq:aniso:ancient:decay} of Lemma~\ref{lem:aniso:comparison} requires in \emph{Step~2}.

Next, we turn to the swirl, which enters~\eqref{eq:aniso:zoom:finite:equation} through the source $F_n=S_n^2/R^2$. Since $RS_n=\delta_n\,ru_\theta$, the circulation bound~\eqref{eq:aniso:zoom:circulation} gives
\begin{subequations}
\label{eq:aniso:zoom:finite:source}
\begin{align}
|RS_n|\le C_\Gamma\delta_n
\,,
\label{eq:aniso:zoom:finite:source:a}
\end{align}
wherever the point~\eqref{eq:aniso:zoom:finite:variables} lies in $B(R_*)\times(t_*,0)$. Since $S_n$ vanishes at $R=0$, as $u_\theta$ does, $|S_n|/R$ is bounded by the supremum of $|\p_RS_n|$ along the radial segment to the axis, and in turn~\eqref{eq:aniso:zoom:derivatives} bounds $|\p_RS_n|$ by $\Caxi|\tau|^{-1-h}\le\Caxi$ for $\tau\le-1$. We thus arrive at
\begin{align}
|S_n/R|\le C
\,,
\label{eq:aniso:zoom:finite:source:b}
\end{align}
\end{subequations}
at every point of the preimage of $\mathcal Q$ with $\tau\le-1$.
Since the passage to the limit in \emph{Step~2} is carried out in the sense of distributions, what is needed of the swirl source $\p_ZF_n$ in~\eqref{eq:aniso:zoom:finite:equation} is that $F_n\to0$ in $L^1_{\rm loc}$ on $\{\tau\le-1\}$, up to the axis. Away from the axis we write $F_n=(RS_n)^2/R^4$, and~\eqref{eq:aniso:zoom:finite:source:a} gives $F_n\le C_\Gamma^2\delta_n^2\eps^{-4}\to0$ as $n\to\infty$, uniformly on compact subsets of $\{R\ge\eps\}$, for every $\eps>0$. Near the axis we write $F_n=(S_n/R)^2$, and~\eqref{eq:aniso:zoom:finite:source:b} gives $F_n\le C^2$ for $\tau\le-1$, up to $R=0$. Since $\{R<\eps\}$ has small measure within any compact set, and $\eps$ is arbitrary, the two bounds together give this convergence, and the swirl source $\p_ZF_n$ disappears in the distributional limit.

\begin{figure}[!htb]
\centering
\begin{tikzpicture}[scale=1.0]
\begin{scope}
\draw[blue!55!black,thick] (0,0) circle (2.15);
\node[font=\scriptsize,blue!55!black,anchor=south west,inner sep=1pt] at (1.44,1.46) {$B(\rho)$};
\draw[black,dashed,thick] (0,-2.55) -- (0,2.55);
\node[font=\scriptsize,anchor=south,inner sep=2pt] at (0,2.57) {$r=0$};
\fill[red!12] (-0.30,-1.42) rectangle (0.30,1.42);
\draw[red!70!black,thick] (-0.30,-1.42) rectangle (0.30,1.42);
\filldraw (0.20,0) circle (1.5pt);
\draw[thin,gray] (0.23,0.035) -- (0.48,0.16);
\node[font=\scriptsize,anchor=west,inner sep=3pt] at (0.48,0.16) {$x_n$};
\filldraw (0,0) circle (1.5pt);
\draw[thin,gray] (0.03,-0.035) -- (0.42,-0.50);
\node[font=\scriptsize,anchor=west,inner sep=2.5pt] at (0.42,-0.50) {$(0,z_n)$};
\draw[<->,thin] (-0.30,-1.68) -- (0.30,-1.68);
\node[font=\scriptsize,anchor=north,inner sep=2pt,fill=white] at (0,-1.72) {$2\lambda_n$};
\draw[<->,thin] (-0.85,-1.42) -- (-0.85,1.42);
\node[font=\scriptsize,anchor=east,inner sep=3pt] at (-0.88,0) {$2\mu_n$};
\node[font=\footnotesize,anchor=north] at (0,-2.86) {(a) $r_n/\lambda_n$ bounded};
\end{scope}
\begin{scope}[xshift=7.4cm]
\draw[blue!55!black,thick] (0,0) circle (2.15);
\node[font=\scriptsize,blue!55!black,anchor=south west,inner sep=1pt] at (1.44,1.46) {$B(\rho)$};
\draw[black,dashed,thick] (0,-2.55) -- (0,2.55);
\node[font=\scriptsize,anchor=south,inner sep=2pt] at (0,2.57) {$r=0$};
\fill[red!12] (0.94,-1.42) rectangle (1.54,1.42);
\draw[red!70!black,thick] (0.94,-1.42) rectangle (1.54,1.42);
\filldraw (1.24,0) circle (1.5pt);
\node[font=\scriptsize,anchor=west,inner sep=3.5pt] at (1.60,0.30) {$x_n$};
\draw[thin,gray,dotted] (1.24,-1.44) -- (1.24,-2.32);
\draw[thin,gray,dotted] (0,-2.18) -- (0,-2.32);
\draw[<->,thin] (0,-2.32) -- (1.24,-2.32);
\node[font=\scriptsize,anchor=north,inner sep=2pt] at (0.62,-2.34) {$r_n$};
\node[font=\footnotesize,anchor=north] at (0,-2.86) {(b) $r_n/\lambda_n\to\infty$};
\end{scope}
\end{tikzpicture}

\vspace{0.15cm}
\begin{tikzpicture}
\fill[red!12] (0,0) rectangle (0.40,0.26);
\draw[red!70!black] (0,0) rectangle (0.40,0.26);
\node[font=\footnotesize,anchor=west] at (0.52,0.13)
  {the zoom box: radial half-width $\lambda_n=(-t_n)^{1/2}$, axial half-width $\mu_n=(-t_n)^{1/2-h}\gg\lambda_n$};
\end{tikzpicture}
\caption{\footnotesize The anisotropic zoom of \emph{Step~1}, drawn in the plane through the axis. The box is long along the axis and short across it, since $h>0$. In~(a) the box is centered at the axis point $(0,z_n)$, the selected point $x_n$ lies at bounded rescaled distance from the axis, and the limit is an equation for the potential vorticity, with the Laplacian of $\RR^4$ acting on radial functions; in~(b) the box is centered at $x_n$, as in the variables~\eqref{eq:aniso:zoom:receding:variables} of \emph{Step~3}, the axis recedes, and the limit is an equation for the azimuthal vorticity on $\RR^2$, with diffusion in the radial variable alone.}
\label{fig:aniso:zoom}
\end{figure}

\emph{Step 2: the finite-axis case.}
Suppose that $r_n/\lambda_n\le A$ for some fixed $A$, as in Figure~\ref{fig:aniso:zoom}(a). In this step we prove that $\p_RV_n$, $\p_ZW_n$, and $V_n/R$ tend to zero at $\tau=-1$, uniformly on compact sets, and we derive from this a contradiction with~\eqref{eq:aniso:zoom:selected}.

The radial operator $\p_{RR}+3R^{-1}\p_R$ in~\eqref{eq:aniso:zoom:finite:equation} is the Laplacian of $\RR^4$ acting on radial functions. As in~\cite{KNSS09}, we therefore regard $\Omega_n$ as a function of $X\in\RR^4$ with $|X|=R$, so that~\eqref{eq:aniso:zoom:finite:equation} holds across the axis. These lifted variables are used only where the axis must be crossed, in the distributional form of the equation and in the weak-$*$ limits; the compactness argument for $\Omega_n$ takes place in the $(R,Z)$ half-plane. We define the drift
\begin{align}
\mathcal B_n(X,Z,\tau)
=\left(\frac X R \ V_n(R,Z,\tau),\ W_n(R,Z,\tau)\right)
\,.
\label{eq:drift:Bn:def}
\end{align}
By the radial expansions recorded in Section~\ref{sec:aniso:equations}, $\Omega_n$, $F_n$, $V_n/R$, and $W_n$ are smooth functions of $(R^2,Z,\tau)$, so that $\mathcal B_n$ is smooth in $(X,Z)$ across $X=0$. We record the identity
\begin{align*}
\nabla_X\Bigl(V_n\frac XR\Bigr)
=\frac{V_n}{R}\Bigl(\mathrm{Id}-\frac{X\otimes X}{R^2}\Bigr)+\p_RV_n\frac{X\otimes X}{R^2}
\,,
\end{align*}
and note that its trace is equal to $\p_RV_n+3V_n/R$. Since $V_n$ vanishes at $R=0$, we have $|V_n/R|\le\sup|\p_RV_n|$; combined with~\eqref{eq:aniso:zoom:derivatives}, this estimate shows that the drifts $\mathcal B_n$ are uniformly bounded and uniformly Lipschitz in space on compact time intervals,\footnote{The bound we obtain is $|\mathcal B_n|+|\nabla\mathcal B_n|\le C|\tau|^{-1/2}$ for $\tau\le-1$.} as Lemma~\ref{lem:aniso:comparison} requires of the limiting drift. Together with the divergence identity in~\eqref{eq:aniso:zoom:finite:identities}, the trace gives
\begin{align*}
\div_{X,Z}\mathcal B_n=\frac{2}{R} V_n
\,.
\end{align*}

Since $\{X=0\}$ is a null set, and since every term of~\eqref{eq:aniso:zoom:finite:equation} which carries a derivative is smooth across it, the equation
\begin{align}
\p_\tau \Omega_n+\div_{X,Z}(\mathcal B_n\Omega_n)
=\Delta_X\Omega_n+\delta_n^2\p_{ZZ}\Omega_n
+2\frac{V_n}{R}\Omega_n+\p_ZF_n+G_n
\label{eq:aniso:zoom:lifted}
\end{align}
holds almost everywhere in the lifted variables, on the preimage of $\mathcal Q$ under~\eqref{eq:aniso:zoom:finite:variables}, hence also in the sense of distributions there. Note that the force term $G_n=G_n(|X|,Z,\tau)$ in~\eqref{eq:aniso:zoom:lifted} enters only through its supremum, and satisfies $|G_n|\le CM_f\lambda_n^5\delta_n\to0$ on that set, as recorded after~\eqref{eq:interior:force:scaled}.

Let $K\subset\{R>0\}$ be a closed rectangle in the half-plane. The second identity in~\eqref{eq:aniso:zoom:finite:identities} and the bounds in~\eqref{eq:aniso:zoom:derivatives} may be used to estimate the first spatial derivatives of $\Omega_n$ in $L^\infty(K)$, uniformly in $n$ on compact time intervals within $(-\infty,-1]$.
Using~\eqref{eq:aniso:zoom:finite:equation}, we may bound $\p_\tau\Omega_n$ in $W^{-2,p}(\mathrm{int}\,K)$, uniformly on compact time intervals, for any $p\in[1,\infty)$, since every term in~\eqref{eq:aniso:zoom:finite:equation} other than $\p_\tau\Omega_n$ is at most two derivatives of a function bounded on $K$ uniformly in $n$.\footnote{Here we use the bounds~\eqref{eq:aniso:zoom:derivatives},~\eqref{eq:aniso:zoom:finite:bound}, the first derivative bounds on $\Omega_n$ just established, the estimate~\eqref{eq:aniso:zoom:finite:source:a}, and the uniform vanishing of $G_n$.} A standard compactness argument then gives, for every $\eta>0$, a constant $C_\eta$ (which depends also on $K$ and $p$) such that
\begin{align*}
\norm{F}_{C^0(K)}\le\eta\norm{F}_{C^{0,1}(K)}+C_\eta\norm{F}_{W^{-2,p}(\mathrm{int}\,K)}
\,,
\end{align*}
for all $F \in C^{0,1}(K)$.
Applying this inequality to $\Omega_n(\tau)-\Omega_n(\tau')$, we deduce that the maps $\tau\mapsto\Omega_n(\tau)$ are equicontinuous with values in $C^0(K)$. By Arzel\`a-Ascoli and a diagonal extraction, we arrive at
\begin{align}
\Omega_n\to \Omega_\infty
\quad\mbox{locally uniformly on }\{R>0,\ \tau\le-1\}
\,.
\label{eq:aniso:zoom:finite:compactness}
\end{align}
The endpoint $\tau=-1$ is included because every estimate above holds on each closed interval $[-T,-1]$.

Next, we extend $\Omega_\infty$ by zero to the null set $\{X=0\}$, and claim that the convergence also holds across the axis, in the sense that
\begin{align}
\Omega_n\to\Omega_\infty
\quad\mbox{in } L^p_{\rm loc}\bigl(\RR^4\times\RR\times(-\infty,-1]\bigr)
\quad\mbox{for every } 1\le p<\infty
\label{eq:aniso:zoom:finite:strong}
\,.
\end{align}
Indeed, for $\tau\le-1$ the bound~\eqref{eq:aniso:zoom:finite:bound} gives $|\Omega_n|\le C$ uniformly in $n$, up to the axis, and hence $|\Omega_\infty|\le C$ as well by~\eqref{eq:aniso:zoom:finite:compactness}, so that the tube $\{|X|<\eps\}$, whose measure is of order $\eps^4$ (at fixed $Z$ and $\tau$), contributes little to the $L^p$ norm of $\Omega_n-\Omega_\infty$ on any compact set, uniformly in $n$; off the tube, the convergence~\eqref{eq:aniso:zoom:finite:compactness} is uniform on compact sets. We used the same reasoning in \emph{Step~1}, for the convergence $F_n\to0$ in $L^1_{\rm loc}$.

We next pass to the limit in~\eqref{eq:aniso:zoom:lifted}, and show that $\Omega_\infty$ solves~\eqref{eq:aniso:zoom:finite:limit} below in the weak sense.
For this purpose, the bounds~\eqref{eq:aniso:zoom:derivatives} allow us to extract weak-$*$ limits $V$ and $W$ of $V_n$ and $W_n$, on compact subsets of the lifted space-time domain. Since $X/R$ is a bounded function independent of $n$, the drifts $\mathcal B_n$ defined in~\eqref{eq:drift:Bn:def} converge weakly-$*$ to $\mathcal B=(VX/R,W)$. These drifts are uniformly Lipschitz in space on compact time intervals, so that we simultaneously extract the weak-$*$ limit of the gradients $\nabla\mathcal B_n$, which is the distributional gradient of $\mathcal B$. The constants in~\eqref{eq:aniso:zoom:derivatives} do not depend on the compact spatial set, so that $|\mathcal B|+|\nabla\mathcal B|\le C_J$ almost everywhere on $\RR^4\times\RR\times J$, for every compact $J\subset(-\infty,-1)$. By Fubini, $\mathcal B(\cdot,\tau)\in W^{1,\infty}(\RR^4\times\RR)$ for almost every $\tau$, with the same bound, so that~\eqref{eq:aniso:comparison:drift} holds with $\Lambda_J\le C_J$. Moreover, since $|X|^{-1}$ is locally integrable in four dimensions, $\varphi/R$ is integrable for every $\varphi\in C^\infty_c(\RR^4\times\RR\times(-\infty,-1))$, so that the weak-$*$ convergence of $V_n$ gives $V_n/R\to V/R$ in distributions. Passing to the limit in $\div_{X,Z}\mathcal B_n=2V_n/R$ yields $\div_{X,Z}\mathcal B=2V/R$.
The weak-$*$ convergence of $\mathcal B_n$ against the strong convergence~\eqref{eq:aniso:zoom:finite:strong} of $\Omega_n$, together with the uniform bound $|V_n/R|\le\sup|\p_RV_n|$, allows us to pass to the limit in the products in~\eqref{eq:aniso:zoom:lifted}, while $\delta_n\to0$, $G_n\to0$ uniformly, and $F_n\to0$ in $L^1_{\rm loc}$ remove the remaining terms. In view of $\div_{X,Z}\mathcal B=2V/R$, the limit is the equation of Lemma~\ref{lem:aniso:comparison}, in the divergence form of its footnote, namely
\begin{align}
\label{eq:aniso:zoom:finite:limit}
\p_\tau \Omega_\infty+\mathcal B\cdot\nabla \Omega_\infty=\Delta_X\Omega_\infty
\quad\mbox{on }\RR^4\times\RR\times(-\infty,-1)
\,.
\end{align}

Since $\delta_n^2|\tau|^{-3/2+h}\le\delta_n^2$ for $\tau\le-1$, the bound~\eqref{eq:aniso:zoom:finite:bound} implies
\begin{align*}
|\Omega_\infty(X,Z,\tau)|\le \Caxi|\tau|^{-3/2-h}
\,.
\end{align*}
Thus $\Omega_\infty$ is bounded on $\RR^4\times\RR\times J$ for every compact $J\subset(-\infty,-1)$, solves~\eqref{eq:aniso:zoom:finite:limit} in distributions with a drift satisfying~\eqref{eq:aniso:comparison:drift}, and obeys~\eqref{eq:aniso:ancient:decay} with $\kappa=3/2+h$. The last assertion of Lemma~\ref{lem:aniso:comparison}, with $(m,d)=(5,4)$, gives
\[
\Omega_\infty=0 \,.
\]
By~\eqref{eq:aniso:zoom:finite:compactness}, $\Omega_\infty$ is continuous on $\{R>0,\ \tau\le-1\}$, so that it vanishes at every point of this set, the endpoint $\tau=-1$ being included.

At $\tau=-1$, the bounds~\eqref{eq:aniso:zoom:derivatives} make $V_n(\cdot,-1)$ and $W_n(\cdot,-1)$ bounded in $C^2$ on every compact subset of the closed half-plane, with a constant which does not depend on the compact set. Passing to a further subsequence, we obtain $V_n(\cdot,-1)\to V^0$ and $W_n(\cdot,-1)\to W^0$ in $C^1$ on compact subsets, with $V^0$ and $W^0$ bounded on the entire half-plane. Passing to the limit in the first identity in~\eqref{eq:aniso:zoom:finite:identities}, we obtain
\begin{align*}
\p_R V^0+\frac{V^0}{R}+\p_Z W^0=0
\,,
\end{align*}
on $\{R>0\}$. Since $\Omega_n(\cdot,-1)\to0$ off the axis, by~\eqref{eq:aniso:zoom:finite:compactness} and $\Omega_\infty=0$, and since $\delta_n\to0$ while $\p_ZV_n(\cdot,-1)$ is bounded, the second identity in~\eqref{eq:aniso:zoom:finite:identities} gives
\begin{align*}
\p_RW^0=0
\,,
\end{align*}
on $\{R>0\}$.
In contrast to~\cite{CIVEulerLength}, the vorticity identity yields only $\p_RW^0=0$, not a curl free limit; instead, the boundedness of $V^0$ replaces Liouville's theorem.
Since $W^0$ does not depend on $R$, we may rewrite the divergence identity for $(V^0,W^0)$ as $\p_R(RV^0)=-R\,\p_ZW^0(Z)$. Integrating this from the axis, where $RV^0$ vanishes because $V^0$ is bounded, we obtain
\begin{align*}
V^0(R,Z)=-\frac R2\p_ZW^0(Z)
\,.
\end{align*}
The boundedness of $V^0$ for all $R\ge0$ forces $\p_ZW^0=0$, and then also $V^0=0$, on $\{R>0\}$, hence on the closed half-plane by continuity. Since the convergence to $V^0$ and $W^0$ holds in $C^1$ on compact subsets of the closed half-plane, we conclude that, uniformly on every such compact subset,
\begin{align}
\p_RV_n(\cdot,-1)\to 0
\,,\qquad
\p_ZW_n(\cdot,-1)\to 0
\,,\qquad
V_n(\cdot,-1)/R\to 0
\,.
\label{eq:aniso:zoom:finite:endpoint}
\end{align}
The convergences hold up to the axis; for the last one this is because $V_n(R,Z,-1)/R=\int_0^1\p_RV_n(\sigma R,Z,-1)\,d\sigma$ for $R>0$, with the right side continuous at $R=0$.

With~\eqref{eq:aniso:zoom:finite:endpoint} in hand, we return to the points $x_n$ selected in~\eqref{eq:aniso:zoom:selected}. In the variables~\eqref{eq:aniso:zoom:finite:variables}, the point $(x_n,t_n)$ has coordinates $(R,Z,\tau)=(r_n/\lambda_n,0,-1)$, and $(r_n/\lambda_n,0)$ lies in the compact set $[0,A]\times\{0\}$ for every $n$. Since $\p_R=\lambda_n\p_r$, $\p_Z=\mu_n\p_z$, $R=r/\lambda_n$, and $\lambda_n^2=\lambda_n\delta_n\mu_n=(-t_n)$, the definitions~\eqref{eq:aniso:zoom:fields} and the chain rule show that the left side of~\eqref{eq:aniso:zoom:selected} equals
\begin{align*}
(-t_n)\Bigl(|\p_ru_r|+\Bigl|\frac{u_r}{r}\Bigr|+|\p_zu_z|\Bigr)(x_n,t_n)
=\Bigl(|\p_RV_n|+\Bigl|\frac{V_n}{R}\Bigr|+|\p_ZW_n|\Bigr)\Bigl(\frac{r_n}{\lambda_n},0,-1\Bigr)
\,,
\end{align*}
with both quotients extended continuously to the axis. Since the convergence~\eqref{eq:aniso:zoom:finite:endpoint} is uniform on $[0,A]\times\{0\}$, the right side of the last display tends to zero along the subsequence, while its left side is bounded from below by $c_0>0$. The contradiction ansatz~\eqref{eq:aniso:zoom:selected} thus fails in the finite-axis case.

\emph{Step 3: the receding-axis case.}
We now turn to the other alternative of \emph{Step~1}: suppose that $A_n=r_n/\lambda_n\to\infty$, as in Figure~\ref{fig:aniso:zoom}(b). In this step we prove that $\p_RV_n$ and $\p_ZW_n$, now taken in the recentered variables~\eqref{eq:aniso:zoom:receding:variables}, tend to zero at the selected point, and that the remaining term $(-t_n)u_r/r$ is small there by~\eqref{eq:aniso:bounds} alone; we derive from this a contradiction with~\eqref{eq:aniso:zoom:selected}.

The argument follows \emph{Step~2}, with the azimuthal vorticity in place of the potential vorticity and no axis to cross. We now zoom in about the selected point $x_n$ itself, in the variables $(R,Z,\tau)$ defined by
\begin{align}
r=r_n+\lambda_nR=\lambda_n(A_n+R)
\,,\qquad z=z_n+\mu_nZ
\,,\qquad t=(-t_n)\tau
\,.
\label{eq:aniso:zoom:receding:variables}
\end{align}
We define $V_n,W_n,S_n$ as in~\eqref{eq:aniso:zoom:fields}, with the same prefactors and with the point in~\eqref{eq:aniso:zoom:receding:variables} as argument. The rescaled azimuthal vorticity is
\begin{align}
\Theta_n(R,Z,\tau) =\lambda_n^2\delta_n \, \omega_\theta(r,z,t)
=\delta_n^2 \p_Z V_n(R,Z,\tau) -\p_R W_n(R,Z,\tau)
\,,
\label{eq:aniso:zoom:receding:vorticity}
\end{align}
where $(r,z,t)$ is the point~\eqref{eq:aniso:zoom:receding:variables}.
Since $\lambda_n,\mu_n\to0$ by~\eqref{eq:aniso:zoom:lengths}, $A_n\to\infty$ by assumption, and $|x_n|<\rho<R_*$, the preimage of $\mathcal Q$ under~\eqref{eq:aniso:zoom:receding:variables}, whose axis sits at $R=-A_n$, contains any given compact subset of $\RR^2\times(-\infty,-1]$ for all $n$ large depending on that subset; so does the preimage of $B(R_*)\times(t_*,0)$, on which~\eqref{eq:aniso:zoom:circulation} holds. The bounds~\eqref{eq:aniso:zoom:derivatives} are unchanged, since $\p_R=\lambda_n\p_r$ and $\p_Z=\mu_n\p_z$ as in~\eqref{eq:aniso:zoom:finite:variables}, and since~\eqref{eq:aniso:bounds} holds at every point of $\mathcal Q$. The divergence equation~\eqref{eq:aniso:scalar:nse:div} and the azimuthal vorticity equation~\eqref{eq:aniso:theta} become
\begin{subequations}
\label{eq:aniso:zoom:receding}
\begin{align}
\p_RV_n+\frac{V_n}{A_n+R}+\p_ZW_n&=0
\,,
\label{eq:aniso:zoom:receding:div}
\end{align}
and
\begin{align}
&\p_\tau\Theta_n+V_n\p_R\Theta_n+W_n\p_Z\Theta_n
-\frac{V_n}{A_n+R}\Theta_n
\notag\\
&\quad =\left(\p_{RR}+\frac1{A_n+R}\p_R
+\delta_n^2\p_{ZZ}-\frac1{(A_n+R)^2}\right)\Theta_n
+\frac1{A_n+R}\p_Z(S_n^2)+H_n
\,,
\label{eq:aniso:zoom:receding:equation}
\end{align}
\end{subequations}
at every point of the preimage of $\mathcal Q$ under~\eqref{eq:aniso:zoom:receding:variables} with $A_n+R>0$, and with $H_n$ given by~\eqref{eq:interior:force:scaled:b}.
By~\eqref{eq:aniso:zoom:receding:vorticity} and~\eqref{eq:aniso:zoom:derivatives}, on the preimage of $\mathcal Q$ under~\eqref{eq:aniso:zoom:receding:variables},
\begin{align}
|\Theta_n|
\le \Caxi\left(\delta_n^2|\tau|^{-1+h}+|\tau|^{-1-h}\right)
\,.
\label{eq:aniso:zoom:receding:bound}
\end{align}
This plays the role of~\eqref{eq:aniso:zoom:finite:bound} in \emph{Step~2}.

Since $(A_n+R)S_n=\delta_n\,ru_\theta$ (see~\eqref{eq:aniso:zoom:fields:c} and~\eqref{eq:aniso:zoom:receding:variables}), the circulation bound~\eqref{eq:aniso:zoom:circulation} gives
\[
|S_n|\le C_\Gamma \frac{\delta_n}{A_n+R}
\]
wherever the point defined in~\eqref{eq:aniso:zoom:receding:variables} lies in $B(R_*)\times(t_*,0)$, and $A_n+R>0$.\footnote{We appeal here to the circulation bound rather than to the $S_n$ estimate in~\eqref{eq:aniso:zoom:derivatives}, so that the bounds on $u_\theta$ enter this section in one place only; see Remark~\ref{rem:interior:meridional}.}

Let $K\subset\RR^2$ be a closed rectangle. The identity~\eqref{eq:aniso:zoom:receding:vorticity} and the bounds in~\eqref{eq:aniso:zoom:derivatives} may be used to estimate the first spatial derivatives of $\Theta_n$ in $L^\infty(K)$, uniformly in $n$ on compact time intervals within $(-\infty,-1]$. Using~\eqref{eq:aniso:zoom:receding:equation}, we may bound $\p_\tau\Theta_n$ in $W^{-2,p}(\mathrm{int}\,K)$, uniformly on compact time intervals, for any $p\in[1,\infty)$, since every term in~\eqref{eq:aniso:zoom:receding:equation} other than $\p_\tau\Theta_n$ is at most two derivatives of a function bounded on $K$ uniformly in $n$.\footnote{Here we use the bounds~\eqref{eq:aniso:zoom:derivatives} and~\eqref{eq:aniso:zoom:receding:bound}, the first derivative bounds on $\Theta_n$ just established, the uniform vanishing of $H_n$, and the bound on $S_n$ just obtained, applied to the swirl source written as $\p_Z(S_n^2/(A_n+R))$, which is possible since $A_n+R$ does not depend on $Z$.} The compactness argument of \emph{Step~2}, with no axis to cross, then gives
\[
\Theta_n\to\Theta \qquad \mbox{locally uniformly on } \RR^2\times(-\infty,-1] \,.
\]

Our next goal is to pass to the limit $n\to\infty$ in~\eqref{eq:aniso:zoom:receding} and~\eqref{eq:aniso:zoom:receding:bound}, and to show that $\Theta$ solves the limiting equation~\eqref{eq:aniso:zoom:receding:limit:a} below. For this purpose, since by~\eqref{eq:aniso:zoom:derivatives} the fields $V_n$ and $W_n$ are uniformly bounded and uniformly Lipschitz in space on compact time intervals, we extract weak-$*$ limits $V,W$ of $V_n,W_n$ on compact subsets of $\RR^2\times(-\infty,-1]$, together with the weak-$*$ limits of their spatial gradients, which are the distributional gradients of $V$ and $W$.
Three terms of~\eqref{eq:aniso:zoom:receding:equation} disappear in the limit because the axis recedes. On a compact spatial set the factors $(A_n+R)^{-1}$ and $(A_n+R)^{-2}$ tend uniformly to zero as $A_n\to\infty$, while $\Theta_n$ and $\p_R\Theta_n$ are bounded there, uniformly in $n$ on compact time intervals; hence the viscous terms $(A_n+R)^{-1}\p_R\Theta_n$ and $(A_n+R)^{-2}\Theta_n$ tend uniformly to zero. The swirl source, written as $\p_Z(S_n^2/(A_n+R))$, tends to zero in distributions, since $S_n^2/(A_n+R)\le C_\Gamma^2\delta_n^2/(A_n+R)^3$ by the bound on $S_n$ above, and the right side vanishes uniformly as $n\to\infty$.
The term $\delta_n^2\p_{ZZ}\Theta_n$ likewise tends to zero in distributions because $\delta_n\to0$. Recall from the discussion after~\eqref{eq:interior:force:scaled} that the force term $H_n\to0$ uniformly. The stretching term $-(A_n+R)^{-1}V_n\Theta_n$ on the left side of~\eqref{eq:aniso:zoom:receding:equation} is not discarded in the limit. By~\eqref{eq:aniso:zoom:receding:div} it equals $\Theta_n\div_{R,Z}(V_n,W_n)$, so that on the left side of~\eqref{eq:aniso:zoom:receding:equation} we have a perfect divergence
\begin{align*}
V_n\p_R\Theta_n+W_n\p_Z\Theta_n-\frac{V_n}{A_n+R}\Theta_n
=\div_{R,Z}\bigl((V_n,W_n)\Theta_n\bigr)
\,.
\end{align*}

Passing to the limit, using the weak-$*$ convergence of $(V_n,W_n)$ together with the strong convergence of $\Theta_n$, and with $V_n$ bounded and $A_n\to\infty$ in~\eqref{eq:aniso:zoom:receding:div}, we obtain
\begin{subequations}
\label{eq:aniso:zoom:receding:limit}
\begin{align}
\p_\tau\Theta+V\p_R\Theta+W\p_Z\Theta&=\p_{RR}\Theta
\label{eq:aniso:zoom:receding:limit:a}
\,,\\
\p_RV+\p_ZW&=0 \,,
\end{align}
on $\RR^2\times(-\infty,-1)$. For $\tau\le-1$, the bound
\begin{align}
|\Theta(R,Z,\tau)|\le \Caxi|\tau|^{-1-h}
\end{align}
\end{subequations}
is inherited from~\eqref{eq:aniso:zoom:receding:bound}, since $\delta_n^2|\tau|^{-1+h}\le\delta_n^2$ in that range. The terms other than $\p_\tau\Theta_n$ on the left side of~\eqref{eq:aniso:zoom:receding:equation} converge in distributions to $\div_{R,Z}\bigl((V,W)\Theta\bigr)$, and the limiting drift is divergence free, so that the transport term in~\eqref{eq:aniso:zoom:receding:limit:a} is the one of the footnote of Lemma~\ref{lem:aniso:comparison}.

We are now in a position to apply Lemma~\ref{lem:aniso:comparison}.
The limiting field $\Theta$ is continuous on $\RR^2\times(-\infty,-1]$, being a locally uniform limit, and is bounded on $\RR^2\times J$ for every compact $J\subset(-\infty,-1)$. Since the bounds in~\eqref{eq:aniso:zoom:derivatives} do not depend on the compact spatial set, the limiting drift $(V,W)$ lies, as in \emph{Step~2}, in $W^{1,\infty}(\RR^2)$ for almost every $\tau$, and satisfies~\eqref{eq:aniso:comparison:drift} on every compact time interval. The last assertion of Lemma~\ref{lem:aniso:comparison}, applied to~\eqref{eq:aniso:zoom:receding:limit} with $(m,d)=(2,1)$ and $\kappa=1+h$, therefore gives
\[
\Theta=0
\]
on $\RR^2\times(-\infty,-1]$, by continuity. In particular, $\Theta(\cdot,-1)=0$.

At $\tau=-1$, the bounds~\eqref{eq:aniso:zoom:derivatives} make $V_n(\cdot,-1)$ and $W_n(\cdot,-1)$ bounded in $C^2$ on every compact subset of $\RR^2$. Passing to a further subsequence, we obtain $V_n(\cdot,-1)\to V^0$ and $W_n(\cdot,-1)\to W^0$ in $C^1$ on compact subsets. The limits $V^0$ and $W^0$ are bounded on $\RR^2$ by the constant $\Caxi$ of~\eqref{eq:aniso:zoom:derivatives}, which does not depend on the compact set.
Passing to the limit in the divergence identity~\eqref{eq:aniso:zoom:receding:div}, in which $V_n/(A_n+R)\to0$ uniformly on compact sets since $V_n$ is bounded and $A_n\to\infty$, we obtain
\begin{align*}
\p_RV^0+\p_ZW^0=0
\,.
\end{align*}
Since $\Theta_n(\cdot,-1)\to\Theta(\cdot,-1)=0$ locally uniformly, and since $\delta_n\to0$ while $\p_ZV_n$ is bounded, the identity~\eqref{eq:aniso:zoom:receding:vorticity} gives
\begin{align*}
\p_RW^0=0
\,,
\end{align*}
so that $W^0=W^0(Z)$. Integrating the divergence identity $\p_RV^0+\p_ZW^0=0$ in $R$, we obtain
\begin{align*}
V^0(R,Z)=-R\,\p_ZW^0(Z)+V^0(0,Z) \,.
\end{align*}
As in \emph{Step~2}, the boundedness of $V^0$ for all $R\in\RR$ forces $\p_ZW^0=\p_RV^0=0$. Since the convergence at $\tau=-1$ holds in $C^1$ on compact subsets, we obtain
\begin{align}
\p_RV_n(0,0,-1)\to 0
\,,\qquad
\p_ZW_n(0,0,-1)\to 0
\,.
\label{eq:aniso:zoom:receding:endpoint}
\end{align}

To conclude, we compare~\eqref{eq:aniso:zoom:receding:endpoint} with the selection~\eqref{eq:aniso:zoom:selected}. In the variables of~\eqref{eq:aniso:zoom:receding:variables} the point $(x_n,t_n)$ is $(0,0,-1)$, and the chain rule gives
\begin{align*}
(-t_n)\,\p_ru_r(x_n,t_n)=\p_RV_n(0,0,-1)
\,,\qquad
(-t_n)\,\p_zu_z(x_n,t_n)=\p_ZW_n(0,0,-1)
\,.
\end{align*}
Thus, the first and the third term on the left side of~\eqref{eq:aniso:zoom:selected} tend to zero by~\eqref{eq:aniso:zoom:receding:endpoint}. For the second term no limiting argument is needed: since $A_n\to\infty$, we have $r_n>0$ for all $n$ large, and~\eqref{eq:aniso:bounds} gives
\begin{align}
(-t_n)\left|\frac{u_r(x_n,t_n)}{r_n}\right|
\le C\frac{\lambda_n}{r_n}=\frac{C}{A_n}\to0
\,.
\label{eq:aniso:zoom:receding:quotient}
\end{align}
The left side of~\eqref{eq:aniso:zoom:selected} thus tends to zero along the subsequence, which contradicts the lower bound $c_0>0$ in the receding-axis case as well, thereby concluding the proof of~\eqref{eq:aniso:small}.
\end{proof}

\section{The local regularity criterion}
\label{sec:aniso:closure}

In this section we show that the relative smallness provided by Proposition~\ref{prop:aniso:small}, together with the regular annulus of Lemma~\ref{lem:aniso:annulus}, makes $(0,0)$ a regular point, which proves Theorem~\ref{thm:aniso:main}. 

\begin{lemma}[{\bf Smallness of the meridional quantities implies regularity}]
\label{lem:aniso:closure}
Let $(u,\pi)$ be an axisymmetric suitable weak solution of~\eqref{eq:nse:forced} on $\mathcal Q$, smooth on compact subsets of $\mathcal Q$, with an axisymmetric force satisfying~\eqref{eq:interior:force:c-two}; the bounds~\eqref{eq:aniso:bounds} are not assumed. Suppose that
\begin{align}
\lim_{t\uparrow0}(-t)G_{\rho_0}(t)=0
\label{eq:aniso:closure:small}
\end{align}
holds for some $\rho_0\in(0,1)$, with $G_{\rho_0}$ defined as in~\eqref{eq:aniso:G}. Then $(0,0)$ is a regular point.
\end{lemma}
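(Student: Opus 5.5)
The plan follows the outline given in the introduction. The first step is a sub-algebraic bound on the swirl. Fix $\eta>0$. By~\eqref{eq:aniso:closure:small} there is $t_\eta<0$ such that $(-t)\|u_r/r\|_{L^\infty(B(\rho_0))}\le\eta$ for $t\in(t_\eta,0)$. Apply Lemma~\ref{lem:aniso:annulus} with $R_1=\rho_0/2$ and $R_0=\rho_0$. This gives an annulus on which $u$, $\nabla u$, $\nabla^2u$ are bounded up to $t=0$, and it gives a ball $B(R_*)$ with $R_*<\rho_0$.

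Set $\phi=(-t)^\eta u_\theta$. By~\eqref{eq:aniso:scalar:nse:swirl}, $\phi$ satisfies an equation of the form~\eqref{eq:aniso:axis:pde} with $k=1$, and its zeroth-order coefficient is $\gamma=u_r/r+\eta(-t)^{-1}+r^{-2}$. The term $r^{-2}$ has to be absorbed, so I use the operator with $k=1$ and keep $r^{-2}\phi$ inside $\gamma$. Since $u_r/r\ge-\eta(-t)^{-1}$, we get $\gamma\ge0$. The source is $(-t)^\eta f_\theta$, which is bounded. Lemma~\ref{lem:aniso:axis} on $B(R_*)\times(t_\eta,s)$ then bounds $\phi$ by its parabolic boundary values, and these are controlled by the regular annulus. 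Hence
\[
\|u_\theta(\cdot,t)\|_{L^\infty(B(R_*))}\le C_\eta(-t)^{-\eta}.
\]

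The second step is a localized enstrophy estimate. Take a cutoff $\chi$ supported in $B(R_*)$ and equal to $1$ on a smaller ball, with $\nabla\chi$ supported in the regular annulus. Compute $\frac{d}{dt}\int\chi|\omega|^2$. The cutoff error terms are bounded, and the force contributes at most $C(1+\int\chi|\omega|^2)$.

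For the stretching term $\int\chi\,\omega\cdot(\nabla u)\omega$ I split it into its axisymmetric components:
\begin{itemize}[leftmargin=2em]
\item $(u_r/r)\,\omega_\theta^2$ and $\p_ru_r\,\omega_r^2$, $\p_zu_z\,\omega_z^2$, together with the cross terms whose coefficients are $\p_zu_r$. All of these coefficients are at most $\eps(t)(-t)^{-1}$ by~\eqref{eq:aniso:closure:small}, with $\eps(t)\to0$.
\item $\p_ru_z\,\omega_r\omega_z$, whose coefficient is not controlled in this way.
\end{itemize}
For the second kind, write $\omega_r=-\p_zu_\theta$ and integrate by parts in $z$. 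This produces $\int\chi\,u_\theta\,\p_z(\p_ru_z\,\omega_z)$ plus a cutoff term. The result is bounded by
\[
\|u_\theta\|_\infty\,\|\nabla^2u\|_{L^2}\,\|\omega\|_{L^2}+\|u_\theta\|_\infty\,\|\nabla u\|_{L^2}\,\|\nabla\omega\|_{L^2},
\]
after which local div–curl estimates give $\|\nabla^2u\|_{L^2}\lesssim\|\nabla\omega\|_{L^2}+{}$lower order, with the annulus bounds handling the boundary terms. Young's inequality then yields
\[
\frac{d}{dt}E+D\le \bigl(\eps(t)(-t)^{-1}+C_\eta(-t)^{-2\eta}\bigr)E+C,
\]
where $E=\int\chi|\omega|^2$ and $D=\int\chi|\nabla\omega|^2$.

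Grönwall's inequality gives $E(t)\lesssim(-t)^{-\eps'}$ for arbitrarily small $\eps'$, which is not yet uniform. \textbf{The main obstacle is closing the estimate} so that $E$ stays bounded up to $t=0$. I expect to fix this by sharpening the treatment of the bad term. The idea is to use $\|\omega_z\|_{L^3}$ and the interpolation $\|\omega\|_{L^3}\le\|\omega\|_{L^2}^{1/2}\|\nabla\omega\|_{L^2}^{1/2}$, so that the swirl factor $(-t)^{-\eta}$ appears raised to a power at which it is integrable in time. Alternatively, the stretching coefficients can be handled at the level of $L^p$ of $\Omega$ and $u_\theta/r$ in the spirit of~\cite{CSTY09}.

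Once $E$ is bounded, we have $\omega\in L^\infty_tL^2_x$ locally. Hence $u\in L^\infty_tL^6_x$ near the origin, which is a Ladyzhenskaya–Prodi–Serrin class, and this makes $(0,0)$ a regular point.
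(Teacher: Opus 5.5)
Your first two steps are the paper's: the barrier $(-t)^\eta u_\theta$ with Lemma~\ref{lem:aniso:axis}, the localized enstrophy balance, and the integration by parts in $z$ on $(\p_ru_z)\omega_r\omega_z$. The genuine gap is at the end. You stop at $E(t)\lesssim(-t)^{-\eps'}$, say that $E$ must be bounded up to $t=0$, and propose to get there by re-estimating the swirl term.

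That diagnosis is off. In your own inequality the swirl coefficient $C_\eta(-t)^{-2\eta}$ is already integrable in time once $\eta<1/2$. What blocks a uniform bound is the meridional coefficient $\eps(t)(-t)^{-1}$. The hypothesis~\eqref{eq:aniso:closure:small} only makes it $o((-t)^{-1})$, and it need not be integrable. Neither the $L^3$ interpolation nor an $L^p$ argument in the spirit of~\cite{CSTY09} aimed at the swirl touches that term, so as written the proof does not close.

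The missing observation is that boundedness of $E$ is unnecessary. Fix $\eps'<1/2$ and start Gr\"onwall late enough. The local div-curl estimate, with the annulus bounds controlling the cutoff errors, and Sobolev then give $\|u(\cdot,t)\|_{L^6(B(\rho))}\lesssim\|\nabla(\chi u)(\cdot,t)\|_{L^2}\lesssim(E(t)+1)^{1/2}\lesssim(-t)^{-\eps'/2}$. Hence $u\in L^4(t_1,0;L^6(B(\rho)))$, a Ladyzhenskaya-Prodi-Serrin class at the borderline $3/6+2/4=1$. Its norm on $B(r)\times(-r^2,0)$ tends to zero as $r\downarrow0$. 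The criterion of Gustafson, Kang, and Tsai~\cite{GKT07}, applied at the top point $(0,0)$ to the pair with the bounded divergence-free force of footnote~\ref{foot:aniso:pressure}, then gives regularity. This is exactly how the paper concludes, choosing $\eta=1/(16C_*)$ so that the Gr\"onwall growth is only $(-t)^{-1/16}$.

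A smaller omission: your list of stretching terms leaves out $-2\frac{u_\theta}{r}\omega_r\omega_\theta$, whose coefficient is not among the quantities in $G_{\rho_0}$. It is harmless but needs its own argument. Bound it by $\|u_\theta\|_{L^\infty(B(R_*))}D^{1/2}E^{1/2}$, using that $\omega_r^2/r^2$ is part of $|\nabla\omega|^2$ by~\eqref{eq:aniso:closure:gradient}. Young's inequality then gives another $\frac14D+C\|u_\theta\|_{L^\infty(B(R_*))}^2E$, which fits your differential inequality.
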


\begin{proof}[Proof of Lemma~\ref{lem:aniso:closure}]
We fix a regular annulus for $u$ inside $B(\rho_0)$, as in Lemma~\ref{lem:aniso:annulus} applied with $R_1=0$ and $R_0=\rho_0$: on the cylindrical annulus $\{R_-<|x|<R_+\}\times(t_0,0)$ the velocity $u$, together with $\nabla u$ and $\nabla^2u$, is bounded. We then fix radii $R_-<\rho<R_*<R_+$ and a smooth radial cutoff $\chi$ supported in $B(R_*)$, equal to one on $B(\rho)$, whose derivatives are supported in the cylindrical annulus. Throughout the proof, $C$ denotes a constant which depends only on $\chi$, on $M_f$, on the $u$ bounds on the cylindrical annulus, but never on the parameter $\eta$ chosen at the end of the proof. In particular, every term in which a derivative falls on $\chi$ is bounded by such a constant, since it is supported in the cylindrical annulus. Lastly, norms without a specified domain are taken on $\RR^3$, the localized fields being extended by zero.

We write $G(t)=G_{\rho_0}(t)$, and for each $\eta>0$ we fix $t_\eta\in(t_0,0)$ so close to zero that
\begin{align}
G(t)\le\frac{\eta}{(-t)}
\qquad\mbox{on }(t_\eta,0)
\,.
\label{eq:aniso:G:bound}
\end{align}
Since $B(R_*)\subset B(\rho_0)$, the four quantities in~\eqref{eq:aniso:G} are bounded by $G$ on $B(R_*)$; in particular $|u_r/r|\le G$ there. The azimuthal velocity $u_\theta$, moreover, vanishes on the axis, and is bounded on $\partial B(R_*)\times(t_0,0)$ and also on the initial time slice $B(R_*)\times\{t_\eta\}$. By the swirl equation~\eqref{eq:aniso:scalar:nse:swirl}, the function $\psi=(-t)^\eta u_\theta$ satisfies
\[
\p_t\psi+b\cdot\nabla\psi-\Delta\psi
+\Bigl(r^{-2}+\frac{u_r}{r}+\frac{\eta}{(-t)}\Bigr)\psi
=(-t)^\eta f_\theta
\,,
\]
where $b=u_re_r+u_ze_z$ is the meridional velocity. By~\eqref{eq:aniso:G:bound}, the zeroth order coefficient is therefore at least $r^{-2}\ge0$ on $B(R_*)\times(t_\eta,0)$, and the source is bounded by $M_f$. Since $\Delta=\p_r^2+r^{-1}\p_r+\p_z^2$ on axisymmetric scalars, this equation is of the form~\eqref{eq:aniso:axis:pde} with $k=1$. The coefficient $r^{-2}$ is singular on the axis, but $\psi$ vanishes there, and for every $s\in(t_\eta,0)$ it is continuous on $\overline{B(R_*)}\times[t_\eta,s]$ and smooth off the axis. As in the proof of Lemma~\ref{lem:aniso:annulus}, appealing to Lemma~\ref{lem:aniso:axis} we obtain a bound for $|\psi|$, uniformly in $s$, by its values at time $t_\eta$ and on $\partial B(R_*)\times[t_\eta,0)$, plus $M_f|t_\eta|$. That is, there is a constant $C_\eta>0$ such that $|\psi|\le C_\eta$ on $B(R_*)\times(t_\eta,0)$, and 
\begin{align}
W(t):=\norm{u_\theta(\cdot,t)}_{L^\infty(B(R_*))}
\le C_\eta(-t)^{-\eta}
\,,\qquad t_\eta<t<0
\,.
\label{eq:aniso:closure:w}
\end{align}

We let
\[
Y(t)=\norm{\chi\,\omega(\cdot,t)}_{L^2}^2
\,,\qquad
M(t)=\norm{\chi\,\nabla\omega(\cdot,t)}_{L^2}^2
\,,
\]
where $\omega=\curl u$. Taking the curl of~\eqref{eq:nse:forced}, we obtain
\[
\p_t\omega+u\cdot\nabla\omega-\Delta\omega=\omega\cdot\nabla u+\curl f
\,,
\]
and we test this equation against $\chi^2\omega$. The transport and diffusion terms then produce $\frac12 \frac{d}{dt} Y$ and $M$, up to cutoff errors which are bounded by $C$. The force term, in turn, is bounded by $C\norm{\curl f}_{L^\infty}Y^{1/2}\le C(Y+1)$, and we arrive at
\begin{align}
\frac12 \frac{d}{dt} Y +M\le\int\chi^2(\omega\cdot\nabla u)\cdot\omega\,dx+C(Y+1)
\,.
\label{eq:aniso:closure:tested}
\end{align}
Bounding the stretching term requires two elliptic bounds, which we record next. Since $\div b=0$ and $\curl b=\omega_\theta e_\theta$, the identities $\|\nabla v\|_{L^2}^2=\|\curl v\|_{L^2}^2+\|\div v\|_{L^2}^2$ and $\|\nabla^2v\|_{L^2}^2=\|\nabla\curl v\|_{L^2}^2+\|\nabla\div v\|_{L^2}^2$, valid for smooth compactly supported $v$, applied to $\chi u$ and to $\chi b$, give
\begin{subequations}
\label{eq:aniso:closure:elliptic}
\begin{align}
\norm{\nabla(\chi u)}_{L^2}+\norm{\chi\nabla b}_{L^2}
&\le C(Y+1)^{1/2}
\,,\\
\norm{\chi\nabla^2b}_{L^2}
&\le C(M+1)^{1/2}
\,.
\end{align}
\end{subequations}
The second inequality also uses $|\nabla(\omega_\theta e_\theta)|\le|\nabla\omega|$, which follows from the Cartesian gradient identity for an axisymmetric vector field,
\begin{align}
|\nabla\omega|^2
=\sum_{\circ\in\{r,\theta,z\}}\bigl(|\p_r\omega_\circ|^2+|\p_z\omega_\circ|^2\bigr)
+\frac{\omega_r^2+\omega_\theta^2}{r^2}
\,.
\label{eq:aniso:closure:gradient}
\end{align}
In particular, we note that the axis terms $\omega_r^2/r^2$ and $\omega_\theta^2/r^2$ are part of the dissipation.

In the cylindrical frame, with $\omega_r=-\p_zu_\theta$ and $\omega_z=\p_ru_\theta+u_\theta/r$, the stretching term reads
\begin{align}
(\omega\cdot\nabla u)\cdot\omega
={}&(\p_r u_r)\omega_r^2
+\frac{u_r}{r}\omega_\theta^2
+(\p_z u_z)\omega_z^2
+(\p_z u_r+\p_r u_z)\omega_r\omega_z
-2\frac{u_\theta}{r}\omega_r\omega_\theta
\,.
\label{eq:aniso:closure:stretch}
\end{align}
After multiplication by $\chi^2$ and integration, the first three terms on the right side of~\eqref{eq:aniso:closure:stretch}, and the term containing $\p_zu_r$, are all bounded by $C\, G\, Y$, their coefficients being the four quantities in the definition in~\eqref{eq:aniso:G}. For the term containing $\p_ru_z$ we integrate by parts in $z$ using that $\omega_r=-\p_zu_\theta$, and arrive at 
\begin{align}
I:=\int\chi^2(\p_r u_z)\omega_r\omega_z\,dx
=-\int\chi^2(\p_r u_z)(\p_zu_\theta)\omega_z\,dx
=\int u_\theta\,\p_z\bigl(\chi^2(\p_r u_z)\omega_z\bigr)\,dx
\,.
\label{eq:aniso:closure:mixed}
\end{align}
The bound~\eqref{eq:aniso:closure:w} allows us to estimate the swirl in $L^\infty$. The term containing $\p_z\chi$ is bounded by $C$, as noted at the start of the proof, while for the other two terms, \eqref{eq:aniso:closure:elliptic} together with $|\p_z\p_r u_z|\le|\nabla^2b|$ and $|\p_r u_z|\le|\nabla b|$ gives
\begin{align}
|I|
&\le C+W\norm{\chi\p_z\p_r u_z}_{L^2}\norm{\chi\omega_z}_{L^2}
+W\norm{\chi\p_r u_z}_{L^2}\norm{\chi\p_z\omega_z}_{L^2}
\notag\\
&\le C+CW(M+1)^{1/2}Y^{1/2}
+CW(Y+1)^{1/2}M^{1/2}
\notag\\
&\le\tfrac14M+C(W^2+1)(Y+1)
\,.
\label{eq:aniso:closure:mixed:bound}
\end{align}
Returning to~\eqref{eq:aniso:closure:stretch}, it thus remains to bound the last term, which contains $\omega_r/r$. Using~\eqref{eq:aniso:closure:gradient} we get
\begin{align}
2\left|\int\chi^2\frac{u_\theta}{r}\omega_r\omega_\theta\,dx\right|
\le2W\norm{\chi\omega_r/r}_{L^2}\norm{\chi\omega_\theta}_{L^2}
\le2W M^{1/2}Y^{1/2}
\le\tfrac14M+4W^2Y
\,.
\label{eq:aniso:closure:angular:bound}
\end{align}
Inserting~\eqref{eq:aniso:closure:mixed:bound} and~\eqref{eq:aniso:closure:angular:bound} in~\eqref{eq:aniso:closure:tested}, absorbing $\frac12M$ into the left side, we have thus shown that there exists $C_*\ge1$, which depends only on $\chi$, on $M_f$, and on the bounds for $u$ on the cylindrical annulus, such that
\begin{align}
\frac{d}{dt} Y+M\le C_*\bigl(G+W^2+1\bigr)(Y+1)
\,.
\label{eq:aniso:closure:energy}
\end{align}

At this point we fix $\eta=1/(16C_*)\le1/16$; this choice is permissible because $C_*$ was fixed before $\eta$ was chosen. Combining~\eqref{eq:aniso:closure:energy} with the bounds~\eqref{eq:aniso:G:bound} and~\eqref{eq:aniso:closure:w}, Gr\"onwall's inequality gives
\[
Y(t)+1\le\bigl(Y(t_\eta)+1\bigr)
\exp\left(\int_{t_\eta}^t\Bigl(\frac{C_*\eta}{(-s)}+\frac{C_*C_\eta^2}{(-s)^{2\eta}}+C_*\Bigr)ds\right)
\le C_\eta'(-t)^{-C_*\eta}
\]
for all $t\in(t_\eta,0)$, for a suitable constant $C_\eta'>0$. Note that $Y(t_\eta)$ is finite since $u$ is smooth at time $t_\eta$. The first inequality in~\eqref{eq:aniso:closure:elliptic} and the Sobolev inequality then give
\[
\norm{u(\cdot,t)}_{L^6(B(\rho))}
\le\norm{\chi u(\cdot,t)}_{L^6(\RR^3)}
\le C\norm{\nabla(\chi u)(\cdot,t)}_{L^2}
\le C_\eta''(-t)^{-C_*\eta/2}
\]
for all $t\in(t_\eta,0)$. Since $2C_*\eta=1/8<1$, this bound implies the local Ladyzhenskaya-Prodi-Serrin condition
\[
u\in L^4(t_\eta,0;L^6(B(\rho)))
\,,
\]
at the borderline exponent pair $3/6+2/4=1$. Consequently $\norm{u}_{L^4(-r^2,0;L^6(B(r)))}$ tends to zero as $r\downarrow0$, for $r\le\min\{\rho,(-t_\eta)^{1/2}\}$. The interior criterion of Gustafson, Kang, and Tsai~\cite[Theorem~1.1(i), with $p_*=6$ and $q=4$]{GKT07}, applied, as in the proof of Lemma~\ref{lem:aniso:annulus}, at the top center point $(0,0)$ to the pair of footnote~\ref{foot:aniso:pressure}, whose bounded force lies in the parabolic Morrey space $M^{2,2}$ of~\cite[Equation~(6)]{GKT07}, therefore bounds $u$ on a backward cylinder at the origin: $(0,0)$ is a regular point.
\end{proof}

\begin{proof}[Proof of Theorem~\ref{thm:aniso:main}]
Proposition~\ref{prop:aniso:small} with $\rho=1/2$ gives~\eqref{eq:aniso:closure:small} with $\rho_0=1/2$, and Lemma~\ref{lem:aniso:closure} makes $(0,0)$ a regular point.
\end{proof}

Lastly, we make precise a comment that we alluded to in Remark~\ref{rem:aniso:comments}(c).

\begin{remark}[{\bf The meridional bounds suffice}]
\label{rem:interior:meridional}
Theorem~\ref{thm:aniso:main} remains valid if~\eqref{eq:aniso:bounds} is imposed on $u_r$ and $u_z$ only, with no bound on $u_\theta$ or on its derivatives. Indeed, Lemma~\ref{lem:aniso:closure} does not use~\eqref{eq:aniso:bounds}, and Section~\ref{sec:aniso:zoom} uses the bounds on $u_\theta$ in one place only, the estimate~\eqref{eq:aniso:zoom:finite:source:b}, whose role is to bound the swirl source $F_n=S_n^2/R^2$ near the axis; the $S_n$ summand of~\eqref{eq:aniso:zoom:derivatives} is used nowhere else. We replace it as follows. Since $u_r$ vanishes on the axis, the bound on $\p_ru_r$ in~\eqref{eq:aniso:bounds} gives $|u_r/r|\le\Caxi(-t)^{-1}$ on $B(1)$, so that for a fixed $N\ge\Caxi$ the function $(-t)^Nu_\theta$ solves the equation displayed in the proof of Lemma~\ref{lem:aniso:closure}, with $N$ in place of $\eta$, in which the zeroth order coefficient $r^{-2}+u_r/r+N(-t)^{-1}$ is at least $r^{-2}$, and the source is bounded by $M_f$. Moreover, $u$ is bounded on $\partial B(R_*)\times(t_*,0)$, since the radius $R_*$ and the time $t_*$ of Section~\ref{sec:aniso:zoom} are those of Lemma~\ref{lem:aniso:annulus}. Then, Lemma~\ref{lem:aniso:axis}, applied as in the proof of Lemma~\ref{lem:aniso:closure} with $t_*$ in place of $t_\eta$, gives $|u_\theta|\le C_N(-t)^{-N}$ on $B(R_*)\times(t_*,0)$, with $R_*$ and $t_*$ as in Section~\ref{sec:aniso:zoom}. In the variables of \emph{Step~1} this reads $|S_n|\le K_n:=C_N(-t_n)^{1/2+h-N}$ on compact subsets of the slab $-T\le\tau\le-1$, for $n$ large, and together with~\eqref{eq:aniso:zoom:finite:source:a}, $|RS_n|\le D_n:=C_\Gamma\delta_n$, it gives, for every $L>0$,
\begin{align}
\int_0^LF_nR^3\,dR
\le\int_0^L\min\{K_n^2R,D_n^2/R\}\,dR
&\le D_n^2\Big(\tfrac12+\log_+\frac{LK_n}{D_n}\Big)
\notag\\
&\le C_{L,N}(-t_n)^{2h}\big(1+|\log(-t_n)|\big)
\to 0
\,,
\label{eq:interior:source:lone}
\end{align}
where $\log_+a=\max\{0,\log a\}$ and $C_{L,N}$ depends also on $C_\Gamma$. Thus $F_n\to0$ in $L^1_{\rm loc}$ of the lifted variables of~\eqref{eq:aniso:zoom:lifted}, which is all that the passage to the limit in $\p_ZF_n$ requires, while the compactness argument for $\Omega_n$ in \emph{Step~2} is carried out off the axis, where $F_n\le D_n^2/R^4$ is bounded. The rest of the proof is unchanged.
\end{remark}

\appendix
\section{Properties of the OpenAI construction}
\label{app:scales}

The OpenAI manuscript~\cite{OpenAIManuscript} constructs a solution of the forced Navier-Stokes equations which it asserts to be singular at $(x,t)=(0,1)$ in its own time variable; that is, at $(0,0)$ after the time shift in this paper~\eqref{eq:time:shift}. We do not verify the construction in~\cite{OpenAIManuscript}. Instead, this appendix deduces, from statements printed in the OpenAI manuscript, three properties of the velocity field of the OpenAI construction. Claim~\ref{claim:core} is the exact axisymmetry of the velocity on a shrinking core, which is hypothesis~\eqref{eq:interior:core} of Theorem~\ref{thm:main}. Claim~\ref{claim:mean} is the set of anisotropic bounds on the angular mean, which is hypothesis~\eqref{eq:interior:mean:bounds}. Claim~\ref{claim:exterior} is the exact vanishing of the meridional velocity on a fixed open set together with the values on the axis, the two properties Remark~\ref{rem:exterior:nonanalytic} asks of a solution. None of these claims is printed in the OpenAI manuscript in the form in which we state it: each is deduced from definitions and statements printed there, and the justification following each claim gives that deduction, citing every ingredient used. We write that the manuscript \emph{states} a fact only when it is printed at the place cited, and that a fact \emph{follows} when we deduce it.

\subsection{Coordinates, notation, and the decomposition of the velocity}
\label{sec:properties:setup}

\noindent\emph{Time, the exponents, and the similarity coordinates.} The OpenAI manuscript writes $\tau=1-t$ for the time to the singularity in its own time variable. After the shift~\eqref{eq:time:shift} the time to the singularity is $-t$, and with $x$ written in the cylindrical coordinates of Section~\ref{sec:aniso:notation} we set
\begin{subequations}
\label{eq:properties:coordinates}
\begin{align}
&\tau:=-t\,,\qquad
x=(r\cos\theta,r\sin\theta,z)\,,
\notag\\
&A:=\frac12+h\,,\qquad
D:=\frac12-h\,,\qquad
A+D=1\,,\qquad
0<h<\frac1{100}
\,.
\label{eq:properties:exponents}
\end{align}
Every formula written in $\tau$ below therefore reads the same in the time variable of the paper and in that of the manuscript, whereas statements quoted from the manuscript are in its own time variable. The exponent $h$ is fixed by the construction, and $A$, $D$, and the bound on $h$ are the notation of~\cite[p.~7, Theorem~3.1, p.~15, and Theorem~4.6, p.~32]{OpenAIManuscript}. The manuscript works in the similarity coordinates
\begin{align}
q-z^2q^{2h}=\tau\,,\qquad
\eta=\frac{z}{q^{D}}\,,\qquad
X=\frac{r^2}{2q}\,,\qquad
q>0\,,\qquad
-1<\eta<1
\,,
\label{eq:properties:similarity}
\end{align}
which satisfy, with a constant $C_0$ depending only on the parameters of the construction,
\begin{align}
\tau\le q\le C_0\bigl(\tau+|z|^{1/D}\bigr)
\,,\qquad
q\asymp\tau+|z|^{1/D}
\,.
\label{eq:properties:q}
\end{align}
\end{subequations}
The manuscript states $\tau=q(1-\eta^2)$, $z=q^D\eta$, and $X=r^2/(2q)$ at~\cite[equation~(4.1), p.~24]{OpenAIManuscript}, with the ranges $q>0$ and $-1<\eta<1$ at~\cite[equation~(3.2), p.~7]{OpenAIManuscript}; it states at~\cite[p.~7]{OpenAIManuscript} that $q-z^2q^{2h}=\tau$, that this equation determines $q=q(z,\tau)$ uniquely, and that $q\asymp\tau+|z|^{1/D}$; and it states the upper bound of~\eqref{eq:properties:q} with the constant $C_0$ at~\cite[equation~(10.3), p.~118]{OpenAIManuscript}. Since $|\eta|<1$, the relation $\tau=q(1-\eta^2)$ gives $q\ge\tau$; this inequality is printed at~\cite[p.~114, Step~1]{OpenAIManuscript}. Note that $q$ and $\eta$ do not depend on $r$. We write $\mathcal C_\tau$ for the region which the manuscript calls its core, that is, the set of points whose similarity coordinates at the time $-\tau$ satisfy $0\le X\le X_c$ and $|\eta|\le\eta_c$, with the constants $X_c>0$ and $0<\eta_c<1$ fixed at~\cite[p.~8]{OpenAIManuscript}.

\smallskip
\noindent\emph{The local field and the final field.} The OpenAI manuscript first constructs a local velocity $u_{\mathrm{loc}}$ and a local pressure $\pi_{\mathrm{loc}}$ on the domain $\Omega_*$, in the form
\begin{align}
\Omega_*:=\{(x,t)\colon\tau>0\,,\ q<q_*\}
\,,\qquad
u_{\mathrm{loc}}=\curl\mathbf A+\mathbf Be_\theta
\qquad\mbox{on }\Omega_*
\,,
\label{eq:properties:local}
\end{align}
where $q_*>0$ is a constant of the construction, $\mathbf A$ is a vector potential, and $\mathbf B$ is a scalar independent of $\theta$~\cite[Theorem~3.1 and its part~(i) with equation~(3.3), p.~15, and equation~(9.21), p.~115]{OpenAIManuscript}. The manuscript writes these two fields as $\mathcal A$ and $\mathcal B$, and the second of those letters is the drift of Section~\ref{sec:aniso:zoom} in the paper. The final velocity and pressure are
\begin{align}
u=\curl(\chi_x\chi_t\mathbf A)+\chi_x\chi_t\mathbf Be_\theta
=\chi_x\chi_tu_{\mathrm{loc}}+\nabla(\chi_x\chi_t)\times\mathbf A
\,,\qquad
\pi=\chi_x\chi_t\pi_{\mathrm{loc}}
\,,
\label{eq:properties:localization}
\end{align}
where $\chi_x$ is an axisymmetric cutoff in space and $\chi_t$ is a cutoff in time, both equal to one near $(0,1)$~\cite[equation~(10.4) and the display below it, p.~118, and Section~3.5, p.~16]{OpenAIManuscript}. Accordingly, we fix once and for all $R_0>0$ and $\tau_0>0$ such that
\begin{align}
\chi_x=1\quad\mbox{on }B(R_0)
\,,\qquad
\chi_t=1\quad\mbox{for }0<\tau<\tau_0
\,,\qquad
C_0\bigl(\tau_0+R_0^{1/D}\bigr)<\frac{q_*}{2}
\,.
\label{eq:properties:cylinder}
\end{align}
Since $\chi_x\chi_t$ is then identically one on $B(R_0)\times(-\tau_0,0)$, its gradient vanishes there, so that the final fields agree with the local ones by~\eqref{eq:properties:localization}: $(u,\pi)=(u_{\mathrm{loc}},\pi_{\mathrm{loc}})$ on $B(R_0)\times(-\tau_0,0)$. By~\eqref{eq:properties:q} that cylinder lies in $\{q<q_*/2\}$, hence in $\Omega_*$. Such a choice is possible: the manuscript states at~\cite[p.~118]{OpenAIManuscript} that $\chi_x=1$ for $r\le r_0/2$ and $|z|\le z_0/2$, that $\chi_t=1$ for $0\le1-t\le\tau_*/2$, and that $C_0(\tau_*+z_0^{1/D})<q_*/2$, where $r_0$, $z_0$, and $\tau_*$ are the positive constants chosen on that page, the manuscript writing $\tau_0$ for the one we call $\tau_*$; any $R_0\le\min\{r_0,z_0\}/2$ and any $\tau_0\le\tau_*/2$ will then do, since $1/D>0$. All the norms below are spatial $L^\infty$ norms over $B(R_0)$ of the final fields at time $-\tau$, with $0<\tau<\tau_0$, and constants may depend on $R_0$ and $\tau_0$.

\smallskip
\noindent\emph{The three radial regions.} The radial thresholds of the OpenAI construction are
\begin{align}
0<X_a<X_b
\,,\qquad
0<X_a<X_{\mathrm{ext}}<\infty
\,,
\label{eq:properties:regions}
\end{align}
the first pair produced in~\cite[Theorem~4.6, pp.~32--33]{OpenAIManuscript}, whose part~(ii) states that the stress profile $T_0$ vanishes for $X\le X_a$ and for $X\ge X_b$ and is nonzero at every $X_a<X<X_b$, and the second pair in~\cite[Theorem~3.1, p.~15]{OpenAIManuscript}, with $X_{\mathrm{ext}}$ fixed beyond the supports of all the corrections in~\cite[p.~116, Step~4]{OpenAIManuscript}. We write \emph{exterior region} for $\{X\ge X_{\mathrm{ext}}\}$. The manuscript states that in the exterior region the momentum residual of the local pair $(u_{\mathrm{loc}},\pi_{\mathrm{loc}})$ vanishes identically and that
\begin{align}
\mathbf A=0
\,,\qquad
\mathbf B=K(r,\tau)
\,,\qquad
K(r,\tau)=r^{-1-2h}H_{\mathrm{ext}}(\tau/r^2)
\,,
\label{eq:exterior:source:heat}
\end{align}
where $K$ solves the radial swirl heat equation and $H_{\mathrm{ext}}$ is smooth on $[0,\infty)$ with every fixed derivative bounded~\cite[equation~(3.5), p.~15]{OpenAIManuscript}; the same two identities are stated at~\cite[p.~116, Step~4]{OpenAIManuscript}. Since $\mathbf A$ vanishes on the open set $\{X>X_{\mathrm{ext}}\}$, it follows from~\eqref{eq:properties:local} that $u_{\mathrm{loc}}=K(r,\tau)e_\theta$ there; the manuscript states as much in words at~\cite[p.~119]{OpenAIManuscript}.

\smallskip
\noindent\emph{The decomposition of the velocity.} We write $\mathcal P$ for the angular mean of~\eqref{eq:interior:average}, which averages the cylindrical coefficients in $\theta$; it is the operator $\langle\cdot\rangle_\theta$ of the OpenAI manuscript~\cite[equation~(3.7), p.~17]{OpenAIManuscript}, which commutes with the evaluation of the auxiliary variable because that evaluation depends on $(r,t)$ alone~\cite[equation~(6.3), p.~63, and p.~117]{OpenAIManuscript}. Let $u_B$ denote the background velocity of the OpenAI construction, which the manuscript states to be part of a pair of ``smooth axisymmetric fields'' $(u_B,p_B)$ with $\div u_B=0$~\cite[Proposition~5.5, p.~60]{OpenAIManuscript}, and let
\begin{align}
v:=\mathcal Pu
\,,\qquad
w:=u-v
\,,\qquad
m:=v-u_B
\,,\qquad\mbox{so that}\qquad
u=u_B+m+w
\,,\qquad
\mathcal Pw=0
\,.
\label{eq:properties:decomposition}
\end{align}
Thus $v$ and $w$ are the angular mean and the non-axisymmetric part of Section~\ref{sec:aniso:notation}, and $m$ is the correction which separates the angular mean from the background. Since $u_B$ is axisymmetric, $\mathcal Pu_B=u_B$ and $\mathcal Pm=m$. The manuscript builds $u_{\mathrm{loc}}$ from $u_B$ and corrections of two kinds, and we use its names for them: the mean corrections, whose cylindrical coefficients are independent of $\theta$~\cite[Definition~6.4, p.~69]{OpenAIManuscript}, and the wave corrections, which carry the oscillatory phases~\cite[equation~(6.27) and Definition~6.5, p.~70]{OpenAIManuscript} and whose angular mean vanishes~\cite[equation~(9.7), p.~106]{OpenAIManuscript}. Both are supported in the closed annulus $X_a\le X\le X_b$ of~\eqref{eq:properties:regions}, and the manuscript calls them annular~\cite[pp.~115 and~117]{OpenAIManuscript}.

\subsection{Exact axisymmetry on a shrinking core}
\label{sec:properties:structure}

The first claim is hypothesis~\eqref{eq:interior:core} of Theorem~\ref{thm:main}, with $\rho(t)=c\sqrt{-t}$; see item~(c) of Remark~\ref{rem:main:comments}.

\begin{claim}[{\bf Exact axisymmetry on a shrinking core}]
\label{claim:core}
Let $R_0,\tau_0$ be as in~\eqref{eq:properties:cylinder}, let $X_a$ be as in~\eqref{eq:properties:regions}, and let
\begin{align}
0<c<\sqrt{2X_a}
\,,\qquad
\tau_0(c):=\min\{\tau_0,R_0^2/c^2\}
\,.
\label{eq:properties:core:parameters}
\end{align}
Then the velocity $u$ of the OpenAI construction, its angular mean $v$, and the background velocity $u_B$ of~\eqref{eq:properties:decomposition} satisfy
\begin{align}
u=v=u_B
\qquad\mbox{on }B(c\sqrt\tau)\times\{-\tau\}
\,,\qquad
0<\tau<\tau_0(c)
\,.
\label{eq:properties:core}
\end{align}
Moreover $w(\cdot,t)=0$ on $B(c\sqrt{-t})$ for every $t\in(-1,0)$, which is~\eqref{eq:interior:core} with $\rho(t)=c\sqrt{-t}$.
\end{claim}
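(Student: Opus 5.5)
The plan is to translate the Euclidean ball $B(c\sqrt\tau)$ into the similarity coordinates~\eqref{eq:properties:similarity}, and show that it lies in the region $\{X<X_a\}$. There the corrections vanish and the exterior formula is not in play, so $u_{\mathrm{loc}}$ reduces to the background $u_B$. We then use~\eqref{eq:properties:cylinder} to identify $u$ with $u_{\mathrm{loc}}$.

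\emph{Localization.} Fix $0<\tau<\tau_0(c)$ and a point $x$ with $|x|<c\sqrt\tau$. Since $\tau<R_0^2/c^2$, we have $|x|<R_0$, and since $\tau<\tau_0$, the point $(x,-\tau)$ lies in the cylinder $B(R_0)\times(-\tau_0,0)$. On that cylinder $\chi_x\chi_t\equiv1$, so by~\eqref{eq:properties:localization} we get $u=u_{\mathrm{loc}}$ there. By~\eqref{eq:properties:q} the cylinder also lies in $\Omega_*$, so the representation~\eqref{eq:properties:local} is available.

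\emph{The radial bound.} Write $r\le|x|<c\sqrt\tau$. By~\eqref{eq:properties:q} we have $q\ge\tau$, hence
\[
X=\frac{r^2}{2q}\le\frac{r^2}{2\tau}<\frac{c^2}{2}<X_a .
\]
The ball is open, so this holds on a neighborhood of each of its points. The mean and wave corrections are supported in $X_a\le X\le X_b$, and in the manuscript's decomposition $u_{\mathrm{loc}}$ equals $u_B$ plus these annular corrections. Therefore $u_{\mathrm{loc}}=u_B$ on $\{X<X_a\}$ within $\Omega_*$.

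This step is the one I expect to be delicate. It requires citing precisely that the decomposition $u_{\mathrm{loc}}=u_B+(\text{mean})+(\text{wave})$ holds with no further correction inside $X<X_a$. In particular, any corrector or cutoff attached to $X_{\mathrm{ext}}$ must be supported away from $X<X_a$, which I would check using $X_a<X_{\mathrm{ext}}$ in~\eqref{eq:properties:regions}. I would try first to read this off from the construction of $\mathbf A,\mathbf B$ in~\cite[Theorem~3.1 and pp.~115--117]{OpenAIManuscript}, by verifying that the potential $\mathbf A$ and the scalar $\mathbf B$ coincide with those of the background for $X<X_a$.

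\emph{Conclusion.} Since $u_B$ is axisymmetric and $B(c\sqrt\tau)$ is rotation invariant, on this ball $u=u_B$ and
\[
v=\mathcal Pu=\mathcal Pu_B=u_B .
\]
Here the angular mean at a point involves only values of $u$ on the circle through that point, and that circle stays in the ball. This proves~\eqref{eq:properties:core}, and it gives $w=u-v=0$ on $B(c\sqrt{-t})$ for $-\tau_0(c)<t<0$.

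For the remaining times $t\in(-1,-\tau_0(c)]$, I would shrink the core radius to $\rho(t)=\min\{c\sqrt{-t},\,c\sqrt{\tau_0(c)}\}$, or use the rescaling of the footnote to the unit cylinder, which only reduces the time range to $(-\tau_0(c),0)$. The hypothesis~\eqref{eq:interior:core} only asks for some $\rho(t)>0$, so either choice suffices for the application.
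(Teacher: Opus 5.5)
Your argument for \eqref{eq:properties:core} is the paper's. You localize to $B(R_0)\times(-\tau_0,0)$, where $u=u_{\mathrm{loc}}$, use $q\ge\tau$ to get $X<c^2/2<X_a$ on the ball, and then use the vanishing of the annular corrections there. On the step you flag as delicate: the fact to cite is that the velocity increments are curls of potentials which themselves vanish on $\{X<X_a\}$ (for the mean correction, through the compactly supported radial primitive $I_c$). Nothing attached to $X_{\mathrm{ext}}$ enters, since $X_{\mathrm{ext}}$ lies beyond all the corrections.

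\textbf{The gap.} The final sentence of the claim asserts $w(\cdot,t)=0$ on $B(c\sqrt{-t})$ for \emph{every} $t\in(-1,0)$, and you prove this only for $-\tau_0(c)<t<0$.
\begin{itemize}
\item Your first patch shrinks the radius to $\min\{c\sqrt{-t},c\sqrt{\tau_0(c)}\}$. This does not address what breaks at earlier times. Your argument runs through $u=u_{\mathrm{loc}}$, which needs $\chi_x\chi_t\equiv1$. For $-t\ge\tau_0$ the time cutoff $\chi_t$ need not equal one. Points of the ball may also lie outside $\Omega_*$, where $q\ge-t$ may exceed $q_*$ and $u_{\mathrm{loc}}$ is undefined. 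Shrinking the radius changes neither fact.
\item Your second patch restricts to $(-\tau_0(c),0)$ and rescales. That is enough for applying Theorem~\ref{thm:main}, and it is how Section~\ref{sec:properties:criteria} actually applies it. But it proves a weaker statement than the one claimed.
\end{itemize}

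\textbf{The missing idea.} The vanishing of $w$ never requires identifying $u$ with $u_{\mathrm{loc}}$ or with $u_B$; the restriction $\tau<\tau_0(c)$ is needed only for that identification.
\begin{itemize}
\item The bound $X<c^2/2<X_a$ on $B(c\sqrt{-t})$ uses only $q\ge-t$, so it holds at every $t\in(-1,0)$. At the points of the ball lying in $\Omega_*$, the potentials $\mathbf A$ and $\mathbf B$ are therefore those of the background, hence independent of $\theta$.
\item Now use the exact form~\eqref{eq:properties:localization}, $u=\curl(\chi_x\chi_t\mathbf A)+\chi_x\chi_t\mathbf Be_\theta$, with three facts from the manuscript: $\chi_x$ is a smooth function of $(r^2,z)$, $\chi_t$ depends on $t$ alone, and the support of $\chi_x\chi_t$ lies in $\Omega_*$.
\item Multiplying $\theta$-independent potentials by an axisymmetric cutoff and taking the curl gives an axisymmetric field. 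Off the support of $\chi_x\chi_t$ the velocity vanishes identically.
\end{itemize}
Hence $u(\cdot,t)$ is axisymmetric on the rotation-invariant ball $B(c\sqrt{-t})$, and $w(\cdot,t)=0$ there, for every $t\in(-1,0)$.
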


\begin{proof}[Justification of Claim~\ref{claim:core}]
The ingredients are the following statements of the OpenAI manuscript: the mean coefficients are independent of $\theta$ and are supported in $X_a\le X\le X_b$~\cite[Definition~6.4 and equation~(6.24), p.~69]{OpenAIManuscript}; the wave amplitudes are independent of $\theta$ and are supported in the same shell~\cite[equations~(6.28)--(6.29) and Definition~6.5, p.~70]{OpenAIManuscript}; the wave velocity increments are curls of potentials supported in that shell, and the mean velocity correction is the sum of a curl of an azimuthal potential, built through the compactly supported radial primitive $I_c$ of~\cite[equation~(8.4) with Lemma~8.2, p.~90]{OpenAIManuscript}, which vanishes for $X\le X_a$ and for $X\ge X_b$, and of a direct azimuthal field whose coefficient is supported in the same shell~\cite[Definition~9.4 and equation~(9.9), p.~106]{OpenAIManuscript}, so that ``all annular representatives vanish in a neighborhood of the axis at each $t<1$''~\cite[p.~115]{OpenAIManuscript}; and the background $(u_B,p_B)$ is a pair of smooth axisymmetric fields~\cite[Proposition~5.5, p.~60]{OpenAIManuscript}.

Let $0<\tau<\tau_0(c)$ and $|x|<c\sqrt\tau$. Since $\tau<R_0^2/c^2$ we have $c\sqrt\tau<R_0$, so that $x\in B(R_0)$ and $u=u_{\mathrm{loc}}$ at $(x,-\tau)$ by~\eqref{eq:properties:cylinder}. Moreover $r\le|x|<c\sqrt\tau$, and $q\ge\tau$ by~\eqref{eq:properties:q}, so that
\begin{align*}
X=\frac{r^2}{2q}<\frac{c^2\tau}{2q}\le\frac{c^2}{2}<X_a
\,,
\end{align*}
the last inequality by~\eqref{eq:properties:core:parameters}. At such points every mean correction and every wave correction vanishes, hence $m=w=0$ in~\eqref{eq:properties:decomposition} and the velocity is the background $u_B$, which is axisymmetric. This proves~\eqref{eq:properties:core}.

The restriction $\tau<\tau_0(c)$ enters only through the equality $u=u_{\mathrm{loc}}$, and the vanishing of $w$ does not need it. Let $t\in(-1,0)$ and $\tau=-t$. The computation above gives $X<X_a$ on $B(c\sqrt\tau)$ for every such $\tau$, so that at the points of that ball which lie in $\Omega_*$ the potentials $\mathbf A$ and $\mathbf B$ are those of the background and are independent of $\theta$. By~\eqref{eq:properties:localization} the final velocity on that ball is the curl of $\chi_x\chi_t\mathbf A$ plus $\chi_x\chi_t\mathbf Be_\theta$, where $\chi_x$ is axisymmetric, being a smooth function of $r^2$ and $z$~\cite[p.~118]{OpenAIManuscript}, $\chi_t$ is a function of $t$, and the support of $\chi_x\chi_t$ lies in $\Omega_*$~\cite[p.~118]{OpenAIManuscript}; a curl of an axisymmetric field is axisymmetric, and the final velocity vanishes off the support of $\chi_x\chi_t$. Hence $w(\cdot,t)=0$ on $B(c\sqrt{-t})$ at every $t\in(-1,0)$.

The OpenAI manuscript states ``all the annular corrections vanish near the axis for each fixed $t<1$''~\cite[p.~117]{OpenAIManuscript}, ``the annular corrections vanish in the fixed inner similarity region''~\cite[p.~124]{OpenAIManuscript}, and, at one fixed radius $X_{\mathrm{in}}\in(0,X_a)$, ``All annular corrections vanish there''~\cite[p.~116, Step~5]{OpenAIManuscript}. The vanishing on the whole set $\{X<X_a\}$, and with it the axisymmetry of the final velocity on the core, is what we deduce above.
\end{proof}

\subsection{The anisotropic bounds on the angular mean}
\label{sec:properties:values}

The second claim is hypothesis~\eqref{eq:interior:mean:bounds} of Theorem~\ref{thm:main}, with the exponent $h$ of the OpenAI construction; see item~(b) of Remark~\ref{rem:main:comments}. Theorem~\ref{thm:main} allows $0<h<1/2$, so the range $0<h<1/100$ of~\eqref{eq:properties:exponents} is admissible in Theorem~\ref{thm:main}.

\begin{claim}[{\bf Anisotropic bounds on the angular mean}]
\label{claim:mean}
Let $A,D,h$ be as in~\eqref{eq:properties:exponents}, let $R_0,\tau_0$ be as in~\eqref{eq:properties:cylinder}, and let $\beta_r=\frac12$ and $\beta_\theta=\beta_z=A$. There is a constant $C>0$, depending on $R_0$, on $\tau_0$, and on the parameters of the OpenAI construction (but not on $\tau$), such that the cylindrical components $v_j$ of the angular mean $v=\mathcal Pu$ of the velocity constructed in~\cite{OpenAIManuscript} satisfy
\begin{align}
\norm{\p_r^a\p_z^bv_j(\cdot,-\tau)}_{L^\infty(B(R_0))}
\le C\tau^{-\beta_j-a/2-bD}
\,,\qquad
j\in\{r,\theta,z\}
\,,\quad a+b\le2
\,,\quad 0<\tau<\tau_0
\,.
\label{eq:properties:mean:derivatives}
\end{align}
Since $D=\frac12-h$, these are the bounds~\eqref{eq:interior:mean:bounds} on the cylinder $B(R_0)\times(-\tau_0,0)$ (and hence on $\mathcal Q$ after the scaling of Section~\ref{sec:properties:criteria}), with the lengths $\ell_r=\tau^{1/2}$ and $\ell_z=\tau^{1/2-h}$ of~\eqref{eq:aniso:lengths}.
\end{claim}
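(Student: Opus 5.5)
The plan is to use the decomposition~\eqref{eq:properties:decomposition}: since $\mathcal P$ is linear and $\mathcal Pw=0$, $\mathcal Pu_B=u_B$, and $\mathcal Pm=m$, we have $v=u_B+m$ on $B(R_0)\times(-\tau_0,0)$. It therefore suffices to prove~\eqref{eq:properties:mean:derivatives} separately for the background $u_B$ and for the mean correction $m$. On this cylinder $u=u_{\mathrm{loc}}$ by~\eqref{eq:properties:cylinder}, so no cutoff terms $\nabla(\chi_x\chi_t)\times\mathbf A$ enter. We will work throughout in the similarity coordinates~\eqref{eq:properties:similarity}, in which the background and the mean profiles are written as $\tau$-independent (or slowly modulated) profiles of $(X,\eta)$ multiplied by powers of $q$. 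The claimed exponents should then come from the chain rule together with the comparison $q\ge\tau$ of~\eqref{eq:properties:q}.

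\textbf{Background.} We will take from the manuscript the self-similar form of $u_B$, expected to be
\[
u_{B,r}\sim q^{-1/2}\,\mathcal U_r(X,\eta),\qquad
u_{B,\theta},\ u_{B,z}\sim q^{-A}\,\mathcal U_{\theta,z}(X,\eta),
\]
with profiles having bounded derivatives, as given by the statements around~\cite[Proposition~5.5]{OpenAIManuscript}. Then:
\begin{itemize}[leftmargin=2em]
\item $\p_r=(r/q)\p_X$, and $r/q=\sqrt{2X/q}$; on the support, or using the decay of the profile in $X$, each $\p_r$ costs $q^{-1/2}$.
\item $\p_z$ acts through $\p_z q$ and $\p_z\eta$. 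Differentiating $q-z^2q^{2h}=\tau$ gives $|\p_zq|\lesssim q^{1-D}$ and $|\p_z\eta|\lesssim q^{-D}$, so each $\p_z$ costs $q^{-D}$, and it also hits $X$ through $q$ with the same cost.
\end{itemize}
Since all the exponents are nonpositive powers of $q$ and $q\ge\tau$, every $q^{-\gamma}$ is bounded by $\tau^{-\gamma}$. This yields $\tau^{-\beta_j-a/2-bD}$ for $a+b\le2$.

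\textbf{Mean correction.} For $m$ we use that the mean corrections are supported in $X_a\le X\le X_b$ and carry the same, or better, amplitude scalings, with $\theta$-independent coefficients as in~\cite[Definition~6.4 and Definition~9.4]{OpenAIManuscript}. The estimate then reduces to the derivative bounds of those coefficients stated in the manuscript's estimates section. The same chain-rule computation applies.

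\textbf{Main obstacle.} The hardest step is extracting from the manuscript the precise amplitude exponents of $m$, in particular confirming that $m_r$ carries only $q^{-1/2}$ rather than $q^{-A}$. We must also check that the correction terms include their small prefactors (powers of $q^{h}$ or a small parameter) uniformly in $\eta$ up to $|\eta|\to1$, where $q\gg\tau$ is harmless. A further point to check is that the radial derivative of the meridional component $u_r=-\p_z(\text{stream})/r$ does not pick up an extra $q^{-h}$. If the manuscript only bounds the correction in a weighted norm, we would try first to convert that norm to pointwise $C^2$ bounds via its stated Sobolev embedding in similarity variables.
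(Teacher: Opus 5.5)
\textbf{Verdict: genuine gap in the correction step.} Your architecture matches the paper's. You split $v=u_B+m$, convert the manuscript's bounds in $(q,X,\eta)$ into physical derivatives with each $\p_r$ costing $q^{-1/2}$ and each $\p_z$ costing $q^{-D}$, and pass from $q$ to $\tau$ via $q\ge\tau$. The background step is right in substance.

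\textbf{The correction $m$.} You dispose of it with ``the same chain-rule computation applies'' and ``the same, or better, amplitude scalings.'' That is not what the manuscript provides. Its mean corrections are controlled on dyadic charts $Q=2^{-\ell}$, where a radial physical derivative costs $Q^{-1/2-h\kappa_s}$, not $Q^{-1/2}$, and the bounds carry powers of $S_*=\ell^2$, i.e.\ of $|\log q|$. With merely ``the same'' amplitude $Q^{-A}$, these losses break the bound. Moreover $m_r$ would then only be $O(q^{-A})=O(q^{-1/2-h})$, which misses $\beta_r=\tfrac12$; this is exactly the point you list as the main obstacle and leave open.

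The missing ingredient is that the mean correction is small relative to the background: a factor $\varepsilon^{0.9}=Q^{0.9h}$ in the $\theta$ and $z$ components and $\varepsilon^{1.9}$ in the $r$ component, in every coefficient derivative. This gives $|\p_r^a\p_z^bm_j|\le Cq^{-A-a/2-bD+(\alpha_j-a\kappa_s)h}(1+|\log q|)^P$ with $\alpha_r=1.9$ and $\alpha_\theta=\alpha_z=0.9$. Since $-A+1.9h=-\tfrac12+0.9h$, the radial row has rate $\beta_r=\tfrac12$. In every row the surplus $(0.9-a\kappa_s)h>0$ absorbs both the radial-derivative loss and the logarithms. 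So the extra radial loss you hoped to rule out is in fact present; it is paid for by this smallness, not absent.

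\textbf{Coverage steps you skip.} The manuscript's statements hold as $q\downarrow0$, on bounded $X$-ranges, and stage by stage, so four further pieces are needed.
\begin{itemize}
\item The region $\{q\ge q_1\}$. Here the manuscript's Theorem~3.1(ii) gives Cartesian derivatives bounded up to the blowup time, and these bound the cylindrical derivatives of $v$ and of its angular mean.
\item The stage sum. The correction is an infinite sum over stages glued by cutoffs $\chi(a_jq)$, so the structural description of a single mean correction applies only after truncating at a finite stage $J$. The tail is $O(q^{N_J})$ in $C^2$; choosing $N_J\ge0$ makes it bounded, and it is handled by the previous item's reduction.
\item The axis. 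There $u_{B,r}$ and $u_{B,\theta}$ are $\sqrt X$ times functions smooth in $X$, so profiles with bounded $X$-derivatives are not literally available. One uses instead the expansion in which $u_{r,n}/r$ and $u_{\theta,n}/r$ are smooth up to $X=0$; the chain rule still closes, but through this structure.
\item The exterior $X\ge X_{\rm ext}$. Your appeal to ``decay of the profile in $X$'' is made rigorous by $u_B=K(r,\tau)e_\theta$, together with $r^2\ge2X_{\rm ext}q$ and $\p_zK=0$.
\end{itemize}
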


\begin{proof}[Justification of Claim~\ref{claim:mean}]
The OpenAI manuscript states its bounds separately for the background and for the corrections of each stage, and we assemble from them a bound on the angular mean of the final velocity. By~\eqref{eq:properties:cylinder} we have $u=u_{\mathrm{loc}}$ on $B(R_0)\times(-\tau_0,0)$, and by~\eqref{eq:properties:decomposition} we have $v=u_B+m$ there; we bound $u_B$ and $m$ in turn. Two reductions serve both steps.

First, where the Cartesian derivatives of a field are bounded uniformly in $\tau$, so are the cylindrical derivatives in~\eqref{eq:properties:mean:derivatives}. The frame $(e_r,e_\theta,e_z)$ depends on $\theta$ alone, hence is constant along radial rays, so that $(e_r\cdot\nabla)e_j=0$ and
\begin{align*}
\p_r^a\p_z^bg_j=\bigl((e_r\cdot\nabla)^a\p_z^bg\bigr)\cdot e_j
\qquad\mbox{at every point with }r>0
\end{align*}
for every smooth vector field $g$ and every $j\in\{r,\theta,z\}$. The right side is a combination, with coefficients the components of the unit vector $e_r$, of the Cartesian derivatives of $g$ of order $a+b$; since the axis is a null set, this gives $\|\p_r^a\p_z^bg_j\|_{L^\infty(S)}\les\|g\|_{C^{a+b}(S)}$, the axis included, on every set $S\subset B(R_0)$ which is invariant under rotations about the axis. The same bound holds for $\p_r^a\p_z^b(\mathcal Pg)_j$, since $(\mathcal Pg)_j$ is the average of $g_j$ in $\theta$ (see Section~\ref{sec:aniso:notation}), and this average commutes with $\p_r$ and $\p_z$. Since $q$ is a function of $(z,\tau)$ alone, the sets $B(R_0)\cap\{q<c\}$ and $B(R_0)\cap\{q\ge c\}$, $c>0$, to which we apply this below, are of that kind. Since the norm of $C^{a+b}$ does not distinguish $\p_r$ from $\p_z$, and since a Cartesian component mixes $g_r$ with $g_\theta$, whose exponents $\beta_r$ and $\beta_\theta$ differ, the rates of~\eqref{eq:properties:mean:derivatives} do not pass through this reduction: we use it only for fields whose Cartesian derivatives of order at most two are bounded uniformly in $\tau$, and we obtain the bounds with rates in Steps~1 and~2 directly for the cylindrical components of $u_B$ and of $m$.

Second, the bounds quoted below are stated in the OpenAI manuscript as $q\downarrow0$, and we must cover the whole cylinder. By~\eqref{eq:properties:cylinder} we have $q<q_*/2$ on $B(R_0)\times(-\tau_0,0)$, and for every $q_1\in(0,q_*/2)$ the manuscript states that every Cartesian space-time derivative of $\mathbf A$, $\mathbf Be_\theta$, and $\pi_{\mathrm{loc}}$ is bounded up to the blowup time on compact spatial subsets with $q_1\le q\le q_*/2$~\cite[Theorem~3.1(ii), p.~15]{OpenAIManuscript}, hence so is every such derivative of $u$ by~\eqref{eq:properties:local} and~\eqref{eq:properties:cylinder}, and therefore, by the first reduction, so is every $\p_r^a\p_z^bv_j$. Since $\tau^{-\beta_j-a/2-bD}\ge\tau_0^{-\beta_j-a/2-bD}$ for $0<\tau<\tau_0$, a bounded quantity obeys~\eqref{eq:properties:mean:derivatives}, so that~\eqref{eq:properties:mean:derivatives} holds where $q\ge q_1$. Only the region $q<q_1$ remains, and Step~2 fixes the value of $q_1$.

\smallskip
\noindent\emph{Step 1: the background.} The OpenAI manuscript states the rates of its leading field $u^{(0)}$ on the region $\mathcal C_\tau$, $\|u^{(0)}_\theta\|\asymp\|u^{(0)}_z\|\asymp\tau^{-A}$ and $\|u^{(0)}_r\|=O(\tau^{-1/2})$~\cite[p.~8]{OpenAIManuscript}, and, on a fixed radial enlargement of the closed annulus $X_a\le X\le X_b$ and as $q\downarrow0$, that the cylindrical components $u_{r,B},u_{\theta,B},u_{z,B}$ of $u_B$ are such that $q^Au_{\theta,B}$, $q^Au_{z,B}$, and $q^{1/2}u_{r,B}$ differ from their leading profiles by $O(q^{2h})$ in every fixed composition of the derivatives $q\p_q,\p_X,\p_\eta$~\cite[equation~(5.42), p.~60]{OpenAIManuscript}, those profiles being bounded on that enlargement together with every fixed derivative in $(X,\eta)$~\cite[Theorem~4.6(i), p.~33]{OpenAIManuscript}. For the radial component we use only the last line of~\cite[equation~(5.42), p.~60]{OpenAIManuscript}, $q^Au_{r,B}=O(q^h)$ in the same sense, which gives $\beta_r=\frac12$ since $A-h=\frac12$. By~\eqref{eq:properties:similarity}, on functions of $(q,X,\eta)$ we have $\p_r=(2X/q)^{1/2}\p_X$ and $\p_z=q^{-D}L^{-1}\bigl(2\eta q\p_q-2\eta X\p_X+(1-\eta^2)\p_\eta\bigr)$, where $L=1-2h\eta^2\ge1-2h$, so that the conversion to physical derivatives of a cylindrical component costs $q^{-1/2}$ for each $\p_r$ and $q^{-D}$ for each $\p_z$~\cite[equation~(5.26), p.~55]{OpenAIManuscript}. Between the axis and the enlarged annulus the same bounds follow, upon using $r^2=2Xq$, from the expansion~\cite[equation~(5.1), p.~46]{OpenAIManuscript}, in whose term of order $n$ each of $q^{A-2nh}u_{z,n}$, $q^{A+1/2-2nh}u_{\theta,n}/r$, and $q^{1-2nh}u_{r,n}/r$ is a profile smooth in $(X,\eta)$ up to $X=0$~\cite[Theorem~4.6(i), p.~33, and equation~(5.17), p.~52]{OpenAIManuscript}, and from the tail~\cite[equations~(5.34)--(5.35), pp.~57--58]{OpenAIManuscript}, which past a large enough order is bounded in every Cartesian derivative of order at most two and is covered by the first reduction. In the exterior region $X\ge X_{\mathrm{ext}}$ the corrections vanish, since $X_{\mathrm{ext}}$ lies beyond their supports, so that $u_B=u_{\mathrm{loc}}=K(r,\tau)e_\theta$ by~\eqref{eq:exterior:source:heat}, and the bounds follow from~\cite[equation~(3.5), p.~15]{OpenAIManuscript} with~\cite[equations~(10.7)--(10.8), p.~119]{OpenAIManuscript}, the radial derivatives of $K$ coming from~(10.7), $\p_zK$ being zero, and $r^2\ge2X_{\mathrm{ext}}q$ there. In~\cite[Proposition~5.5, p.~60]{OpenAIManuscript} the outer endpoint of the enlargement is arbitrary, the constants there depending on it, and we fix it beyond $X_{\mathrm{ext}}$; the three regions then cover every point of $B(R_0)$. Every power of $q$ so produced has exponent at least $-\beta_j-a/2-bD<0$. Since $\tau\le q<q_1\le1$ by~\eqref{eq:properties:q}, each such power is at most $\tau^{-\beta_j-a/2-bD}$, and these statements together give~\eqref{eq:properties:mean:derivatives} for $u_B$ on $\{q<q_1\}$.

\smallskip
\noindent\emph{Step 2: the correction.} The OpenAI manuscript builds its local field by an induction whose steps it calls stages, and it states that at each finite stage the normalized velocity is the sum of the background of~\cite[Proposition~5.5, p.~60]{OpenAIManuscript}, held fixed throughout the construction~\cite[p.~62]{OpenAIManuscript}, of a mean correction whose coefficients are independent of $\theta$, and of a wave whose angular mean vanishes~\cite[equation~(9.7) and Definition~9.4, p.~106, with Definition~6.4, p.~69]{OpenAIManuscript}. The stages are summed in~\cite[Lemma~5.4 and equations~(5.34)--(5.35), pp.~57--58]{OpenAIManuscript}, as applied in~\cite[Proposition~9.9, p.~114]{OpenAIManuscript}. The summation cutoffs are of the form $\chi(a_jq)$, functions of $q(z,\tau)$ alone, with $\chi$ equal to one on $[0,1/2]$ and $a_{j+1}\ge2a_j$~\cite[equation~(9.21), p.~115, and Lemma~5.4, p.~57]{OpenAIManuscript}, so that for each stage $J$ there is $q_J>0$ with $\chi(a_jq)=1$ for every $j\le J$ and every $0<q<q_J$; the single cutoff applied to the fields with which the summation starts likewise equals one near $q=0$~\cite[p.~114, Step~1, and p.~115]{OpenAIManuscript}. The tail estimate~\cite[equation~(5.35), p.~58]{OpenAIManuscript} then gives, in every fixed order of Cartesian derivatives, that on $\{q<q_J\}$ the summed field differs from the stage-$J$ field by $O(q^{N_J})$, with $N_J\to\infty$ as $J\to\infty$. We now fix $J$ so large that $N_J\ge0$ for every derivative order at most two, and we set $q_1=q_J$, decreased if necessary so that $q_1<\min\{1,q_*/2\}$, so that the single cutoff above equals one on $\{q<q_1\}$, and so that the statements quoted in Step~1, which hold for small $q$, hold on $\{q<q_1\}$. Applying $\mathcal P$ to the stage-$J$ field, and using $\mathcal Pu_B=u_B$ from~\eqref{eq:properties:decomposition}, we obtain that on $\{q<q_1\}$ the correction $m$ is the mean correction of that stage, up to the angular mean of an error whose Cartesian derivatives of order at most two are $O(q^{N_J})$, hence bounded; by the first reduction, this term obeys~\eqref{eq:properties:mean:derivatives}.

It remains to bound the mean correction of a fixed stage. The OpenAI manuscript works on dyadic regions indexed by an integer $\ell$, with
\begin{align*}
Q=2^{-\ell}
\,,\qquad
\eps=Q^h
\,,\qquad
S_*=\ell^2
\,,\qquad
\kappa_s=10^{-5}
\,,
\end{align*}
on each of which a physical velocity is $Q^{-A}$ times its representative in the rescaled coordinates of~\cite[equation~(6.1), p.~62, and the text following it and equation~(6.2), p.~63]{OpenAIManuscript}, and $q\asymp Q$ on the supports of the corrections~\cite[pp.~62 and~113]{OpenAIManuscript}. The manuscript states that the radial component of the mean correction carries a factor $\eps^{1.9}$ and its azimuthal and axial components a factor $\eps^{0.9}$, in every coefficient derivative, times a power of $S_*$ and a weight which is bounded on the annulus $X_a<X<X_b$~\cite[equation~(9.9), p.~106, with Definition~6.4 and equations~(6.23)--(6.24), p.~69]{OpenAIManuscript}, and that a radial physical derivative loses a factor $Q^{-1/2-h\kappa_s}$ and an axial one a factor $Q^{-D}$~\cite[equation~(9.19), p.~113]{OpenAIManuscript}. With the bounded error of the previous paragraph, it follows that on $\{q<q_1\}$, on the annulus and trivially off it since the mean coefficients vanish there,
\begin{align}
\abs{\p_r^a\p_z^bm_j}
\le Cq^{-A-a/2-bD+(\alpha_j-a\kappa_s)h}\bigl(1+\abs{\log q}\bigr)^{P}
\,,\qquad
\alpha_r=1.9
\,,\quad
\alpha_\theta=\alpha_z=0.9
\,,\quad
a+b\le2
\,,
\label{eq:properties:mean:error}
\end{align}
where $P$ depends on $a+b$ and on the stage $J$ fixed above. The radial row of~\eqref{eq:properties:mean:error} carries $\beta_r=\frac12$ because $-A+(1.9-a\kappa_s)h=-\frac12+(0.9-a\kappa_s)h$. In every row the exponent of $q$ in~\eqref{eq:properties:mean:error} is negative, and it exceeds $-\beta_j-a/2-bD$ by $(0.9-a\kappa_s)h>0$; since $\tau\le q<q_*/2$, the powers of $q$ become powers of $\tau$ and the logarithm is absorbed. Thus $m$ satisfies~\eqref{eq:properties:mean:derivatives} on $\{q<q_1\}$, and with Step~1 so does $v=u_B+m$. Together with the second reduction above, which covers $\{q\ge q_1\}$, this proves~\eqref{eq:properties:mean:derivatives} on $B(R_0)\times(-\tau_0,0)$.
\end{proof}

\subsection{Exact vanishing of the meridional velocity, and the values on the axis}
\label{sec:exterior:source}

Remark~\ref{rem:exterior:nonanalytic} asks two things of a solution: the exact vanishing~\eqref{eq:exterior:vanishing} of the meridional components $u_r$ and $u_z$ on one open set off the axis which does not depend on time, and a time at which $u_z$ does not vanish at the origin. The third claim supplies both.

\begin{claim}[{\bf Vanishing of the meridional velocity on a fixed open set, and the values on the axis}]
\label{claim:exterior}
Let $R_0,\tau_0$ be as in~\eqref{eq:properties:cylinder}, and let $u$ be the velocity of the OpenAI construction.
\begin{enumerate}[label=(\alph*),leftmargin=2em]
\item Let $0<r_-<r_+<R_0$. There exist $z_*>0$ and $0<\delta_*\le\tau_0$ such that the open set
\begin{align*}
E:=\{x\in\RR^3\colon r_-<r<r_+\,,\ |z|<z_*\}
\end{align*}
is contained in $B(R_0)\cap\{r>0\}$, and such that
\begin{align*}
u=K(r,\tau)e_\theta
\,,\qquad\mbox{and in particular}\qquad
u_r=u_z=0
\,,\qquad\mbox{on }E\times(-\delta_*,0)
\,,
\end{align*}
where $K$ is the function in~\eqref{eq:exterior:source:heat}. This is~\eqref{eq:exterior:vanishing} with $R=R_0$ and $\delta=\delta_*$.
\item There is a constant $j_0$ with $0<j_0\le\frac1{20}$ such that
\begin{align}
u_z(0,-\tau)=j_0\tau^{-A}
\,,\qquad
\p_zu_z(0,-\tau)=\frac4\tau
\,,\qquad
0<\tau<\tau_0
\,.
\label{eq:properties:axis}
\end{align}
In particular $u_z(0,t)\ne0$ for every $t\in(-\tau_0,0)$.
\end{enumerate}
\end{claim}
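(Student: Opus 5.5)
For part~(a) I would start from the exterior-region description~\eqref{eq:exterior:source:heat}: wherever $X=r^2/(2q)>X_{\mathrm{ext}}$ and $(x,t)\in\Omega_*$, the potential $\mathbf A$ vanishes on an open set. By~\eqref{eq:properties:local} this gives $u_{\mathrm{loc}}=K(r,\tau)e_\theta$ there. The task is therefore to choose $z_*$ and $\delta_*$ so that the whole set $E\times(-\delta_*,0)$ lies in $\{X>X_{\mathrm{ext}}\}\cap B(R_0)\times(-\tau_0,0)$. On that cylinder $u=u_{\mathrm{loc}}$ by~\eqref{eq:properties:cylinder}, so the claim follows.

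By~\eqref{eq:properties:q}, on $E$ we have $q\le C_0(\tau+|z|^{1/D})\le C_0(\delta_*+z_*^{1/D})$. Hence $X\ge r_-^2/\bigl(2C_0(\delta_*+z_*^{1/D})\bigr)$. I would choose $z_*$ and $\delta_*$ small enough that this lower bound exceeds $X_{\mathrm{ext}}$. I would also require $z_*^2+r_+^2<R_0^2$, so that $E\subset B(R_0)\cap\{r>0\}$, and $\delta_*\le\tau_0$. Openness is not a concern, since the exterior identity holds on the open set $\{X>X_{\mathrm{ext}}\}$ and $E$ sits strictly inside it. The only care point is that $q$ depends on $z$, so the bound must be uniform in $|z|<z_*$; the upper bound in~\eqref{eq:properties:q} supplies exactly that.

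For part~(b), on the axis we have $X=0$, so every mean and wave correction vanishes there, as in the proof of Claim~\ref{claim:core}. Then $u=u_B$ at $(0,-\tau)$ for $0<\tau<\tau_0$. I would then read off $u_{z,B}$ on the axis from the background construction. The relevant statement should be the axis expansion of the background, \cite[equation~(5.1)]{OpenAIManuscript} together with the leading profile of \cite[Theorem~4.6]{OpenAIManuscript}, evaluated at $X=0$ and $z=0$, where $\eta=0$ and $q=\tau$. The expected structure is that the background is exactly self-similar along the axis at $z=0$, with the $O(q^{2h})$ corrections vanishing there, or with $j_0$ defined to include them. This would give $u_z(0,-\tau)=j_0\tau^{-A}$. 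The derivative $\p_zu_z(0,-\tau)$ would come from $\p_z=q^{-D}L^{-1}(2\eta q\p_q-2\eta X\p_X+(1-\eta^2)\p_\eta)$ evaluated at $\eta=X=0$, applied to the profile. Alternatively, on the axis incompressibility gives $\p_zu_z=-2\p_ru_r$, so it equals $-2u_r/r$ at $r\to0$. The value $4/\tau$ should then be the printed normalization of the radial inflow coefficient, namely $u_r\sim-2r/\tau$ near the axis.

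The main obstacle is part~(b). One must locate the exact printed formulas fixing the axis values, including the normalization $0<j_0\le1/20$. One must also confirm that the higher-order terms of the expansion~(5.1) and the stage tails do not perturb the exact identities on the axis at $z=0$. If they did, only asymptotic versions would hold, which would still suffice for $u_z(0,t)\neq0$ at times close to $0$.
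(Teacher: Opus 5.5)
Your part~(a) is correct and follows the paper's argument. The upper bound in~\eqref{eq:properties:q} gives $q<C_0(\delta_*+z_*^{1/D})<r_-^2/(2X_{\mathrm{ext}})$ uniformly on $E\times(-\delta_*,0)$. Hence $X>X_{\mathrm{ext}}$ there, so $\mathbf A=0$ on an open set and $u=u_{\mathrm{loc}}=K(r,\tau)e_\theta$.

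Part~(b) has a genuine gap. Your reduction to $u=u_B$ on the axis is right, since $X=0<X_a$ kills $m$ and $w$ as in Claim~\ref{claim:core}. The operator $\p_z=q^{-D}\p_\eta$ at $\eta=X=0$ is also the right tool. But you never identify what fixes the axial profile on the axis, and that is the entire content of the claim.

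The missing input is the axis datum of the manuscript. The axial profile $U$ is chosen with axis value $U_*=4\eta+j_0$, $0<j_0\le .05$ \cite[Section~B.1 and equation~(B.1)]{OpenAIManuscript}. The analytic profile realizing it has $U-U_*$ vanishing at the axis \cite[Proposition~B.2 and equation~(B.12)]{OpenAIManuscript}. To this one adds two further facts. First, $U_n(0,\eta)=0$ for every $n\ge1$ \cite[Lemma~5.1]{OpenAIManuscript}. Second, the cutoffs $\chi(c_nq)$ in the background depend on $q(z,\tau)$ alone, so no $\p_r$ falls on them in the axial coefficient. Together these give the exact identity $u_z=q^{-A}(4\eta+j_0)$ on the axis segment.

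Setting $z=0$, where $q=\tau$ and $\eta=0$, gives $u_z(0,-\tau)=j_0\tau^{-A}$. For the derivative, $\p_zq=2\eta q^{1-D}/L$ vanishes at $\eta=0$, and $\p_z\eta=q^{-D}$ there. Hence $\p_zu_z(0,-\tau)=4\tau^{-A-D}=4/\tau$.

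Both the coefficient $4$ and the range $0<j_0\le\frac1{20}$ come from this datum. Your incompressibility route ($\p_zu_z=-2\p_ru_r$ on the axis, with the normalization $u_r\sim-2r/\tau$) is consistent with it, but you give no source for that normalization. Your worry about stage tails is moot, since those are annular corrections and vanish identically for $X<X_a$.

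Your fallback of letting $j_0$ absorb the $O(q^{2h})$ corrections does not work. $j_0$ is a constant and~\eqref{eq:properties:axis} is an exact identity for every $0<\tau<\tau_0$, so a $\tau$-dependent correction cannot be absorbed into it. Exactness rests precisely on $U_n(0,\eta)=0$ for $n\ge1$. An asymptotic version would still give $u_z(0,t)\ne0$ for $t$ close to $0$, which is all that Remark~\ref{rem:exterior:nonanalytic} uses. It does not prove~\eqref{eq:properties:axis} as stated.
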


\begin{proof}[Justification of Claim~\ref{claim:exterior}]
\noindent\emph{Part (a).} By~\eqref{eq:exterior:source:heat} and~\eqref{eq:properties:local} we have $u_{\mathrm{loc}}=K(r,\tau)e_\theta$, and in particular $u_{\mathrm{loc},r}=u_{\mathrm{loc},z}=0$, on the open set $\{X>X_{\mathrm{ext}}\}$. Given $0<r_-<r_+<R_0$, we choose $z_*>0$ and $0<\delta_*\le\tau_0$ so small that
\begin{align*}
E\subset B(R_0)
\,,\qquad
C_0\bigl(\delta_*+z_*^{1/D}\bigr)<\frac{r_-^2}{2X_{\mathrm{ext}}}
\,,
\end{align*}
which is possible since $r_+<R_0$ and $1/D>0$. By the upper bound in~\eqref{eq:properties:q}, every point of $E\times(-\delta_*,0)$ has $q<r_-^2/(2X_{\mathrm{ext}})$, and hence $X=r^2/(2q)>r_-^2/(2q)>X_{\mathrm{ext}}$. Since $\delta_*\le\tau_0$ and $E\subset B(R_0)$, we have $u=u_{\mathrm{loc}}$ there by~\eqref{eq:properties:cylinder}, and part~(a) follows.

\smallskip
\noindent\emph{Part (b).} The OpenAI manuscript states $u^{(0)}_\theta=0$ on the axis and $\|u^{(0)}_z\|_{L^\infty}\asymp\tau^{-A}$ on $\mathcal C_\tau$, the lower bound being attributed in the manuscript to the nonzero axis datum~\cite[pp.~7--8]{OpenAIManuscript}; the manuscript does not state the pointwise axial value or its axial derivative on the axis. It chooses the axis datum $U_*=4\eta+j_0$, with $0<j_0\le.05$, as the value at the axis of the axial profile $U$~\cite[Section~B.1 and equation~(B.1), p.~144]{OpenAIManuscript}, this being one of the axis parameters of~\cite[Proposition~4.10, pp.~36--37]{OpenAIManuscript}, and it states that the analytic profile realizing that datum has $U-U_*$ vanishing at the axis~\cite[Proposition~B.2 and equation~(B.12), pp.~146--147]{OpenAIManuscript}; that the axial coefficients of the expansion are $u_{z,n}=q^{-A+2nh}U_n$~\cite[equation~(5.1), p.~46]{OpenAIManuscript}, that these enter the background with the cutoffs of~\cite[equation~(5.45), p.~61]{OpenAIManuscript} as $q^{-A+2nh}\chi(c_nq)U_n$, the axial coefficient carrying no derivative of its cutoff because $q$ does not depend on $r$~\cite[equation~(10.1), p.~117]{OpenAIManuscript}, and that $U_n(0,\eta)=0$ for every $n\ge1$~\cite[Lemma~5.1, p.~47]{OpenAIManuscript}; and that $q=\tau$ and $\eta=0$ at $z=0$~\cite[p.~116, Step~5]{OpenAIManuscript}. Every term with $n\ge1$ in the sum of the axial coefficients therefore vanishes at $X=0$, so that
\begin{align*}
u_{z,B}=q^{-A}U(0,\eta)=q^{-A}(4\eta+j_0)
\qquad\mbox{on the axis}
\,.
\end{align*}
Since $X=0<X_a$ on the axis, $m=w=0$ there as in the justification of Claim~\ref{claim:core}, and $u=u_{\mathrm{loc}}$ by~\eqref{eq:properties:cylinder}; hence $u_z=q^{-A}(4\eta+j_0)$ on the segment $\{r=0,\ |z|<R_0\}$ of the axis, for $0<\tau<\tau_0$. Evaluating at $z=0$, where $q=\tau$ and $\eta=0$, gives the first identity in~\eqref{eq:properties:axis}. For the second, differentiating in $z$ and using $\eta=zq^{-D}$ together with the identity $\p_zq=2\eta q^{1-D}/L$, where $L=1-2h\eta^2$~\cite[equation~(4.1), p.~24, and the proof of Lemma~4.1, p.~25]{OpenAIManuscript}, so that $\p_zq=0$ at $\eta=0$, we obtain $\p_zu_z(0,-\tau)=4\tau^{-A}\tau^{-D}$, which is $4/\tau$ by~\eqref{eq:properties:exponents}.
\end{proof}

\subsection{The blowup and the non-axisymmetric part}
\label{sec:properties:waves}

\noindent\emph{The blowup.} The OpenAI manuscript asserts that, along the points
\begin{align}
x_\tau=(\sqrt{2X_{\mathrm{in}}\tau},0,0)
\,,\qquad
u_\theta(x_\tau,-\tau)=\tau^{-A}\bigl(e_0+O(\tau^{2h})\bigr)\longrightarrow+\infty
\,,
\label{eq:properties:blowup}
\end{align}
where $z=0$, $q=\tau$, and both cutoffs equal one for all sufficiently small $\tau>0$~\cite[equations~(10.20)--(10.21), p.~124]{OpenAIManuscript}, with $X_{\mathrm{in}}\in(0,X_a)$ and $e_0>0$ produced at~\cite[Theorem~3.1(iv) and equation~(3.6), p.~16]{OpenAIManuscript}. Since $\abs{x_\tau}\to0$, the point $(0,0)$ is then singular for the construction.

\smallskip
\noindent\emph{The non-axisymmetric part.} In chart variables the amplitudes of the wave corrections carry a factor $\eps^{1/2}$ relative to the background~\cite[equation~(9.9), p.~106, with Definition~6.5 and equation~(6.29), p.~70]{OpenAIManuscript}. We do not convert this into a bound for $w$ in physical variables: Theorem~\ref{thm:main} uses only the exact axisymmetry on the core of Claim~\ref{claim:core} and the bounds on the angular mean of Claim~\ref{claim:mean}. It is in this sense, exact axisymmetry on a core which collapses at the parabolic rate as $t\uparrow0$, that we call such solutions asymptotically axisymmetric.

\subsection{What the results of the paper give for the OpenAI construction}
\label{sec:properties:criteria}

The OpenAI manuscript states that its velocity and pressure are smooth on $\RR^3\times[0,1)$, divergence free, both of fixed compact spatial support~\cite[Proposition~10.1, p.~117]{OpenAIManuscript}, that its force, the momentum residual~\cite[equation~(10.5), p.~118]{OpenAIManuscript}, lies in $C^\infty_c(\RR^3\times(0,\infty);\RR^3)$~\cite[Lemma~10.3, p.~120]{OpenAIManuscript}, which gives~\eqref{eq:interior:force:c-two}, and that its kinetic energy is bounded on $[0,1)$ and its total dissipation on $[0,1)$ is finite~\cite[Lemma~10.4 and equation~(10.13), p.~121]{OpenAIManuscript}, the velocity part of~\eqref{eq:interior:energy:class}. Since $u$ and $\pi$ are compactly supported, $\pi$ is the decaying solution of the pressure equation $-\Delta\pi=\p_i\p_j(u_iu_j)-\div f$ of~\cite[p.~118]{OpenAIManuscript}, so that $\norm{\pi(\cdot,t)}_{L^{3/2}(B(1))}\les\norm{u(\cdot,t)}_{L^3}^2+1$ by the Calderon-Zygmund inequality and the boundedness of $f$; interpolation places $u$ in $L^3_{x,t}$, and the pressure part of~\eqref{eq:interior:energy:class} follows. Smoothness on $\RR^3\times[-1,0)$ makes the local energy inequality an equality, upon testing~\eqref{eq:nse:forced} against $2u\varphi$, every term being finite by~\eqref{eq:interior:energy:class}; the construction is thus a suitable weak solution on the cylinder $B(R_0)\times(-\tau_0(c),0)$ of Claim~\ref{claim:core}, carried onto $\mathcal Q$ by the scaling in the footnote of Section~\ref{sec:aniso:setup}. Claims~\ref{claim:mean} and~\ref{claim:core} supply~\eqref{eq:interior:mean:bounds} and~\eqref{eq:interior:core}, so Corollary~\ref{cor:interior:nonanalytic} applies: conditionally on the blowup~\eqref{eq:properties:blowup}, the force violates~\eqref{eq:interior:force:analytic} and vanishes identically on no cylinder around $(0,0)$. Claim~\ref{claim:exterior} gives the same failure through Remark~\ref{rem:exterior:nonanalytic}, on every ball $B(R')$ with $0<R'\le R_0$ which meets its set $E$, using neither~\eqref{eq:interior:mean:bounds} nor~\eqref{eq:interior:core} nor the blowup. Were~\eqref{eq:interior:error:bound} to hold on a cylinder $B(R')\times(-\delta',0)$ with $0<R'\le R_0$ and $0<\delta'\le\tau_0(c)$, Remark~\ref{rem:interior:approximate} would make $(0,0)$ a regular point, which would contradict~\eqref{eq:properties:blowup}; hence the non-axisymmetric part of the velocity is not bounded in $C^3$ uniformly in time on any such cylinder.

\section*{Acknowledgments}
The work of P.C. was partially supported by NSF grant DMS-2606072. The work of M.I. was partially supported by NSF grant DMS-2204614. The work of V.V. was partially supported by the Collaborative NSF grant DMS-2307681 and by a Simons Investigator Award.

We are grateful to Scott Armstrong and Tuomo Kuusi for comparing the properties of the OpenAI construction recorded in Appendix~\ref{app:scales} against the Lean code released by OpenAI.

\section*{Disclaimer on the use of AI systems} 
Publicly available versions of Claude (Opus 5 and Fable 5.1, within Claude Code) and ChatGPT (Sol and Astra, within Codex) were used in the preparation of this manuscript. By far the heaviest use of these AI systems was in deciphering the statements of the OpenAI paper~\cite{OpenAIManuscript}. Claude and ChatGPT were used to analyze that manuscript, and over the course of several interactive sessions the main properties of the solutions in the OpenAI construction were extracted; see Appendix~\ref{app:scales}. These properties were subsequently cross-checked against the publicly available PDF version of~\cite{OpenAIManuscript} and the Lean code released by OpenAI. The AI systems were given access to our paper~\cite{CIV26} and to a nearly completed version of the paper~\cite{CIVEulerLength}, from which the core idea of this proof is taken. The AI systems were also provided with an early draft of this manuscript, written entirely by the authors, which contained the entire proof of Theorem~\ref{thm:aniso:main} in the form of a six-page sketch, and a half-page explanation of how the analyticity reduction to the axisymmetric case should work. Claude and ChatGPT were used to turn this early draft into a manuscript which contained complete proofs of all statements. The AI systems also produced the figure and searched the relevant bibliography. Over the course of a few interactive sessions of Claude Code, the manuscript was then rewritten under the close guidance and supervision of the authors; this included not just the polishing of the exposition, but also the verification of all mathematical computations. Every suggestion, computation, and reference produced by the AI systems was independently verified and, where necessary, corrected or reformulated by the authors, who take full responsibility for the manuscript as it is presently written.

\end{document}